\documentclass[11pt]{article}
\usepackage{setspace}
\usepackage{mathrsfs}
\usepackage{mathrsfs}
\usepackage{mathrsfs}
\usepackage{graphicx}
\usepackage{epstopdf}
\usepackage{multirow}
\usepackage{amsmath}
\usepackage{cite}
\usepackage[utf8]{inputenc}
\renewcommand {\baselinestretch} {1.3}

\usepackage{amsfonts}
\usepackage{amssymb}
\usepackage{amsmath}
\usepackage{amsthm}
\usepackage[colorlinks=true]{hyperref}
\usepackage{hyperref}
\hypersetup{linkcolor=blue}
\newtheorem{thm}{Theorem}[section]

\newtheorem{ass}[thm]{Assumption}

\newtheorem{coro}[thm]{Corollary}

\newtheorem{defn}[thm]{Definition}

\newtheorem{lem}[thm]{Lemma}

\newtheorem{prop}[thm]{Proposition}
\newtheorem{remk}[thm]{Remark}

\numberwithin{equation}{section}

\newcommand{\epsi}{\varepsilon}
\begin{document}
\begin{spacing}{0.5}
\end{spacing}
\title{{\Large\bf {{Basic Reproduction Ratios for Almost Periodic Non-Densely Defined Cauchy Problems}}}
\thanks{
This work is supported by the Beijing Natural Science Foundation (No. 1254047) and the National Natural Science Foundation of China (No. 12171039).}}
\author{{\normalsize Jiawei Huo$^{(a),}${\thanks{Corresponding author:  jiaweihuo@buct.edu.cn  (J. Huo)}} \,, \, Rong Yuan$^{(b)}$,  \,\,\,\,}\\}
\date{$^{(a)}$ College of Mathematics and Physics, Beijing University of Chemical Technology,  Beijing, 100029, People’s Republic of China.\\
$^{(b)}$ School of Mathematical Sciences, Beijing Normal University, Beijing, 100875, \\People’s Republic of China.\\}
\maketitle
\begin{minipage}{14cm} {\bf Abstract:} The theory of the basic reproduction ratio $R_0$ is established for the almost periodic non-densely defined Cauchy problems. Firstly, we present some dynamical properties of the linear almost periodic non-densely defined Cauchy problems. Then we define the evolution semigroups on the space of almost periodic functions, investigate their infinitesimal generators and define the basic reproduction ratio $R_0$. Finally, our developed theory is applied to age-structured models and functional differential systems.

\noindent{\it Keywords:}  Almost periodicity; non-densely defined Cauchy problems; evolution semigroups; basic reproduction ratios. \\
\noindent{\it AMS Subject Classification:} 34K20, 37B55, 92D25
\end{minipage}
 \maketitle
\numberwithin{equation}{section}
\newtheorem{theorem}{Theorem}[section]
\newtheorem{lemma}[theorem]{Lemma}
\newtheorem{proposition}[theorem]{Proposition}
\newtheorem{corollary}[theorem]{Corollary}
\newtheorem{assumption}[theorem]{Assumption}

\section{Introduction}
\indent\indent The basic reproduction ratio $R_0$, as an important concept, is widely used in population dynamics and epidemiology. In population dynamics, $R_0$ is the total number of expected offspring an average individual would produce during its lifetime. In epidemiology, $R_0$ is the expected number of secondary cases produced {in a population of completely susceptible individuals. Usually, when $R_0$ is less than one, the population or disease eventually becomes extinct, whereas persistence occurs when $R_0$ is greater than one.}

The theory of the basic reproduction ratio $R_0$ in different environments has been established by many scholars. In 1990, Diekmann, Heesterbeek and Metz studied an epidemic model with a heterogeneous population and introduced a next generation operator method to define the basic reproduction number $R_0$ \cite{Diekmann90}, and then van den Driessche and Watmough extended the definition of $R_0$ to the autonomous compartmental epidemic models \cite{van}. For non-autonomous models, Wang and Zhao defined the basic reproduction ratio and studied the threshold dynamics of the periodic compartmental models \cite{Wang08}. Later, Thieme investigated the relation between spectral bound and reproduction ratios and extended $R_0$ to models with infinite-dimensional state spaces and time heterogeneity \cite{Thieme09}. If the models are considered in an almost periodic situation, Wang and Zhao obtained similar results by evolution semigroups \cite{Wang13}. Until now, the basic reproduction ratio $R_0$ is always defined as the spectral radius of the next generation operator $FV^{-1}$ (i.e. $R_0:=r(FV^{-1})$) and has been applied in various autonomous and non-autonomous functional differential systems\cite{Zhao17,Qiang20}  reaction-diffusion systems\cite{Wang12,Zhang21,Wang23} and nonlocal diffusion systems\cite{Lin,Feng,Yang}.

{ However, in many autonomous age-dependent population and epidemic models, the next generation operator is typically represented as an integral operator, and the basic reproduction ratio $R_0$ is defined as its spectral radius.} For periodic age-dependent models, the next generation operator is also obtained by Bacaër and Guernaoui\cite{Bacaër} and Inaba \cite{Inaba19}. It is well-known that the age-structured models can be reformulated as a Cauchy problem in a non-dense domain. This provides another method to define the basic reproduction ratios for the age-structured models and other models that can be transferred to a non-densely defined Cauchy problem. Thieme studied an age-structured population model with spatial diffusion and defined the basic reproduction ratio $R_0=r(-F\mathcal B^{-1})$, where $F,\mathcal B$ are non-densely defined operators\cite{Thieme09}. Subsequently, Huo, Huo and Yuan extended it to n-dimensional cooperative age-structured epidemic models with spatial diffusion and degenerate diffusion \cite{Huo23}. Recently, Djidjou-Demasse, Goudiaby and Seydi proposed a general framework to define $R_0$ for the periodic non-densely defined Cauchy problems and applied their results to several biological models \cite{Djidjou-Demasse}.

It is well known that seasonality due to changing weather conditions is a common external environmental factor that affects population dynamics and annual trends in the spread of infectious diseases\cite{Liu17}. In periodic population and epidemic models, all the involved coefficients are assumed to be periodic and share a common period. However, as noted in \cite{Altizer}, seasonal changes are cyclic, largely predictable and arguably represent the strongest and most ubiquitous source of external variation influencing human and natural systems. Thus, the death rate, recovery rate and birth rate are more reasonable to have complex oscillation. Even if fluctuations in nature are periodic, they also do not always share a common period. In particular, if the periods of these periodic coefficients have no common integer multiple, then the model is not a periodic system. Almost periodic functions, a generalization of periodic functions, are more suitable to characterize the fluctuations in nature\cite{Bezandry,Diagana}. Our purpose in this article is to give a general framework for defining the basic reproduction ratio $R_0$ for the almost periodic non-densely defined Cauchy problems.

To the best of our knowledge, almost periodic non-densely defined Cauchy problems have been studied by many scholars in the past \cite{Cuevas,Arendt99,Amir,Afoukal,Boulite}. However, most works focused on the existence of almost periodic solutions, with little attention being paid to the dynamical behaviors of the non-densely defined Cauchy problems, especially the threshold properties. In this paper, we define the basic reproduction ratio $R_0$ for the almost periodic non-densely defined Cauchy problems by the spectral radius of an integral operator, i.e., $R_0:=r(\mathscr L)$. Moreover, we show that $r(\mathscr L)$ is equal to $r(-F\mathcal B^{-1})$, where $F$ and $\mathcal B$ are non-densely defined operators.  This implies that the basic reproduction ratio $R_0$ of the non-densely defined Cauchy problems can be defined in the same way as ODE, reaction-diffusion systems and nonlocal diffusion systems. Furthermore, we prove that $R_0-1$ has the same sign as the exponential growth bound of the linear non-densely defined Cauchy problem, which is the core property of the basic reproduction ratio in many models \cite{Wang13,Wang12,Qiang20,Thieme09,Zhao17}. It is worth noting that age-structured models, functional differential systems and some parabolic equations can be formulated as a non-densely defined Cauchy problem. Thus, the proposed approach can be applied to the corresponding models in the almost periodic situation and define their basic reproduction numbers. 

The rest of this paper is organized as follows. In Section 2, we present some dynamical properties of the almost periodic non-densely defined Cauchy problems. In Section 3, we define the evolution semigroups on the space of almost periodic functions and investigate their infinitesimal generators. Moreover, we define the basic reproduction ratio $R_0$ and prove that $R_0-1$ has the same sign as the exponential growth bound of the corresponding linear non-densely defined Cauchy problem. In Section 4, our developed theory is applied to age-structured models and functional differential systems.

\section{Linear almost periodic non-densely defined Cauchy problems}
\indent\indent First, we give some definitions of the almost periodic function and the Hille-Yosida operator.
\begin{defn}\label{DE2.1}\cite{Corduneanu,Fink}
Let $(X,d)$ be a metric space. A function $f\in C(\mathbb R,X)$ is said to be \textbf{almost periodic} if for any $\epsi>0$, the set
\begin{equation}\label{2.1}
\mathcal T(f,\epsi)=\{s\in\mathbb R:d(f(t+s),f(t))< \epsi ,\forall t\in\mathbb R\}
\end{equation}
is a relatively dense subset of $\mathbb R$, that is, there is a positive constant $l>0$ (inclusion length) such that $[c,c+l]\bigcap \mathcal T(f,\epsi)\neq \emptyset, \forall c\in\mathbb R$. 
\end{defn}

We define the space of almost periodic functions by
$$
AP(\mathbb R,X):=\{f\in C(\mathbb R,X): f \text{ is an almost periodic function}\}.
$$
Then $AP(\mathbb R,X)$ is a Banach space equipped with supremum norm $\|\cdot\|$.

{\begin{defn}\label{DE2.2}
Let $X$ be a Banach space and $A:D(A)\subset X\to X$ be a linear operator. We say that $(A,D(A))$ is a \textbf{Hille-Yosida operator} on $X$ if there exist $\omega_A\in\mathbb R$ and a positive constant $M_A\ge 1$ such that $(\omega_A,+\infty)\subset \rho(A)$ ($\rho(A)$ is the resolvent set of $A$) and 
\begin{equation}\label{2.2}
\|(\lambda I-A)^{-n}\| \leq \frac{M_A}{(\lambda-\omega_A)^n}, \quad \forall\ \lambda>\omega_A, n\in\mathbb N^+.
\end{equation}
\end{defn}
}

{ In the mathematical study of biological systems, many population and epidemic models can be formulated as abstract Cauchy problems with non-dense domains, such as age-structured models and functional differential equations. In such models, the operator $A$ typically describes the intrinsic evolution of individuals, such as aging and spatial diffusion. In addition, $F(t)$ represents the production of new individuals or new infections and $V(t)$ describes mortality, recovery or other transition processes.}  Motivated by these applications, we study the basic reproduction ratio $R_0$ for the following linear almost periodic non-densely defined Cauchy problem
\begin{equation}\label{2.3}
\left\{ {\begin{array}{*{20}{l}}
\frac{du(t)}{dt}=Au(t)+F(t)u(t)-V(t)u(t), \quad t>t_0,\\
u(t_0)=u_0\in X_0=\overline{D(A)}.
\end{array}} \right.
\end{equation}
where $A:D(A)\subset X\to X$ is a Hille-Yosida linear operator (possibly unbounded, non-densely defined) on Banach space $(X,\|\cdot\|)$. 

Let us introduce the part $A_0$ of $A$ in $X_0=\overline{D(A)}$:
\begin{equation}\label{2.4}
A_0=A \text{ on } D(A_0)=\{\psi\in D(A), A \psi\in X_0   \}.
\end{equation}
Since $A$ is a Hille-Yosida operator on $X$, it follows from \cite[Lemmas 2.1 and 2.2]{Magal09MAMS} that $\rho(A)=\rho(A_0)$, $\overline{D(A)}=\overline{D(A_0)}$ and $A_0$ is a Hille-Yosida linear operator with dense domain $X_0$. By Hille-Yosida theorem, $A_0$ generates a $C_0-$semigroup $\{T_{A_0}(t)\}_{t\ge0}\subset \mathcal L(X_0)$ and satisfies the following estimate
\begin{equation}\label{2.5}
\|T_{A_0}(t)\|_{\mathcal L(X_0)}\le M_A e^{\omega_A t}, \quad t\ge 0,
\end{equation}
where $M_A$ and $\omega_A$ are given in Definition \ref{DE2.2}. Throughout this paper, unless otherwise stated, we assume that the Banach spaces $X$ and $X_0$ have normal and generating positive cones
$X_+$ and $X_0^+$ respectively. In addition, the maps $t\in \mathbb R\mapsto F(t)\in \mathcal L(X_0,X)$, $t\in \mathbb R\mapsto V(t)\in \mathcal L(X_0,X)$ are almost periodic and continuous in the operator norm topology. For the non-densely defined Cauchy problem \eqref{2.3}, we make the following assumptions.

\begin{ass}\label{ASS2.3}
For the non-densely defined Cauchy problem \eqref{2.3}, we assume that
\begin{itemize}
		\item[{\rm (i)}] $A:D(A)\subset X_0\to X$ is resolvent positive, i.e., the resolvent set of $A$, $\rho(A)$, contains a ray $(\omega,+\infty)$ and $(\lambda I-A)^{-1}$ is a positive operator for all $\lambda> \omega$.
\item[{\rm (ii)}] There exists $\lambda_1>\omega_A$ such that $\lambda_1 \phi-V(t)\phi\in X_+$ for all $\phi\in X_{0}^+$ and $t\in \mathbb R$.
\item[{\rm (iii)}] For each $t\in \mathbb R$ and each $\phi\in X_{0}^+$, we have $F(t)\phi\in X_+$.
\end{itemize}
\end{ass}

Note that almost periodic functions are bounded and uniformly continuous \cite[Theorem 1.13]{Fink}, it follows from \cite[Proposition 4.1]{Magal09ADE} that the non-densely defined Cauchy problem \eqref{2.3} admits an exponentially bounded evolution family $\{U(t,s)\}_{t\ge s}$ on space $X_0$. This implies that the solution $u(t)$ of the Cauchy problem \eqref{2.3} satisfies
\begin{equation}\label{2.6}
u(t)=U(t,t_0)u_0,\quad t\ge t_0.
\end{equation}
Moreover, the exponential growth bound $\omega(U)$ of the evolution family $U(t,s)$ is defined by
$$
\omega (U) = \inf \{ \hat{\omega} :\exists M \ge 1:\forall s \in \mathbb{R},t \ge s:\left\| {U(t,s)} \right\| \le M{e^{\hat{\omega} (t - s)}}\}.$$

Next, we consider another non-densely defined Cauchy problem
\begin{equation}\label{2.7}
\left\{ {\begin{array}{*{20}{l}}
\frac{dv(t)}{dt}=Av(t)-V(t)v(t), \quad t>t_0,\\
v(t_0)=v_0\in X_0=\overline{D(A)},
\end{array}} \right.
\end{equation}
where $A$ and $V(t)$ satisfy the same assumptions as \eqref{2.3}. By a similar method, we know that the Cauchy problem \eqref{2.7} admits an exponentially bounded evolution family $\{\Gamma(t,s)\}_{t\ge s}$ on space $X_0$. { Since $A$ is a Hille-Yosida operator and $V(t)$ is bounded and continuous, for the solution $v(t)$ of the Cauchy problem \eqref{2.7}, the map $f(t):=-V(t)v(t)$ is continuous. Applying \cite[Theorem 1.5]{Magal21} to the nonhomogeneous problem
$$
\frac{dv(t)}{dt}=Av(t)+f(t)
$$
with $f(t)=-V(t)v(t)$, we obtain the following variations of constants formula 
}
\begin{equation}\label{2.8}
v(t)=\Gamma(t,t_0)v_0=T_{A_0}(t-t_0)v_0-\!\!\lim_{\lambda\to +\infty}\int_{t_0}^t T_{A_0}(t-s)\lambda(\lambda I-A)^{-1} V(s)\Gamma(s,t_0)v_0ds,\forall v_0\in X_0,
\end{equation}
where the limit exists in $X_0$. Moreover, the convergence in \eqref{2.8} is uniform with respect to $t,t_0\in I$ for each compact interval $I\subset \mathbb R$.

In the following, we add another assumption on the evolution family $\{\Gamma(t,s)\}_{t\ge s}$.
\begin{ass}\label{ASS2.4}
The evolution family $\{\Gamma(t,s)\}_{t\ge s}$ satisfies
$$
\omega(\Gamma)<0,
$$
where $\omega(\cdot)$ represents the exponential growth bound.
\end{ass}
\begin{remk}\label{RE2.5}
Assumption \ref{ASS2.4} implies that evolution family $\{\Gamma(t,s)\}_{t\ge s}$ is exponentially stable. In fact, this assumption is classical, common and crucial in epidemic models, for example, see \cite[Assumption (A2)]{Qiang},\cite[Theorem 3.17]{Thieme09}, \cite[Assumption (A7)]{Wang13}.
\end{remk}

\begin{thm}\label{TH2.6}
Let Assumption \ref{ASS2.3} be satisfied. The mild solution $u(t)\in C([t_0,\infty),X_0)$ of the Cauchy problem \eqref{2.3} has the following form
\begin{equation}\label{2.9}
u(t)=U(t,t_0)u_0=\Gamma(t,t_0)u_0+\lim_{\lambda\to +\infty}\int_{t_0}^t \Gamma(t,s)\lambda (\lambda I-A)^{-1}F(s)U(s,t_0)u_0ds, \quad \forall u_0\in X_0,
\end{equation}
where the limit exists in $X_0$. Moreover, the convergence in \eqref{2.9} is uniform with respect to $t,t_0\in I$ for each compact interval $I\subset \mathbb R$. Furthermore, $u(t)$ satisfies the following properties
\begin{itemize}
\item[{\rm (i)}] $u(t)=U(t,t_0)u_0$ is a continuous function of $(t,t_0,u_0)$.
\item[{\rm (ii)}] $u(t)$ depends continuously on functions $F(t)$ and $V(t)$.
\end{itemize}
\end{thm}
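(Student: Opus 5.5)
Starting from the variation of constants formula for the full problem \eqref{2.3} with respect to $A$, I will derive a second variation of constants formula, now with respect to the evolution family $\Gamma$. By \cite[Proposition 4.1]{Magal09ADE} and \cite[Theorem 1.5]{Magal21}, applied with the bounded perturbation $F(t)-V(t)$,
\[
u(t)=T_{A_0}(t-t_0)u_0+\lim_{\lambda\to+\infty}\int_{t_0}^t T_{A_0}(t-s)\lambda(\lambda I-A)^{-1}\bigl(F(s)-V(s)\bigr)u(s)\,ds .
\]
Define $w(t):=\Gamma(t,t_0)u_0+\lim_{\lambda\to+\infty}\int_{t_0}^t\Gamma(t,s)\lambda(\lambda I-A)^{-1}F(s)u(s)\,ds$. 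The goal is to show $w=u$.

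The first step is to show that the limit defining $w$ exists, uniformly for $t,t_0$ in compact sets. Write $h(s):=F(s)u(s)$, which lies in $C([t_0,\infty),X)$. By \eqref{2.8}, the problem $v'=Av-V(t)v+h(t)$ is a bounded perturbation of the inhomogeneous Hille--Yosida problem. Its integrated solution is therefore the unique fixed point of
\[
v(t)=T_{A_0}(t-t_0)u_0+\lim_{\lambda\to+\infty}\int_{t_0}^tT_{A_0}(t-s)\lambda(\lambda I-A)^{-1}\bigl(h(s)-V(s)v(s)\bigr)ds .
\]
I then need the Duhamel-type identity $v(t)=\Gamma(t,t_0)u_0+\lim_{\lambda}\int_{t_0}^t\Gamma(t,s)\lambda(\lambda I-A)^{-1}h(s)\,ds$.

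To prove it, set $(S_Ah)(t):=\lim_\lambda\int_{t_0}^tT_{A_0}(t-s)\lambda(\lambda I-A)^{-1}h(s)\,ds$. The Hille--Yosida estimate gives $\|(S_Ah)(t)\|\le M_A\int_{t_0}^te^{\omega_A(t-s)}\|h(s)\|\,ds$, as in \cite{Magal09MAMS}. Using this bound, I will check that the $\lambda$-approximants $v_\lambda(t):=\Gamma(t,t_0)u_0+\int_{t_0}^t\Gamma(t,s)\lambda(\lambda I-A)^{-1}h(s)\,ds$ are Cauchy uniformly on compacts. The plan is to use $\Gamma(t,s)x=T_{A_0}(t-s)x-(S_A V\Gamma(\cdot,s)x)(t)$ from \eqref{2.8} and apply Fubini to the resulting double integral. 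This step is the main obstacle: exchanging $\lim_\lambda$ with the integral of the second-stage operator requires uniform-in-$\lambda$ bounds on $\lambda(\lambda I-A)^{-1}$ applied to non-dense-valued functions. Those bounds come from the Hille--Yosida estimate via \cite[Lemma 2.1]{Magal09MAMS}.

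With the identity in hand, uniqueness of integrated solutions implies $u=v$ when $h=Fu$, which gives \eqref{2.9}. Uniformity of the convergence on compact $t,t_0$ sets follows from the uniform estimates just obtained, together with the exponential bounds $\|\Gamma(t,s)\|,\|U(t,s)\|\le Me^{\hat\omega(t-s)}$.

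For (i), I will combine the strong continuity of $(t,s)\mapsto U(t,s)$ with the uniform bound $\|U(t,s)\|\le Me^{\hat\omega(t-s)}$. The estimate
\[
\|U(t,t_0)u_0-U(t',t_0')u_0'\|\le\|U(t,t_0)\|\|u_0-u_0'\|+\|U(t,t_0)u_0'-U(t',t_0')u_0'\|
\]
then yields joint continuity.

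For (ii), I will take $F_n\to F$ and $V_n\to V$ uniformly on compacts and subtract the fixed-point equations. This gives
\[
\|u_n(t)-u(t)\|\le C\int_{t_0}^t\Bigl(\|F_n-F\|+\|V_n-V\|\Bigr)\|u(s)\|\,ds+C\int_{t_0}^t\bigl(\|F_n\|+\|V_n\|\bigr)\|u_n(s)-u(s)\|\,ds .
\]
Gronwall's inequality then shows $u_n\to u$ uniformly on compact intervals.
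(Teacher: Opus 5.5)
Your proposal is correct, and it follows a genuinely different, more self-contained route from the paper's for \eqref{2.9} and (i).

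\textbf{Part (ii).} This is the paper's argument: subtract the two solution formulas, apply the Hille--Yosida estimate $\|(S_Af)(t)\|\le M_A\int_{t_0}^t e^{\omega_A(t-s)}\|f(s)\|\,ds$, and close with Gronwall. Your splitting $(F_n-V_n)(u_n-u)+\bigl((F_n-F)-(V_n-V)\bigr)u$ is slightly cleaner than the paper's. Only the unperturbed $u$ appears in the forcing term, so you need no separate bound on the perturbed solution.

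\textbf{Formula \eqref{2.9} and part (i).} The paper gets \eqref{2.9} in one line from \cite[Theorem 1.6]{Magal21}, the ready-made variation of constants formula relative to the evolution family of a bounded perturbation. It gets (i) by citing \cite[Theorem 3.2]{Thieme90}. You instead reprove the former from \eqref{2.8}. You obtain (i) directly from the strong continuity and exponential bound that are built into the definition of an evolution family. This makes the argument self-contained at the cost of length.

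\textbf{Points to tighten.} First, the step you flag as the main obstacle does not hinge on bounds for $\lambda(\lambda I-A)^{-1}$; those are immediate from \eqref{2.2}. For fixed $\lambda$, $y_\lambda:=\lambda(\lambda I-A)^{-1}h$ is $X_0$-valued. Inserting \eqref{2.8} and applying Fubini, with dominated convergence for the inner limit, gives, for $w_\lambda(t):=\int_{t_0}^t\Gamma(t,s)y_\lambda(s)\,ds$,
\[
w_\lambda(t)=\int_{t_0}^tT_{A_0}(t-s)y_\lambda(s)\,ds-\bigl(S_A(Vw_\lambda)\bigr)(t).
\]
Compare this with the fixed point $z=S_Ah-S_A(Vz)$ using the $S_A$-estimate and Gronwall. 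You get $w_\lambda\to z$ uniformly on compacts, which proves convergence and identifies the limit at once.

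Second, uniformity in $t_0\in I$ needs a sentence, because the forcing $h_{t_0}=F(\cdot)U(\cdot,t_0)u_0$ moves with $t_0$. After shifting to a common interval, the family $\{h_{t_0}\}_{t_0\in I}$ is the continuous image of the compact set $I$ by (i), hence compact. The maps $f\mapsto\int T_{A_0}(\cdot-s)\lambda(\lambda I-A)^{-1}f(s)\,ds$ are bounded uniformly in $\lambda$ and converge pointwise. They therefore converge uniformly on compact sets, which gives the required uniformity.
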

\begin{proof}
Using \cite[Theorem 1.6]{Magal21} and the evolution family $\{\Gamma(t,s)\}_{t\ge s}$, we can easily obtain formula \eqref{2.9}. In addition, by \cite[Theorem 3.2]{Thieme90}, it is clear that property (i) holds. Finally, we show property (ii). We consider the following non-densely defined Cauchy problem
\begin{equation}\label{2.10}
\left\{ {\begin{array}{*{20}{l}}
\frac{dw(t)}{dt}=Aw(t)+\mathcal F(t)w(t)-\mathcal V(t)w(t), \quad t>t_0,\\
w(t_0)=u_0.
\end{array}} \right.
\end{equation}
where $\mathcal F(t)$ and $\mathcal V(t)$ admit the same assumptions as almost periodic functions $F(t)$ and $V(t)$. Based on the definition of the integrated solution of the non-densely defined Cauchy problems\cite[Definition 1.2]{Thieme90}, the integrated solutions of \eqref{2.3} and \eqref{2.10} have the following form
$$
u(t)=u_0+A\int_{t_0}^t u(s)ds+\int_{t_0}^t (F(s)-V(s))u(s)ds,
$$
and
$$
w(t)=u_0+A\int_{t_0}^t w(s)ds+\int_{t_0}^t (\mathcal F(s)-\mathcal V(s))w(s)ds.
$$
Then we obtain
$$
u(t)-w(t)=A\int_{t_0}^t(u(s)-w(s))ds+\int_{t_0}^t (F(s)-V(s))u(s)-({\mathcal F}(s)- {\mathcal V}(s))w(s)ds.
$$
Due to \cite{Kellermann} or \cite[Theorem 2.1]{Huo25}, we have the following estimate
$$
\|u(t)-w(t)\|\le  M_{A} \int_{t_0}^t e^{\omega_{A}(t-s)}\|(F(s)-V(s))u(s)-({\mathcal F}(s)-{\mathcal V}(s))w(s)\|ds.
$$
and hence,
\begin{align*}
    \|u(t)-w(t)\|&\le  M_{A} \int_{t_0}^t  e^{\omega_{A}(t-s)}\| (F(s)-{\mathcal F}(s))u(s)\|ds+ M_{A}\int_{t_0}^t  e^{\omega_{A}(t-s)}\|{\mathcal F}(s)(u(s)-w(s))\|ds\\
&\quad + M_{A} \int_{t_0}^t  e^{\omega_{A}(t-s)}\| ({\mathcal V}(s)-V(s))w(s)\|ds+ M_{A}\int_{t_0}^t  e^{\omega_{A}(t-s)}\|{V}(s)(w(s)-u(s))\|ds.
\end{align*}
Applying $e^{-\omega_A(t-t_0)}$ on both sides of the above equation, we obtain
\begin{align*}
&\quad e^{-\omega_A(t-t_0)}\|u(t)-w(t)\| = \|e^{-\omega_A(t-t_0)}u(t)-e^{-\omega_A(t-t_0)}w(t)\| \\
&\le  M_{A} \int_{t_0}^t  e^{-\omega_{A}(s-t_0)}\| (F(s)-{\mathcal F}(s))u(s)\|ds+ M_{A}\int_{t_0}^t  e^{-\omega_{A}(s-t_0)}\|{\mathcal F}(s)(u(s)-w(s))\|ds\\
&\quad + M_{A} \int_{t_0}^t  e^{-\omega_{A}(s-t_0)}\| ({\mathcal V}(s)-V(s))w(s)\|ds+ M_{A}\int_{t_0}^t  e^{-\omega_{A}(s-t_0)}\|{ V}(s)(w(s)-u(s))\|ds.
\end{align*}
By setting $\overline u(t)=e^{-\omega_A(t-t_0)}u(t)$ and  $\overline w(t)=e^{-\omega_A(t-t_0)}w(t)$, then we have
\begin{align*}
\|\overline u(t)-\overline w(t)\|&\le  M_{A} \int_{t_0}^t \| (F(s)-{\mathcal F}(s))\overline u(s)\|ds+ M_{A}\int_{t_0}^t  \|{\mathcal F}(s)(\overline u(s)-\overline w(s))\|ds\\
&\quad + M_{A} \int_{t_0}^t \| ({\mathcal V}(s)-V(s))\overline w(s)\|ds+ M_{A}\int_{t_0}^t \|{V}(s)(\overline w(s)-\overline u(s))\|ds.
\end{align*}
Recall that almost periodic functions are bounded, there exists a constant $K_*>0$ such that $\|\mathcal F(t)\|\le K_*$ and $\|V(t)\|\le K_*$. Thus, we have
$$
\begin{array}{ll}
\|\overline u(t)-\overline w(t)\|&\le  M_{A} \int_{t_0}^t \| (F(s)-{\mathcal F}(s))\overline u(s)\|ds+ M_{A}K_*\int_{t_0}^t  \|(\overline u(s)-\overline w(s))\|ds\\
&\quad + M_{A} \int_{t_0}^t \| ({\mathcal V}(s)-V(s))\overline w(s)\|ds+ M_{A}K_*\int_{t_0}^t \|(\overline w(s)-\overline u(s))\|ds.
\end{array}
$$
Applying a generalized Gronwall's inequality (\cite[Lemma 1.3.1]{Hale}), we have
$$
\|\overline u(t)-\overline w(t)\|\le \left(  M_{A} \int_{t_0}^t \| ( F(s)-{\mathcal F}(s))\overline u(s)\|ds+M_{A} \int_{t_0}^t \| ({\mathcal V}(s)- V(s))\overline w(s)\|ds \right)e^{M_{A}K_*(t-t_0)}.
$$
Furthermore, we obtain
$$
\|u(t)-w(t)\|\le \left(  M_{A} \int_{t_0}^t \| ( F(s)-{\mathcal F}(s))\overline u(s)\|ds+M_{A} \int_{t_0}^t \| ({\mathcal V}(s)- V(s))\overline w(s)\|ds \right)e^{(M_{A}K_*+\omega_A)(t-t_0)}.
$$
\end{proof}

\begin{remk}
In fact, \eqref{2.9} is the variation of constants formula of the non-densely defined evolution equations. It is worth mentioning that Theorem \ref{TH2.6} does not need Assumption \ref{ASS2.4}. Moreover, Theorem \ref{TH2.6} still holds when $F(t)$ and $V(t)$ are only bounded and uniformly continuous functions, instead of almost periodic functions.
\end{remk}

\section{Basic reproduction ratios}
\indent\indent In this section, we define two evolution semigroups on the space of almost periodic functions. Moreover, we investigate their infinitesimal generators. Furthermore, the basic reproduction ratio $R_0$ is defined for the almost periodic non-densely defined Cauchy problems.
\subsection{Evolution semigroups on the space of almost periodic functions}
\indent\indent By the evolution families $\{U(t,s)\}_{t\ge s}$ and $\{\Gamma(t,s)\}_{t\ge s}$, we define two one-parameter families $\{\mathbb U(t)\}_{t\ge0}$ and  $\{\mathbb V(t)\}_{t\ge0}$ as follows
\begin{equation}\label{3.1}
[\mathbb U(t)\phi](s)=U(s,s-t)\phi(s-t),\quad \forall\phi\in AP(\mathbb R,X_0)
\end{equation}
and 
\begin{equation}\label{3.2}
[\mathbb V(t)\phi](s)=\Gamma(s,s-t)\phi(s-t),\quad \forall\phi\in AP(\mathbb R,X_0).
\end{equation}

In the following, we investigate some properties of the one-parameter family $\{\mathbb U(t)\}_{t\ge0}$.
\begin{lem}\label{LE3.1}
Let Assumption \ref{ASS2.3} be satisfied. Then for any $\phi \in AP(\mathbb R,X_0)$, $[\mathbb U(t)\phi](s)$ is continuous in $s$.
\end{lem}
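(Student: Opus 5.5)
Fix $t\ge 0$ and $\phi\in AP(\mathbb R,X_0)$; we show that $s\mapsto U(s,s-t)\phi(s-t)$ is continuous at an arbitrary $s_0\in\mathbb R$. The plan is to split the increment into two terms, one controlled by the exponential bound on $U$ and the continuity of $\phi$, the other by the joint continuity in Theorem \ref{TH2.6}(i). For $s$ near $s_0$ write
\begin{align*}
U(s,s-t)\phi(s-t)-U(s_0,s_0-t)\phi(s_0-t)&=U(s,s-t)\bigl(\phi(s-t)-\phi(s_0-t)\bigr)\\
&\quad+\bigl(U(s,s-t)-U(s_0,s_0-t)\bigr)\phi(s_0-t).
\end{align*}

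For the first term we use that $\{U(t,s)\}_{t\ge s}$ is exponentially bounded: by \cite[Proposition 4.1]{Magal09ADE} there are $M\ge1$ and $\hat\omega\in\mathbb R$ with $\|U(s,s-t)\|\le Me^{\hat\omega t}$ for all $s$. This bound is uniform in $s$ because $t$ is fixed. Since $\phi$ is continuous, the first term is at most $Me^{\hat\omega t}\|\phi(s-t)-\phi(s_0-t)\|\to 0$ as $s\to s_0$. Uniform continuity of almost periodic functions would even make this term uniform in $s_0$, although that is not needed here.

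For the second term, apply Theorem \ref{TH2.6}(i) with the fixed initial value $u_0=\phi(s_0-t)\in X_0$. The map $(\tau,\tau_0)\mapsto U(\tau,\tau_0)u_0$ is continuous on $\{\tau\ge\tau_0\}$, and $s\mapsto(s,s-t)$ is a continuous curve inside this set because $t\ge0$. Composing the two shows that $U(s,s-t)u_0\to U(s_0,s_0-t)u_0$ as $s\to s_0$. The case $t=0$ is trivial, since $U(s,s)=I$ and the map reduces to $\phi$.

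The main subtlety is that Theorem \ref{TH2.6}(i) gives only strong continuity, not continuity in operator norm. This is exactly why the decomposition places the varying vector next to the uniform operator bound and the fixed vector next to the strongly continuous family. Making the exponential bound uniform in $s$ is the other point to check; it follows from the boundedness of $F$ and $V$, which holds because they are almost periodic.
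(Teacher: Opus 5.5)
Your proof is correct. It differs from the paper's in exactly the step you flag as the subtlety.

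The paper splits the increment the other way round:
\[
\|U(s+s_0,s+s_0-t)-U(s,s-t)\|\,\|\phi(s+s_0-t)\|+\|U(s,s-t)\|\,\|\phi(s+s_0-t)-\phi(s-t)\|.
\]
This pairs the operator difference with the moving vector. The paper therefore needs the operator-norm statement $\lim_{s_0\to0}\|U(t+s_0,s+s_0)-U(t,s)\|=0$, which it says ``follows'' from the joint continuity in Theorem \ref{TH2.6}(i). That inference is not valid in general, since strong continuity does not give norm continuity. Your mirror-image split avoids the issue: the operator difference acts on the fixed vector $\phi(s_0-t)$, so strong continuity suffices, and the moving vector sits next to the uniform exponential bound.

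The paper's norm claim does happen to be true in this setting, but for a different reason. $U(\cdot+h,\cdot+h)$ is the evolution family of the shifted coefficients $F(\cdot+h),V(\cdot+h)$. The Gronwall estimate in the proof of Theorem \ref{TH2.6}(ii) is uniform over $\|u_0\|\le 1$. So uniform continuity of $F,V$ in operator norm gives $\sup_s\|U(s+h,s+h-t)-U(s,s-t)\|\to0$ as $h\to0$.

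Justified that way, the paper's route gives continuity in $s$ uniformly, which is stronger than the lemma states. Yours delivers exactly the pointwise statement from the weakest input. Since Theorem \ref{TH2.6}(i) already asserts joint continuity in $(t,t_0,u_0)$, the lemma is really just composition with $s\mapsto(s,s-t,\phi(s-t))$. Your decomposition re-derives that joint continuity from continuity in the time variables plus a local bound, so it stays valid even if one relies only on the latter.
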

\begin{proof} Note that for any $\psi\in X_0$ and $t\ge s$, $U(t,s)\psi$ is a continuous function of $(t,s,\psi)$ (see Theorem \ref{TH2.6} (i)).  This implies that $\lim_{s_0\to 0}\|U(t+s_0,s+s_0)-U(t,s)\|=0$. In addition, for any $t\ge s$ and $\psi_1,\psi_2\in X_0$, we obtain that $\|U(t,s)\psi_1-U(t,s)\psi_2\|\to 0$, as $\|\psi_1-\psi_2\|\to 0$.

For any $s_0\in \mathbb R$, we have 
\begin{align*}
\|[\mathbb U(t)\phi](s_0+s)-[\mathbb U(t)\phi](s)\|&=\|U(s+s_0,s+s_0-t)\phi(s+s_0-t)-U(s,s-t)\phi(s-t)\|\\
&\le \|U(s+s_0,s+s_0-t)\phi(s+s_0-t)-U(s,s-t)\phi(s+s_0-t)\|\\
& \quad +\|U(s,s-t)\phi(s+s_0-t)-U(s,s-t)\phi(s-t)\|\\
&\le \|U(s+s_0,s+s_0-t)-U(s,s-t)\|\|\phi(s+s_0-t)\|\\
&\quad +\|U(s,s-t)\|\|\phi(s+s_0-t)-\phi(s-t)\|
\end{align*}
Based on the above, we can deduce
$\lim_{s_0\to0}\|[\mathbb U(t)\phi](s_0+s)-[\mathbb U(t)\phi](s)\|= 0.$
\end{proof}

\begin{lem}\label{LE3.2}
Let Assumption \ref{ASS2.3} be satisfied. Then $\{\mathbb U(t)\}_{t\ge0}$ is a semigroup on $AP(\mathbb R,X_0)$.
\end{lem}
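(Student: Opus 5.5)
To prove Lemma \ref{LE3.2}, I will check three things. First, each $\mathbb U(t)$ maps $AP(\mathbb R,X_0)$ into itself as a bounded linear operator. Second, $\mathbb U(0)=I$. Third, $\mathbb U(t+\tau)=\mathbb U(t)\mathbb U(\tau)$ for $t,\tau\ge0$.

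Linearity is immediate from the linearity of $U(s,s-t)$. The bound $\|\mathbb U(t)\phi\|\le Me^{\hat\omega t}\|\phi\|$ follows from the exponential boundedness of $\{U(t,s)\}_{t\ge s}$ stated after Assumption \ref{ASS2.3}. The identity $\mathbb U(0)=I$ follows from $U(s,s)=I$.

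For the semigroup law, I use the evolution property $U(t,r)U(r,s)=U(t,s)$ for $t\ge r\ge s$. This gives
\[
[\mathbb U(t)\mathbb U(\tau)\phi](s)=U(s,s-t)\,[\mathbb U(\tau)\phi](s-t)=U(s,s-t)U(s-t,s-t-\tau)\phi(s-t-\tau)=U(s,s-t-\tau)\phi(s-t-\tau),
\]
and the last expression is $[\mathbb U(t+\tau)\phi](s)$.

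The main obstacle is showing that $\mathbb U(t)\phi$ is again almost periodic. Continuity in $s$ is already given by Lemma \ref{LE3.1}. Fix $t\ge0$ and $\epsi>0$. The plan is to find a relatively dense set of common $\epsi$-translation numbers for $F$, $V$ and $\phi$. This is possible because the triple $(F,V,\phi)$, viewed as a single function into a product metric space, is almost periodic, by the standard fact \cite{Fink} that finitely many almost periodic functions share a relatively dense set of common $\epsi$-almost periods.

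Take such a common almost period $\sigma$. Then
\[
\|U(s+\sigma,s+\sigma-t)\phi(s+\sigma-t)-U(s,s-t)\phi(s-t)\|\le \|U(s+\sigma,s+\sigma-t)\|\,\|\phi(s+\sigma-t)-\phi(s-t)\|+\|(U(s+\sigma,s+\sigma-t)-U(s,s-t))\phi(s-t)\|.
\]
The first term is at most $Me^{\hat\omega t}\epsi$.

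For the second term, note that $U(s+\sigma,s+\sigma-r)$ is the evolution family generated by the shifted coefficients $F(\cdot+\sigma)$ and $V(\cdot+\sigma)$. I apply the continuous-dependence estimate from the proof of Theorem \ref{TH2.6}(ii) on the interval $[s-t,s]$ of length $t$, comparing the systems with coefficients $(F,V)$ and $(F(\cdot+\sigma),V(\cdot+\sigma))$. This gives
\[
\|(U(s+\sigma,s+\sigma-t)-U(s,s-t))\psi\|\le C_t\,\epsi\,\|\psi\|,
\]
with $C_t$ depending only on $t$, $M_A$, $\omega_A$, $K_*$ and the exponential bound. It is independent of $s$, because the coefficient bounds and $\sup_r\|F(r+\sigma)-F(r)\|<\epsi$ are uniform in $s$.

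Since $\|\phi\|$ is bounded, $\sigma$ is a $C\epsi$-almost period of $\mathbb U(t)\phi$. This gives almost periodicity. The delicate point is making the Gronwall constant uniform in $s$, which holds since all bounds used are global suprema.
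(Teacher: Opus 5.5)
Your proof is correct and follows essentially the same route as the paper. Both split the difference with the triangle inequality into two terms. One is a coefficient-shift term, controlled by the continuous-dependence estimate from Theorem \ref{TH2.6}(ii). The other is an initial-data term, controlled by the uniform exponential bound on $U$. Both then pick a relatively dense set of common almost periods of $F$, $V$ and $\phi$, and obtain the semigroup law from the evolution property. Your version is slightly more careful, since you use the quantitative Gronwall bound to make the constant explicitly uniform in $s$; the paper leaves this implicit when it invokes Theorem \ref{TH2.6}(i)--(ii) qualitatively.
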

\begin{proof}
Firstly, we prove that $\mathbb U(t)$ maps $AP(\mathbb R,X_0)$ to $AP(\mathbb R,X_0)$. For any $t\ge s$ and $\psi\in X_0$, we define $u(t,s,\psi)$ as the solution of \eqref{2.3} with initial value condition $u(s)=\psi$. Let $\phi\in AP(\mathbb R,X_0)$, thus we need to prove that $u(s,s-t,\phi(s-t))$ is almost periodic in $s$. For any $l\in \mathbb R$, we have
\begin{align}\label{3.3}
&\quad \| u(s+l,s-t+l,\phi(s-t+l))-u(s,s-t,\phi(s-t))\| \notag\\
&\le    \| u(s+l,s-t+l,\phi(s-t+l))-u(s,s-t,\phi(s-t+l))\| \\
&\quad +\| u(s,s-t,\phi(s-t+l))-u(s,s-t,\phi(s-t))\| \notag
\end{align}
Based on the Theorem \ref{TH2.6} (ii), we obtain that for $\epsi>0$ and $s\in \mathbb R$, there exists $\delta>0$ such that $\|F(s+l)-F(s)\|\le \delta$ and $\|V(s+l)-V(s)\|\le \delta$ for all $s\in \mathbb R$, then we have
\begin{equation}\label{3.4}
\| u(s+l,s-t+l,\phi(s-t+l))-u(s,s-t,\phi(s-t+l))\| \le \frac{\epsi}{2}.
\end{equation}
In addition, by Theorem \ref{TH2.6} (i), if $\|\phi(s-t+l)-\phi(s-t)\|\le \delta$ for $s-t\in \mathbb R$, then we have
\begin{equation}\label{3.5}
\| u(s,s-t,\phi(s-t+l))-u(s,s-t,\phi(s-t))\|\le \frac{\epsi}{2}. 
\end{equation}
Note that $F(t)$, $V(t)$ and $\phi(t)$ are almost periodic, then for any $\delta>0$, the sets $\mathcal T(F,\delta)$, $\mathcal T(V,\delta)$ and $\mathcal T(\phi,\delta)$ are relatively dense subsets of $\mathbb R$. By \cite[Theorem 1]{Fink69}, we obtain that the set $\mathcal T^*(\delta):=\mathcal T(F,\delta)\cap \mathcal T(V,\delta) \cap \mathcal T(\phi,\delta)$ is a relatively dense subset of $\mathbb R$. Let $u^*(s):=u(s,s-t,\phi(s-t))$ and $\mathcal T(u^*,\epsi) :=\{l\in\mathbb R:\| u^*(s+l)-u^*(s)\|\le \epsi,\forall s\in\mathbb R\}$. By \eqref{3.3}, \eqref{3.4} and \eqref{3.5}, we can deduce that
$\mathcal T^*(\delta)\subset \mathcal T(u^*,\epsi)$. This indicates that $\mathcal T(u^*,\epsi)$ is a relatively dense subset of $\mathbb R$. Thus, $u(s,s-t,\phi(s-t))$ is almost periodic in $s$.

In addition, it is easy to see that $\mathbb U(t_1+t_2)=\mathbb U(t_1)\mathbb U(t_2)$ for all $ t_1,t_2\ge0$. This implies that $\{\mathbb U(t)\}_{t\ge 0}$ is a semigroup. Therefore, $\{\mathbb U(t)\}_{t\ge0}$ is a semigroup on $AP(\mathbb R,X_0)$.
\end{proof}

\begin{lem}\label{LE3.3}
Let Assumption \ref{ASS2.3} be satisfied. Then $\mathbb U(t)$ is strongly continuous, i.e.,
$$
\lim_{t\downarrow 0}\|\mathbb U(t)\phi-\phi\|=0, \quad\forall \phi\in AP(\mathbb R,X_0).
$$
\end{lem}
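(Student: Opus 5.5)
Fix $\phi\in AP(\mathbb R,X_0)$. For $t\in(0,1]$ and $s\in\mathbb R$ I split
\begin{align*}
\|[\mathbb U(t)\phi](s)-\phi(s)\|&\le \|U(s,s-t)\bigl(\phi(s-t)-\phi(s)\bigr)\|+\|U(s,s-t)\phi(s)-\phi(s)\|.
\end{align*}
The first term is uniformly small. Since the evolution family is exponentially bounded, $\|U(s,s-t)\|\le Me^{\hat\omega t}\le Me^{|\hat\omega|}$ for $t\in[0,1]$, uniformly in $s$. Almost periodic functions are uniformly continuous \cite[Theorem 1.13]{Fink}. Hence this term is at most $Me^{|\hat\omega|}\sup_s\|\phi(s-t)-\phi(s)\|\to0$ as $t\downarrow0$.

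The second term is the main obstacle. We need $\sup_{s\in\mathbb R}\|U(s,s-t)x-x\|\to0$ with $x=\phi(s)$, uniformly over the compact set $K:=\overline{\phi(\mathbb R)}$. Almost periodic functions have relatively compact range, so $K$ is compact. The difficulty is the supremum over all $s\in\mathbb R$, which is noncompact.

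I would first write $U$ through $T_{A_0}$ directly. Arguing as for \eqref{2.8}, with $f(\tau)=(F(\tau)-V(\tau))U(\tau,s-t)x$,
$$
U(s,s-t)x-x=T_{A_0}(t)x-x+\lim_{\lambda\to+\infty}\int_{s-t}^{s}T_{A_0}(s-\tau)\lambda(\lambda I-A)^{-1}\bigl(F(\tau)-V(\tau)\bigr)U(\tau,s-t)x\,d\tau .
$$
By the Hille--Yosida estimate (the bound used in the proof of Theorem \ref{TH2.6}, \cite{Kellermann} or \cite[Theorem 2.1]{Huo25}), the norm of the limit term is at most
$$
M_A\int_{s-t}^{s}e^{\omega_A(s-\tau)}\|F(\tau)-V(\tau)\|\,\|U(\tau,s-t)\|\,\|x\|\,d\tau\le C\,t\,\|x\| .
$$
Here $C$ is independent of $s$. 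This uses the bound $K_*$ on $\|F\|,\|V\|$ from almost periodicity and the exponential bound on $U$. Since $\sup_{x\in K}\|x\|<\infty$, this term tends to $0$ uniformly in $s$.

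It remains to show $\sup_{x\in K}\|T_{A_0}(t)x-x\|\to0$. This holds because $\{T_{A_0}(t)\}$ is a $C_0$-semigroup on $X_0$ with $\|T_{A_0}(t)\|\le M_Ae^{|\omega_A|}$ on $[0,1]$, and strong continuity is uniform on compact sets. One covers $K$ by finitely many $\varepsilon$-balls and uses the uniform bound. Combining the two terms gives $\sup_s\|[\mathbb U(t)\phi](s)-\phi(s)\|\to0$, i.e. $\lim_{t\downarrow0}\|\mathbb U(t)\phi-\phi\|=0$.

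If the direct representation with the Hille--Yosida estimate were unavailable, I would fall back on \eqref{2.9} together with the analogous estimate for $\Gamma$ based on \eqref{2.8}. That route needs the same uniform-in-$s$ bound on the integral terms.
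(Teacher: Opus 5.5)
Your proof is correct, but it handles the key step differently from the paper, and your version is the more rigorous one.

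The paper uses the same ingredients you do for the easy parts: the uniform bound on $\|U(s,s-t)\|$, relative compactness of the range of $\phi$, and uniform continuity of $\phi$. It builds a finite $\varepsilon$-net $\{\varphi_i\}$, chooses the net point near $\phi(s-t)$ rather than $\phi(s)$, and splits into four terms. For the critical term $\|U(s,s-t)\varphi_i-\varphi_i\|$ in \eqref{3.7}, it only invokes the joint continuity of $U$ from Theorem \ref{TH2.6}(i). That continuity makes the term small uniformly for $s$ in compact sets, not for all $s\in\mathbb R$ as the sup-norm estimate needs.

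You instead compare $U(s,s-t)$ with the autonomous semigroup. Using the variation-of-constants formula relative to $T_{A_0}$ and the Kellermann--Hieber estimate, you get $\|U(s,s-t)x-T_{A_0}(t)x\|\le Ct\|x\|$ with $C$ independent of $s$. This reduces everything to strong continuity of $T_{A_0}$ uniformly on the compact set $\overline{\phi(\mathbb R)}$. That $s$-independent bound is exactly the uniformity the paper's argument leaves unjustified.

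Your route also only needs boundedness of $F-V$ and the exponential bound on $U$, not any almost periodicity of the coefficients. The cost is that you must have the integrated-solution representation of $U$ with respect to $T_{A_0}$ and the Hille--Yosida convolution estimate. The paper already relies on both, in \eqref{2.8} and in the proof of Theorem \ref{TH2.6}, so nothing new is assumed.
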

\begin{proof}
Due to the exponential boundedness of the evolution family $\{U(t,s)\}_{t\ge s}$, we deduce that for any fixed $t$, there exists a constant $C_1>0$ such that $\| U(s,s-t)\|<C_1,\forall s\in \mathbb R$. In addition, by the fact that the range of an almost periodic function is a sequentially compact set, there exists a sequence $\{\varphi_i\}_{i=1}^n\in X_0$ such that $cls(R_{\phi(t)})\subset \mathop{\cup}_{i=1}^n B(\varphi_i,\frac{\epsi}{3C_1+1})$, where $R_{\phi(t)}$ is the range of the almost periodic function $\phi(t)$. Therefore, we can select a $i_t\in\{1,2,...,n\}$ such that
\begin{equation}\label{3.6}
\|\phi(t)-\varphi_{i_t}\|\le \frac{\epsi}{3C_1+1}.
\end{equation}
By Theorem \ref{TH2.6}, we know that $U(s,s-t)$ is a continuous function of  $s$ and $s-t$. Thus, for each $i\in \{1,2,...,n\}$, we can find $\delta_1>0$ such that 
\begin{equation}\label{3.7}
 \|\mathbb U(s,s-t)\varphi_i-\varphi_i\|\le \frac{\epsi}{3},\quad t\in(0,\delta_1).   
\end{equation}
Recall that almost periodic functions are uniformly continuous. Therefore, for any $\epsi>0$, there exists $\delta_2>0$ such that
\begin{equation}\label{3.8}
 \|\phi(s-t)-\phi(s)\|\le \frac{\epsi}{3}, \quad\forall t\in(0,\delta_2)   
\end{equation}
Let $\delta:=\min\{\delta_1,\delta_2\}$, it follows that for any $s\in\mathbb R$ and $t\in(0,\delta)$, we have
\begin{align*}
    \| [\mathbb U(t)\phi](s)-\phi(s)\|&=   \| U(s,s-t)\phi(s-t) -\phi(s)\| \\
&\le \|U(s,s-t)\phi(s-t) - U(s,s-t)\varphi_{i_{s-t}}   \|+\|U(s,s-t)\varphi_{i_{s-t}}  -\varphi_{i_{s-t}}  \|\\
&\quad +\|\varphi_{i_{s-t}}  -\phi(s-t)\|+\|\phi(s-t)-\phi(s)\| \\
&\le  (\|U(s,s-t)\|+1) \|  \|\varphi_{i_{s-t}}  -\phi(s-t)\|+\|U(s,s-t)\varphi_{i_{s-t}}  -\varphi_{i_{s-t}}  \|      \\
&\quad +  \|\phi(s-t)-\phi(s)\| 
\end{align*}
By \eqref{3.6}, \eqref{3.7} and \eqref{3.8}, we can deduce 
$
\lim_{t\downarrow 0}\|\mathbb U(t)\phi-\phi\|=0, \forall \phi\in AP(\mathbb R,X_0).
$
\end{proof}

By Lemmas \ref{LE3.1}, \ref{LE3.2} and \ref{LE3.3}, we can directly obtain the following theorem.
\begin{thm}\label{TH3.4}
Let Assumption \ref{ASS2.3} be satisfied. Then one-parameter family $\{\mathbb U(t)\}_{t\ge0}$ is a $C_0$-semigroup on Banach space $AP(\mathbb R,X_0)$.
\end{thm}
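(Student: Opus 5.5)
The plan is to verify the three defining properties of a $C_0$-semigroup on $AP(\mathbb R,X_0)$ and read each off from the preceding lemmas. The properties are: each $\mathbb U(t)$ is a bounded linear operator on $AP(\mathbb R,X_0)$; the family satisfies $\mathbb U(0)=I$ and $\mathbb U(t_1+t_2)=\mathbb U(t_1)\mathbb U(t_2)$; and $t\mapsto\mathbb U(t)\phi$ is continuous at $0^+$.

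\textbf{Step 1: each $\mathbb U(t)$ is a bounded operator on $AP(\mathbb R,X_0)$.} For fixed $t\ge0$ and $\phi\in AP(\mathbb R,X_0)$:
\begin{itemize}
\item Lemma \ref{LE3.1} gives that $s\mapsto[\mathbb U(t)\phi](s)$ is continuous.
\item Lemma \ref{LE3.2} gives that this function is almost periodic, so $\mathbb U(t)$ maps $AP(\mathbb R,X_0)$ into itself.
\item Linearity is immediate from the linearity of each $U(s,s-t)$.
\end{itemize}
For boundedness, I use the exponential bound $\|U(t,s)\|\le Me^{\hat\omega(t-s)}$ of the evolution family from \cite[Proposition 4.1]{Magal09ADE}. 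The key point is that this bound is uniform in $s$, because $F$ and $V$ are bounded. It gives
$$\|\mathbb U(t)\phi\|=\sup_{s\in\mathbb R}\|U(s,s-t)\phi(s-t)\|\le Me^{\hat\omega t}\|\phi\|,$$
so $\|\mathbb U(t)\|\le Me^{\hat\omega t}$.

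\textbf{Step 2: the algebraic semigroup law.} $\mathbb U(0)=I$ holds because $U(s,s)=I$. For the composition law, the evolution property $U(t,r)U(r,s)=U(t,s)$ gives
$$[\mathbb U(t_1)\mathbb U(t_2)\phi](s)=U(s,s-t_1)U(s-t_1,s-t_1-t_2)\phi(s-t_1-t_2)=[\mathbb U(t_1+t_2)\phi](s).$$
This was already noted in Lemma \ref{LE3.2}.

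\textbf{Step 3: strong continuity.} Lemma \ref{LE3.3} gives $\lim_{t\downarrow0}\|\mathbb U(t)\phi-\phi\|=0$ for every $\phi$. Combined with Steps 1 and 2, this is exactly the definition of a $C_0$-semigroup.

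Strong continuity at an arbitrary $t_0>0$ then follows by the standard argument from the uniform bound $\|\mathbb U(t)\|\le Me^{\hat\omega t}$ and the semigroup law; no separate argument is needed.

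The only step that needs attention is the uniformity in $s$ of the bound on $\|U(s,s-t)\|$ in Step 1. This is supplied by the exponential boundedness of $\{U(t,s)\}_{t\ge s}$, so the theorem follows directly from Lemmas \ref{LE3.1}–\ref{LE3.3}.
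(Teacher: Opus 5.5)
Your proposal is correct and follows the paper's route: the paper obtains Theorem \ref{TH3.4} directly by combining Lemmas \ref{LE3.1}, \ref{LE3.2} and \ref{LE3.3}. The paper leaves implicit at this point the bound $\|\mathbb U(t)\|\le Me^{\hat\omega t}$, which you derive from the uniform exponential boundedness of $\{U(t,s)\}_{t\ge s}$; it appears only later, in the proof of Proposition \ref{PR3.6}, so stating it here is a worthwhile addition.
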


\begin{remk}\label{RE3.5}
Following the ideas of Lemmas \ref{LE3.1}, \ref{LE3.2}, \ref{LE3.3} and Theorem \ref{TH3.4}, we can deduce that $\{\mathbb V(t)\}_{t\ge 0}$ is also a $C_0$-semigroup on $AP(\mathbb R,X_0)$. By the proof above, we can find that the almost periodicity of $\{\mathbb U(t)\}_{t\ge 0}$ and $\{\mathbb V(t)\}_{t\ge 0}$ does not need Assumption \ref{ASS2.4}. In fact, the almost periodicity of functions $F(t)$ and $V(t)$ ensures the almost periodicity of the evolution semigroups $\{\mathbb U(t)\}_{t\ge 0}$ and $\{\mathbb V(t)\}_{t\ge 0}$. More precisely, if $H(t):X_0\to X$ is almost periodic with respect to $t$, then the following almost periodic non-densely defined Cauchy problem
$$\frac{du(t)}{dt}=Au(t)+H(t)u(t)$$
admits an evolution semigroup on $AP(\mathbb R, X_0)$.
\end{remk}
\begin{prop}\label{PR3.6}
Let Assumption \ref{ASS2.3} be satisfied. Then
$$
\omega(\mathbb U)=\omega(U),
$$
where $\omega(\cdot)$ represents the exponential growth bound.    
\end{prop}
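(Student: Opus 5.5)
We prove the equality $\omega(\mathbb U)=\omega(U)$ by establishing the two inequalities $\omega(\mathbb U)\le \omega(U)$ and $\omega(U)\le \omega(\mathbb U)$ separately. Recall that $\omega(\mathbb U)=\inf\{\hat\omega:\exists M\ge1,\ \|\mathbb U(t)\|\le Me^{\hat\omega t},\ \forall t\ge0\}$, where $\|\cdot\|$ is the operator norm on $AP(\mathbb R,X_0)$ with the supremum norm. Since $\{\mathbb U(t)\}_{t\ge0}$ is a $C_0$-semigroup by Theorem \ref{TH3.4}, this growth bound is well defined. The key identity behind both inequalities is
$$
\|\mathbb U(t)\|_{\mathcal L(AP(\mathbb R,X_0))}=\sup_{s\in\mathbb R}\|U(s,s-t)\|_{\mathcal L(X_0)},\qquad t\ge0 .
$$

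\textbf{Step 1 ($\le$ in the identity).} Take $\hat\omega>\omega(U)$ and $M\ge1$ with $\|U(t,s)\|\le Me^{\hat\omega(t-s)}$ for all $t\ge s$. For any $\phi\in AP(\mathbb R,X_0)$,
$$
\|[\mathbb U(t)\phi](s)\|\le \|U(s,s-t)\|\,\|\phi(s-t)\|\le Me^{\hat\omega t}\|\phi\|.
$$
Taking the supremum over $s$ gives $\|\mathbb U(t)\|\le Me^{\hat\omega t}$, hence $\omega(\mathbb U)\le\hat\omega$. Letting $\hat\omega\downarrow\omega(U)$ yields $\omega(\mathbb U)\le\omega(U)$.

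\textbf{Step 2 ($\ge$ in the identity, the main obstacle).} Here one must test $\mathbb U(t)$ on almost periodic functions that nearly realize $\|U(s_0,s_0-t)\|$. Fix $t\ge0$, $s_0\in\mathbb R$, and $x\in X_0$ with $\|x\|=1$. Take the constant function $\phi\equiv x$, which is trivially almost periodic and satisfies $\|\phi\|=1$. Then
$$
\|\mathbb U(t)\|\ge\|[\mathbb U(t)\phi](s_0)\|=\|U(s_0,s_0-t)x\|.
$$
Taking the supremum over $x$ and $s_0$ gives $\|\mathbb U(t)\|\ge\sup_{s}\|U(s,s-t)\|$. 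Note that this avoids any cutoff or localization, which would destroy almost periodicity.

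\textbf{Step 3 (conclusion).} Let $\hat\omega>\omega(\mathbb U)$, so $\|\mathbb U(\tau)\|\le Me^{\hat\omega \tau}$ for all $\tau\ge0$. For arbitrary $t\ge s$, set $\tau=t-s$. By Step 2,
$$
\|U(t,s)\|=\|U(t,t-\tau)\|\le\|\mathbb U(\tau)\|\le Me^{\hat\omega(t-s)},
$$
with $M$ uniform in $s$. Hence $\omega(U)\le\hat\omega$, and letting $\hat\omega\downarrow\omega(\mathbb U)$ gives $\omega(U)\le\omega(\mathbb U)$. Combined with Step 1, this proves $\omega(\mathbb U)=\omega(U)$.

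The only subtle point is Step 2: checking that the test function lies in $AP(\mathbb R,X_0)$, which holds because constants are almost periodic, and that the resulting bound is uniform in $s$. Once the identity is established, the equality of growth bounds follows directly from the definitions.
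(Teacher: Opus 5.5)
Your proof is correct and takes the same route as the paper. Both establish $\|\mathbb U(t)\|=\sup_{s\in\mathbb R}\|U(s,s-t)\|$, using the pointwise bound for the upper estimate and test functions of the form $g(\cdot)w$ for the lower estimate, and then read off $\omega(\mathbb U)=\omega(U)$ from the definitions. Your constant test function $\phi\equiv x$ is a streamlined version of the paper's sequence $h_i=g_iw$ with $g_i\to 1$ uniformly, which in effect amounts to taking $g_i\equiv 1$.
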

\begin{proof}
Firstly, we define the exponential growth bound of the semigroup $\{\mathbb U(t)\}_{t\ge 0}$ as follows
$$
\omega(\mathbb U)=\inf\{\hat\omega\in \mathbb R:\exists M\ge 1:t\ge0:\|\mathbb U(t)\|\le Me^{\hat\omega t}\}.
$$
For each $f\in AP(\mathbb R,X_0)$, by the fact that
$$
\|[\mathbb U(t)f](t+s)\|=\|U(t+s,s)f(s)\|\le \sup_{s\in\mathbb R}\|U(t+s,s)\|\|f\|,
$$
it follows that $\|\mathbb U(t)\|\le \sup_{s\in\mathbb R}\|U(t+s,s)\|$.

Then we choose a sequence of almost periodic functions $\{g_i(t)\}_{i=1}^{\infty}\subset AP(\mathbb R, \mathbb R_+)$ such that $\|g_i\|=1,\forall i>1$ and $g_i(t)\to 1$ as $i\to +\infty$ uniformly on $\mathbb R$.  Let $w\in X_0$ with $\|w\|\le 1$ and $h_i(t)=g_i(t)w$. Then we have $\|h_i\|\le 1$ and 
$$
\|[\mathbb U(t)h_i](t+s)\|=\|U(t+s,s)h_i(s)\|=\|U(t+s,s)g_i(s)w\|.
$$
By Theorem \ref{TH2.6}, we know that $U(t+s,s)$ is continuous in $s$. Thus, we have
$$
\|U(t+s,s)w\|=\lim_{i\to+\infty}\|\mathbb U(t)h_i\|\le \lim_{i\to+\infty}\sup\|\mathbb U(t)\|\|h_i\|\le \|\mathbb U (t)\|
$$
Due to $w\in  X$ with $\|w\|\le 1$, we can deduce that $\sup_{s\in\mathbb R}\|U(t+s,s)\|\le \|\mathbb U(t)\|$. It follows from the definition of exponential growth bound that $\omega(\mathbb U)=\omega(U)$.
\end{proof}
\begin{remk}
Proposition \ref{PR3.6} is a classical result in the theory of the evolution semigroups. The idea of proof is from \cite[Theorem 3.23]{Chicone} and \cite[Lemma B.1]{Thieme09}. Similarly, we also can obtain $\omega(\mathbb V)=\omega(\Gamma)$.
\end{remk}

\subsection{Infinitesimal generators}
\indent\indent In this subsection, we investigate the infinitesimal generators of the evolution semigroups $\{\mathbb U(t)\}_{t\ge 0}$ and $\{\mathbb V(t)\}_{t\ge 0}$ respectively.

For $\lambda\in \mathbb R$ and $\lambda>\omega(\Gamma)$, we define an operator $\mathcal L_\lambda$ by
\begin{equation}\label{3.9}
\mathcal L_\lambda (\phi)(t):=\lim_{\mu\to +\infty}\int_{-\infty}^t e^{-\lambda(t-s)}\Gamma(t,s)\mu(\mu I-A)^{-1}\phi(s)ds, \quad t\in \mathbb R, \phi\in AP(\mathbb R,X).
\end{equation}
In addition, for $\lambda\in \mathbb R$, $\lambda>\omega(\Gamma)$ and $\mu\in \mathbb R$, $\mu>\max\{\omega({\Gamma}),\omega_A\}$, we define another operator $\mathbb L_\lambda^\mu$ by
\begin{equation}\label{3.10}
\mathbb L_\lambda^\mu (\phi)(t):=\int_{-\infty}^t e^{-\lambda(t-s)}\Gamma(t,s)\mu(\mu I-A)^{-1}\phi(s)ds,  \quad t\in \mathbb R, \phi\in AP(\mathbb R,X).
\end{equation}
\begin{remk}
It is clear that $\lim_{\mu\to +\infty}\mathbb L_\lambda^\mu=\mathcal L_\lambda$. In addition, by \cite[Theorem 1.11]{Magal21}, the limits in \eqref{3.9} and \eqref{3.10} are uniform with respect to $t$ in $\mathbb R$.
\end{remk}

\begin{lem}\label{LE3.9}\cite[Theorem 1.11]{Magal21}
Let Assumptions \ref{ASS2.3} and \ref{ASS2.4} be satisfied. Then the following statements are valid.
\begin{itemize}
\item[{\rm (i)}] For any $f\in BUC(\mathbb R,X),$ the Cauchy problem
$$
\frac{du(t)}{dt}=Au(t)-V(t)u(t)+f(t), \quad t\ge t_0,
$$  
admits a unique entire mild solution $u(t)\in C(\mathbb R,X_0)$ and $u(t)$ has the following expression
\begin{equation}\label{3.11}
    u(t)=\lim_{\mu\to+\infty}\int_{-\infty}^t \Gamma(t,s) \mu(\mu I-A)^{-1}f(s)ds,\quad t\in \mathbb R,
\end{equation}
where $BUC(\mathbb R,X)$ is the Banach space of bounded and uniformly continuous functions and the limit of \eqref{3.11} is uniform with respect to $t$ in $\mathbb R$. Moreover, if $f\in AP(\mathbb R,X_0)$, \eqref{3.11} admits the following expression
$$
u(t)=\int_{-\infty}^t \Gamma(t,s)f(s)ds.
$$
\item[{\rm (ii)}] For any $f\in BUC(\mathbb R,X)$, there exists a constant $C>0$ such that
$$
\|u\|\le C\|f\|.
$$
\end{itemize}
\end{lem}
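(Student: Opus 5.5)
We will reduce the statement to the non-autonomous theory of \cite{Magal21} for the perturbed problem $\frac{du}{dt}=Au-V(t)u+f(t)$, using the exponential stability of $\{\Gamma(t,s)\}_{t\ge s}$ supplied by Assumption \ref{ASS2.4}. Fix $0<\alpha<-\omega(\Gamma)$ and $M\ge1$ with $\|\Gamma(t,s)\|\le Me^{-\alpha(t-s)}$ for $t\ge s$.

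\textbf{Step 1: approximating entire solutions.} For $\mu>\max\{\omega_A,\omega(\Gamma)\}$ and $f\in BUC(\mathbb R,X)$, set
$$u_\mu(t):=\int_{-\infty}^t\Gamma(t,s)\mu(\mu I-A)^{-1}f(s)ds .$$
This is well defined because $\mu(\mu I-A)^{-1}f(s)\in X_0$ is bounded by $\frac{M_A\mu}{\mu-\omega_A}\|f\|$ and $\Gamma$ decays exponentially. Moreover $u_\mu$ is the unique bounded mild solution on $\mathbb R$ of the problem with the regularized forcing $f_\mu=\mu(\mu I-A)^{-1}f$. To see this, write $u_\mu(t)=\Gamma(t,t_0)u_\mu(t_0)+\int_{t_0}^t\Gamma(t,s)f_\mu(s)ds$, which is a cocycle identity obtained from $\Gamma(t,s)\Gamma(s,r)=\Gamma(t,r)$.

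\textbf{Step 2: uniform convergence as $\mu\to+\infty$ (the main obstacle).} The operators $\mu(\mu I-A)^{-1}$ do not converge in norm on $X$, since $f$ takes values in $X$ and not in $X_0$. So we cannot pass to the limit inside the integral. Instead we use the integrated-semigroup estimate of \cite{Kellermann} (as in the proof of Theorem \ref{TH2.6}) on each finite window $[t-\tau,t]$:
$$\Big\|\lim_{\mu\to\infty}\int_{t-\tau}^t T_{A_0}(t-s)\mu(\mu I-A)^{-1}g(s)ds\Big\|\le M_A\int_{t-\tau}^te^{\omega_A(t-s)}\|g(s)\|ds .$$
Combined with the variation of constants formula \eqref{2.8} relating $\Gamma$ and $T_{A_0}$, this shows that the family $\int_{t-\tau}^t\Gamma(t,s)\mu(\mu I-A)^{-1}f(s)ds$ is Cauchy in $\mu$, uniformly in $t$. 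The uniform continuity of $f$ is what makes the Cauchy property uniform in $t$. The tail $\int_{-\infty}^{t-\tau}$ equals $\Gamma(t,t-\tau)\,u_\mu(t-\tau)$ up to the same structure, so it is bounded by $Me^{-\alpha\tau}\cdot C\|f\|$ uniformly in $\mu$. Choosing $\tau$ large first and then $\mu$ large gives uniform convergence on $\mathbb R$ to a limit $u\in C(\mathbb R,X_0)$. This is exactly the content of \cite[Theorem 1.11]{Magal21}, which we will invoke directly for the hard part.

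\textbf{Step 3: mild solution, uniqueness and bound.} Passing to the limit in the cocycle identity of Step 1, using the uniform convergence from Step 2 and the finite-interval formula \cite[Theorem 1.6]{Magal21}, shows that $u$ is a mild solution on every $[t_0,\infty)$. For uniqueness, the difference $w$ of two bounded entire solutions satisfies $w(t)=\Gamma(t,t_0)w(t_0)$. Letting $t_0\to-\infty$ and using exponential decay gives $w\equiv0$.

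The bound in (ii) follows from the estimate in Step 2 summed over windows $[t-(k+1),t-k]$:
$$\|u(t)\|\le \sum_{k\ge0}Me^{-\alpha k}\,M_A e^{|\omega_A|}\|f\|,$$
so $C=\frac{MM_Ae^{|\omega_A|}}{1-e^{-\alpha}}$ works.

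\textbf{Step 4: the case $f\in AP(\mathbb R,X_0)$.} Here $\mu(\mu I-A)^{-1}x\to x$ for $x\in X_0$, since $A_0$ is densely defined on $X_0$. The range of $f$ is relatively compact, so this convergence is uniform on that range. Dominated convergence with the integrable bound $Me^{-\alpha(t-s)}\sup_\mu\|\mu(\mu I-A)^{-1}\|\,\|f\|$ then gives $u(t)=\int_{-\infty}^t\Gamma(t,s)f(s)ds$.
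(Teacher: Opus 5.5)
Your proposal follows the same route as the paper. The paper gives no argument of its own for this lemma and imports it directly from \cite[Theorem 1.11]{Magal21}. You invoke the same theorem for the one hard point: convergence of $\int_{-\infty}^t\Gamma(t,s)\mu(\mu I-A)^{-1}f(s)\,ds$ uniformly in $t\in\mathbb R$ when $f$ is only $X$-valued.

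Your supplementary arguments are sound.
\begin{itemize}
\item You prove uniqueness in the class of bounded entire solutions via $w(t)=\Gamma(t,t_0)w(t_0)$. Reading the statement this way is necessary. For example, with $X=X_0=\mathbb R$, $A=0$, $V\equiv 1$, the function $e^{-t}$ is a nonzero entire solution of the homogeneous problem.
\item For $X_0$-valued $f$, dominated convergence gives the unregularized formula. Compactness of the range is not even needed for the pointwise identity.
\end{itemize}

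One step is misjustified. In (ii) you bound each window $\lim_{\mu\to+\infty}\int_{t-k-1}^{t-k}\Gamma(t-k,s)\mu(\mu I-A)^{-1}f(s)\,ds$ by $M_Ae^{|\omega_A|}\|f\|$. That is the Kellermann--Hieber estimate for $T_{A_0}$, and it does not transfer to the perturbed family $\Gamma$ with the same constant. Going through \eqref{2.8} brings in a Gronwall factor involving $\sup_t\|V(t)\|$. Using $\|\Gamma(t,s)\|\le Me^{-\alpha(t-s)}$ directly costs an extra factor $M$.

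The fix is immediate and uses a bound you already wrote down for the tail in Step 2:
\begin{itemize}
\item For all $t$, $\|u_\mu(t)\|\le \frac{M}{\alpha}\frac{\mu M_A}{\mu-\omega_A}\|f\|$.
\item Letting $\mu\to+\infty$ gives $\|u\|\le \frac{MM_A}{\alpha}\|f\|$.
\end{itemize}
This is exactly the estimate \eqref{3.13}--\eqref{3.14} used in the proof of Theorem \ref{TH3.12}. It is also no larger than your stated constant, since $1-e^{-\alpha}\le\alpha$. So your final inequality is true, but the window argument you gave does not establish it.
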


Here, in order to obtain the almost periodicity of operator $\mathcal L_\lambda$, we add the following assumption.

\begin{ass}\label{ASS3.10}
For the evolution family $\{\Gamma(t,s)\}_{t\ge s}$, we assume that for any $\epsilon>0$ and $\nu>0$, there exist $\eta=\eta(\epsilon,\nu)>0$ and $\delta=\delta(\epsilon,\nu)<0$ such that
$$
\|\Gamma(t+\tau,s+\tau)-\Gamma(t,s)\|
\leq
\epsilon e^{\delta(t-s)}
$$
for all $t\geq s$ with $t-s\geq\nu$ and all $\tau\in \mathcal T(V,\eta)$, where $\mathcal T(V,\eta)$ is defined in \eqref{2.1}.
\end{ass}
\begin{remk}
Assumption \ref{ASS3.10} describes an almost periodic translation property of the evolution family $\{\Gamma(t,s)\}_{t\geq s}$. For nonautonomous parabolic evolution equations, estimates of this type can be obtained under suitable Acquistapace-Terreni conditions and almost periodicity assumptions on the associated operator family \cite[Proposition 4.4]{Maniar}. In fact, in many practical models, we can directly derive this assumption, as seen in \cite[Lemma 3.1]{Wang23}, \cite[Lemma 5.2]{Huo25} and the applications in this paper.
\end{remk}

\begin{thm}\label{TH3.12}
Let Assumptions \ref{ASS2.3}, \ref{ASS2.4} and \ref{ASS3.10} be satisfied. Then, for each $\lambda>\omega(\Gamma)$, $\mathcal L_\lambda$ is a positive bounded linear operator from $AP(\mathbb R,X)$ to $AP(\mathbb R,X_0)$.
\end{thm}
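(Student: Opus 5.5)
We split the statement into three parts: boundedness, positivity, and the fact that $\mathcal L_\lambda\phi$ lies in $AP(\mathbb R,X_0)$.

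\textbf{Reduction to Lemma \ref{LE3.9}.} Fix $\lambda>\omega(\Gamma)$. The weighted family $\Gamma_\lambda(t,s):=e^{-\lambda(t-s)}\Gamma(t,s)$ is the evolution family of the problem \eqref{2.7} with $V(t)$ replaced by $V(t)+\lambda I$. Its growth bound is $\omega(\Gamma)-\lambda<0$, so Assumption \ref{ASS2.4} holds for it. Assumption \ref{ASS2.3}(ii) is also preserved after enlarging $\lambda_1$ if needed, since positivity is not used in the existence part. Hence Lemma \ref{LE3.9}, applied with $V+\lambda I$, shows the following for every $\phi\in AP(\mathbb R,X)\subset BUC(\mathbb R,X)$:
\begin{itemize}
\item $\mathcal L_\lambda\phi$ is the unique entire mild solution in $C(\mathbb R,X_0)$ of $u'=Au-(V(t)+\lambda)u+\phi$;
\item the limit in \eqref{3.9} is uniform in $t$;
\item $\|\mathcal L_\lambda\phi\|\le C\|\phi\|$.
\end{itemize}
Linearity is immediate from the formula. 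This gives boundedness.

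\textbf{Positivity.} For $\mu$ large, $\mu(\mu I-A)^{-1}$ is positive by Assumption \ref{ASS2.3}(i). I will show $\Gamma(t,s)$ is positive on $X_0$ by writing \eqref{2.7} as $v'=(A-\lambda_1)v+(\lambda_1-V(t))v$. The first operator is resolvent positive. The perturbation $\lambda_1-V(t)$ maps $X_0^+$ into $X_+$ by Assumption \ref{ASS2.3}(ii). The Picard iteration of the variation of constants formula (the analogue of \eqref{2.8} with $T_{A_0-\lambda_1}$) then converges to $\Gamma$ and preserves $X_0^+$. The integrand in \eqref{3.10} is therefore in $X_0^+$ whenever $\phi\ge0$, so $\mathbb L^\mu_\lambda\phi\ge0$. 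Since the cone is closed, letting $\mu\to\infty$ gives $\mathcal L_\lambda\phi\ge0$.

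\textbf{Almost periodicity (the main obstacle).} Since $AP(\mathbb R,X_0)$ is closed in $BUC$ and $\mathbb L^\mu_\lambda\phi\to\mathcal L_\lambda\phi$ uniformly in $t$, it suffices to show $\mathbb L^\mu_\lambda\phi$ is almost periodic for each fixed large $\mu$. Set $g:=\mu(\mu I-A)^{-1}\phi\in AP(\mathbb R,X_0)$, because a bounded operator preserves almost periodicity. Substituting $s=t-r$,
\[
\mathbb L^\mu_\lambda\phi(t+\tau)-\mathbb L^\mu_\lambda\phi(t)=\int_0^\infty e^{-\lambda r}\big[\Gamma(t+\tau,t+\tau-r)g(t+\tau-r)-\Gamma(t,t-r)g(t-r)\big]dr .
\]
I split the integral at $r=\nu$.

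On $[0,\nu]$, the exponential bound $\|\Gamma(t,s)\|\le Me^{\hat\omega(t-s)}$ bounds the integrand by a constant times $\|g\|$. With $\nu$ small this piece is below $\epsilon/3$.

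On $[\nu,\infty)$, I write the bracket as
\[
\big(\Gamma(t+\tau,t+\tau-r)-\Gamma(t,t-r)\big)g(t+\tau-r)+\Gamma(t,t-r)\big(g(t+\tau-r)-g(t-r)\big).
\]
Assumption \ref{ASS3.10} bounds the first term by $\epsilon' e^{\delta r}\|g\|$ for $\tau\in\mathcal T(V,\eta)$. The second is bounded by $Me^{\hat\omega r}\epsilon'$ for $\tau\in\mathcal T(g,\epsilon')$, where $\omega(\Gamma)<\hat\omega<\lambda$. Both are integrable against $e^{-\lambda r}$, because $\delta<0<\lambda-\omega(\Gamma)$ allows a uniform choice of constants.

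The common translation set $\mathcal T(V,\eta)\cap\mathcal T(g,\epsilon')$ is relatively dense by \cite[Theorem 1]{Fink69}, as in Lemma \ref{LE3.2}. This yields $\|\mathbb L^\mu_\lambda\phi(\cdot+\tau)-\mathbb L^\mu_\lambda\phi\|<\epsilon$ on a relatively dense set of $\tau$. Continuity in $t$ follows from Lemma \ref{LE3.9} (or directly from Theorem \ref{TH2.6}(i) and dominated convergence).

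The delicate point is making the $\epsilon$-bounds independent of $\mu$ if one wants to avoid relying on the uniform limit. Relying on the uniform convergence in $t$ from Lemma \ref{LE3.9} sidesteps this, so it is the approach I would take first.
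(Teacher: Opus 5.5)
Your overall architecture matches the paper's. You fix $\mu$, show $\mathbb L^\mu_\lambda\phi$ is almost periodic by splitting the integral and combining Assumption \ref{ASS3.10} with common almost periods of $(V,\phi)$, and then pass to $\mu\to\infty$ using the uniform-in-$t$ convergence. Two of your steps are fine and more careful than the paper's. Your boundedness argument applies Lemma \ref{LE3.9} to $V+\lambda I$, whose evolution family is $e^{-\lambda(t-s)}\Gamma(t,s)$, where the paper uses a direct estimate. Your perturbation/Picard proof that $\Gamma(t,s)$ is positive replaces the paper's ``easy to check''.

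\textbf{The gap is in your estimate on $[\nu,\infty)$.} The first term there gives $\int_\nu^\infty \epsilon' e^{(\delta-\lambda)r}\|g\|\,dr$, which is finite only if $\lambda>\delta$.
\begin{itemize}
\item The theorem is claimed for every $\lambda>\omega(\Gamma)$. Since $\omega(\Gamma)<0$, this includes negative $\lambda$, and Theorem \ref{TH3.13}(ii) genuinely needs that range.
\item The constant $\delta=\delta(\epsilon',\nu)<0$ is handed to you by Assumption \ref{ASS3.10}. You can weaken it toward $0$ but cannot make it more negative.
\item So for $\lambda\in(\omega(\Gamma),\delta]$ the integral diverges.
\end{itemize}
Your justification ``$\delta<0<\lambda-\omega(\Gamma)$'' does not imply $\delta<\lambda$. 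The argument only works as written when $\lambda\ge 0$.

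\textbf{The repair is the paper's three-piece split, with a specific order of choices.}
\begin{itemize}
\item First choose $\nu$ small and $H$ large, depending only on $\epsilon$, $\|g\|$, $M$ and some $\hat\omega\in(\omega(\Gamma),\lambda)$. Choose them so that both $[0,\nu]$ and $[H,\infty)$ are controlled by the crude bound $\|\Gamma(t+\tau,t+\tau-r)-\Gamma(t,t-r)\|\le 2Me^{\hat\omega r}$. This bound is integrable against $e^{-\lambda r}$.
\item Use Assumption \ref{ASS3.10} only on the compact range $[\nu,H]$. There $e^{\delta r}\le 1$, and $\int_\nu^H e^{-\lambda r}\,dr<\infty$ whatever the sign of $\lambda$.
\item Only then pick $\epsilon'$, which must depend on $H$ because $\int_\nu^H e^{-\lambda r}dr$ can be large when $\lambda<0$. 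After that take $\eta=\eta(\epsilon',\nu)$ and intersect with $\mathcal T(g,\epsilon')$ as you do.
\end{itemize}
Your second term, involving $g(\cdot+\tau)-g$, is already fine because $\hat\omega<\lambda$. With this change the rest of your argument goes through, including the passage $\mu\to\infty$ via uniform convergence.
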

\begin{proof}
Recall that $A$ is a Hille-Yosida operator, then we have
$$\left\|\mu(\mu I-A)^{-1}\right\|\le \frac{\mu M_A}{\mu-\omega_A},\quad\forall\mu> \max\{0,\omega_A\}.$$ 
In addition, by Assumption \ref{ASS2.4}, we have $\|\Gamma(t,s)\|\le M_\Gamma e^{\omega(\Gamma)(t-s)}$ with $\omega(\Gamma)<0$. Thus, for each $\lambda\in \mathbb R$, $\lambda>\omega(\Gamma)$ and each $\mu\in \mathbb R$, $\mu>\max\{0,\omega_A\}$, we have

\begin{equation}\label{3.13}
    \|\mathbb L^\mu_\lambda(\phi)\|\le \frac{M_\Gamma}{\lambda-\omega(\Gamma)} \frac{\mu M_A}{\mu-\omega_A} \|\phi\|,\quad \forall \phi\in AP(\mathbb R,X).
\end{equation}
By setting $\mu\to +\infty$, we obtain
\begin{equation}\label{3.14}
    \|\mathcal L_\lambda(\phi)\|\le \frac{M_A M_\Gamma}{\lambda-\omega(\Gamma)} \|\phi\|.
\end{equation}
This implies that $\mathcal L_\lambda$ is a bounded operator for each $\lambda>\omega(\Gamma)$. Moreover, it follows from Lemma \ref{LE3.9} that $\mathcal L_\lambda$ maps $AP(\mathbb R, X)$ to $BUC(\mathbb R, X_0)$. In addition, by \eqref{3.9}, it is easy to check that $\mathcal L_\lambda$ is a positive operator.

Finally, we show that $\mathcal L_\lambda$ maps $AP(\mathbb R,X)$ to $AP(\mathbb R,X_0)$.  Let $\phi\in AP(\mathbb R,X)$ and
$\epsilon>0$ be given. Fix
$\mu>\max\{0,\omega_A\}$. Since
$\lambda>\omega(\Gamma)$, we can choose $\nu>0$ sufficiently small
and $H>\nu$ sufficiently large such that
$$
2M_\Gamma\frac{\mu M_A}{\mu-\omega_A}\|\phi\|
\int_{t-\nu}^{t}
e^{(\omega(\Gamma)-\lambda)(t-s)}ds
<\frac{\epsilon}{4}
$$
and
$$
2M_\Gamma\frac{\mu M_A}{\mu-\omega_A}\|\phi\|
\int_{-\infty}^{t-H}
e^{(\omega(\Gamma)-\lambda)(t-s)}ds
<\frac{\epsilon}{4}.
$$
Notice that the above two integrals are independent of $t$. In addition, we take $\epsilon_1>0$ sufficiently small such that
$$
\frac{\mu M_A}{\mu-\omega_A}\|\phi\|\epsilon_1
\int_{t-H}^{t-\nu}
e^{-\lambda(t-s)}ds
<\frac{\epsilon}{4}.
$$
By Assumption \ref{ASS3.10}, there exist $\eta=\eta(\epsilon_1,\nu)>0$ and $\delta=\delta(\epsilon_1,\nu)<0$ such that
$$
\|\Gamma(t+\tau,s+\tau)-\Gamma(t,s)\| \le\epsilon_1e^{\delta(t-s)}
$$
for all $t-s\geq\nu$ and $\tau\in\mathcal T(V,\eta)$. Furthermore, we take $\epsilon_2>0$ sufficiently small such that
$$
\frac{M_\Gamma}{\lambda-\omega(\Gamma)}
\frac{\mu M_A}{\mu-\omega_A}\epsilon_2
<\frac{\epsilon}{4}.
$$
Since $V(t)$ is an almost periodic function and $\phi\in AP(\mathbb R,X)$,
the function $t\mapsto (V(t),\phi(t))$ is almost periodic. Hence, there exists a relatively dense set $\mathcal T_*\subset\mathbb R$ such that, for any $\tau\in\mathcal T_*$, 
$$
\tau\in \mathcal T(V,\eta) \text{ and }\|\phi(t+\tau)-\phi(t)\|<\epsilon_2,
\quad t\in\mathbb R,
$$
where $\eta>0$ is given by Assumption \ref{ASS3.10}.

For any $\tau\in\mathcal T_*$, we have

$$
\begin{aligned}
&\quad \|\mathbb L_\lambda^\mu(\phi)(t+\tau)-\mathbb L_\lambda^\mu(\phi)(t)\| \\ 
&=  \left\| \int_{-\infty}^{t+\tau} e^{-\lambda(t+\tau-s)}\Gamma(t+\tau,s)\mu(\mu I-A)^{-1}\phi(s)ds-\int_{-\infty}^t e^{-\lambda(t-s)}\Gamma(t,s)\mu(\mu I-A)^{-1}\phi(s)ds\right\| \\
&= \left\| \int_{-\infty}^{t} e^{-\lambda(t-s)}\Gamma(t+\tau,s+\tau)\mu(\mu I-A)^{-1}\phi(s+\tau)ds-\int_{-\infty}^t e^{-\lambda(t-s)}\Gamma(t,s)\mu(\mu I-A)^{-1}\phi(s)ds\right\| \\
&\le \sup_{t\in \mathbb R}\left[ \int_{-\infty}^{t} e^{-\lambda(t-s)}\left\| \Gamma(t+\tau,s+\tau)-\Gamma(t,s) \right\| \left\| \mu(\mu I-A)^{-1}\right\| \left\|\phi(s+\tau) \right\|ds\right.\\
&\quad\left.+\int_{-\infty}^{t} e^{-\lambda(t-s)}\left\| \Gamma(t,s) \right\| \left\| \mu(\mu I-A)^{-1}\right\| \left\|\phi(s+\tau)-\phi(s) \right\|  ds  \right]\\
&\le
\sup_{t\in \mathbb R}\left\{\frac{\mu M_A}{\mu-\omega_A}\|\phi\|
\left[
\int_{t-\nu}^{t}
e^{-\lambda(t-s)}
\|\Gamma(t+\tau,s+\tau)-\Gamma(t,s)\|ds\right.\right.\left.+
\int_{t-H}^{t-\nu}
e^{-\lambda(t-s)}
\|\Gamma(t+\tau,s+\tau)-\Gamma(t,s)\|ds\right.\\
&\quad+\left.\left.\int_{-\infty}^{t-H}e^{-\lambda(t-s)}\|\Gamma(t+\tau,s+\tau)-\Gamma(t,s)\|ds
\right] +
M_\Gamma\frac{\mu M_A}{\mu-\omega_A}\epsilon_2
\int_{-\infty}^{t}
e^{(\omega(\Gamma)-\lambda)(t-s)}ds \right\}\\
&\le
2M_\Gamma\frac{\mu M_A}{\mu-\omega_A}\|\phi\|
\int_{t-\nu}^{t}
e^{(\omega(\Gamma)-\lambda)(t-s)}ds+\frac{\mu M_A}{\mu-\omega_A}\|\phi\|\epsilon_1\int_{t-H}^{t-\nu}e^{-\lambda(t-s)}ds\\
&\quad+
2M_\Gamma\frac{\mu M_A}{\mu-\omega_A}\|\phi\|
\int_{-\infty}^{t-H}
e^{(\omega(\Gamma)-\lambda)(t-s)}ds+\sup_{t\in \mathbb R}\left\{M_\Gamma\frac{\mu M_A}{\mu-\omega_A}\epsilon_2\int_{-\infty}^{t}e^{(\omega(\Gamma)-\lambda)(t-s)}ds\right\}\\
&<\epsilon.
\end{aligned}
$$
Since $\mathcal T_*$ is relatively dense, we obtain
$\mathbb L_\lambda^\mu(\phi)\in AP(\mathbb R,X_0).$
By letting $\mu\to+\infty$ and using the uniform convergence of
$\mathbb L_\lambda^\mu(\phi)$ to $\mathcal L_\lambda(\phi)$, we conclude that $\mathcal L_\lambda(\phi)\in AP(\mathbb R,X_0).$
\end{proof}

Next, inspired by Djidjou-Demasse et al.\cite{Djidjou-Demasse}, we give two theorems to relate the infinitesimal generator of the evolution semigroup $\{\mathbb V(t)\}_{t\ge 0}$ to the operator $\mathcal L_\lambda$. Here, we extend Djidjou-Demasse et al.'s results on the periodic case to the almost periodic case.
\begin{thm}\label{TH3.13}
Let Assumptions \ref{ASS2.3}, \ref{ASS2.4} and \ref{ASS3.10} be satisfied and $\mathcal B_0:D(\mathcal B_0)\subset AP(\mathbb R,X_0)\to AP(\mathbb R,X_0)$ be the infinitesimal generator of the evolution semigroup $\{\mathbb V(t)\}_{t\ge 0}$.  Then the following statements are valid.
\begin{itemize}
\item[{\rm (i)}] There exists a Hille-Yosida operator $\mathcal B:D(\mathcal B)\subset AP(\mathbb R,X)\to AP(\mathbb R,X)$ such that $\mathcal B_0$ is the part of $\mathcal B$ in $AP(\mathbb R,X_0)$.
\item[{\rm (ii)}]  $(\omega(\Gamma),+\infty)\subset \rho(\mathcal B)$, where $\rho(\mathcal B)$ is the resolvent set of operator $\mathcal B$.
\item[{\rm (iii)}] For any $\phi\in AP(\mathbb R,X)$ and $\lambda>\omega(\Gamma)$, $(\lambda I-\mathcal B)^{-1}(\phi)=\mathcal L_\lambda(\phi)$ and $(\lambda I-\mathcal B)^{-n}(\phi)$ has the following estimate
\begin{equation}\label{3.15}
    \left\| (\lambda I-\mathcal B)^{-n}(\phi)\right\| \le \frac{M_\Gamma^2 M_A}{(\lambda-\omega(\Gamma))^{n}}\|\phi\|,\quad \forall n\ge 1.
\end{equation}
\end{itemize}
\end{thm}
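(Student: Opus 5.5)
The strategy is to build $\mathcal B$ directly from the family of resolvents $\{\mathcal L_\lambda\}_{\lambda>\omega(\Gamma)}$ supplied by Theorem \ref{TH3.12}. The plan is to show that this family is a pseudo-resolvent with trivial kernel, so that it is the resolvent of a unique closed operator. First I would verify the resolvent identity
$$
\mathcal L_\lambda-\mathcal L_\eta=(\eta-\lambda)\mathcal L_\lambda\mathcal L_\eta,\qquad \lambda,\eta>\omega(\Gamma).
$$
For this I would work with the approximations $\mathbb L^\mu_\lambda$ of \eqref{3.10}. I would compute $\mathbb L^\mu_\lambda\mathbb L^\mu_\eta$ by Fubini, together with the cocycle property $\Gamma(t,s)\Gamma(s,r)=\Gamma(t,r)$ and the identity $\int_r^t e^{-\lambda(t-s)}e^{-\eta(s-r)}ds=(e^{-\eta(t-r)}-e^{-\lambda(t-r)})/(\lambda-\eta)$. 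The extra factor $\mu(\mu I-A)^{-1}$ sitting between $\Gamma(t,s)$ and $\Gamma(s,r)$ is harmless: $\mu(\mu I-A)^{-1}x\to x$ for $x\in X_0$, and $\mathcal L_\eta\phi$ takes values in $X_0$. Letting $\mu\to+\infty$, using the uniform convergence from Lemma \ref{LE3.9} and the bound \eqref{3.13}, then gives the identity.

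Next I would check injectivity. If $\mathcal L_\lambda\phi=0$, then by Lemma \ref{LE3.9}(i) (after the shift $\Gamma\mapsto e^{-\lambda(\cdot)}\Gamma$, which is the evolution family of $A-V-\lambda$) the function $0$ is the entire mild solution with forcing $\phi$. The integrated form $0=A\int_{t_0}^t 0\,ds+\int_{t_0}^t\phi(s)ds$ then forces $\phi\equiv0$. Hence $\mathcal B:=\lambda I-\mathcal L_\lambda^{-1}$ on $D(\mathcal B):=\mathcal L_\lambda(AP(\mathbb R,X))$ is well defined, independent of $\lambda$, and closed, with $(\omega(\Gamma),\infty)\subset\rho(\mathcal B)$. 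This gives (ii) and the first part of (iii). Since $\mathcal L_\lambda$ maps into $AP(\mathbb R,X_0)$, we get $D(\mathcal B)\subset AP(\mathbb R,X_0)$.

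For (i) I need to identify $\mathcal B_0$ with the part of $\mathcal B$ in $AP(\mathbb R,X_0)$. For $\phi\in AP(\mathbb R,X_0)$, Lemma \ref{LE3.9}(i) drops the limit, so
$$
\mathcal L_\lambda\phi(t)=\int_0^\infty e^{-\lambda r}\Gamma(t,t-r)\phi(t-r)\,dr=\int_0^\infty e^{-\lambda r}[\mathbb V(r)\phi](t)\,dr .
$$
This is exactly the Laplace transform of the $C_0$-semigroup $\mathbb V$ from Remark \ref{RE3.5}, that is, $(\lambda-\mathcal B_0)^{-1}\phi$ for $\lambda>\omega(\mathbb V)=\omega(\Gamma)$. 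So the resolvents of $\mathcal B_0$ and of the part $\mathcal B_{AP(X_0)}$ of $\mathcal B$ agree on $AP(\mathbb R,X_0)$, and hence $\mathcal B_0=\mathcal B_{AP(X_0)}$.

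The Hille--Yosida estimate \eqref{3.15} is the step I expect to be the main obstacle: iterating \eqref{3.14} only gives $(M_AM_\Gamma)^n$, which is too weak. The route I would take uses the factorization
$$
(\lambda-\mathcal B)^{-n}=(\lambda-\mathcal B_0)^{-(n-1)}\mathcal L_\lambda,
$$
which is valid because $\mathcal L_\lambda$ maps into $AP(\mathbb R,X_0)$. On $AP(\mathbb R,X_0)$ the semigroup bound $\|\mathbb V(t)\|\le M_\Gamma e^{\omega(\Gamma)t}$ gives $\|(\lambda-\mathcal B_0)^{-(n-1)}\|\le M_\Gamma/(\lambda-\omega(\Gamma))^{n-1}$. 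Combining this with \eqref{3.14} yields
$$
\left\|(\lambda-\mathcal B)^{-n}\phi\right\|\le \frac{M_\Gamma^2M_A}{(\lambda-\omega(\Gamma))^n}\|\phi\|,
$$
which is exactly \eqref{3.15} (the $M_\Gamma\ge1$ slack covers $n=1$). This in turn shows that $\mathcal B$ is a Hille--Yosida operator, completing (i).
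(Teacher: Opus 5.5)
Your proof is correct and follows the paper's skeleton. The shared steps are: the resolvent identity for $\mathcal L_\lambda$ via Fubini and the cocycle property (after discarding the inner $\mu(\mu I-A)^{-1}$ because $\mathcal L_\eta\phi$ is $X_0$-valued); the pseudo-resolvent construction of a closed $\mathcal B$ with $\mathcal L_\lambda=(\lambda I-\mathcal B)^{-1}$; and the identification of $\mathcal B_0$ through the Laplace transform of $\{\mathbb V(t)\}_{t\ge0}$. Two sub-steps differ.

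For \eqref{3.15}, the paper proves by induction the kernel formula $\mathcal L_\lambda^n(\phi)(t)=\int_{-\infty}^t\frac{(t-s)^{n-2}}{(n-2)!}e^{-\lambda(t-s)}\Gamma(t,s)\mathcal L_\lambda(\phi)(s)\,ds$ and then integrates the kernel. This is exactly the integral representation of $(\lambda I-\mathcal B_0)^{-(n-1)}\mathcal L_\lambda\phi$, so your factorization is the abstract form of the same computation. Your version is shorter because it reuses the standard resolvent-power bound for a $C_0$-semigroup. However, it needs the identification $(\lambda I-\mathcal B)^{-1}|_{AP(\mathbb R,X_0)}=(\lambda I-\mathcal B_0)^{-1}$ in place first, whereas the paper's Claim is self-contained and comes before that identification.

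For injectivity, the paper cites Lemma \ref{LE3.9}(ii). The bound $\|u\|\le C\|f\|$ only yields the trivial implication $\phi=0\Rightarrow\mathcal L_\lambda(\phi)=0$. Your argument is what actually shows the kernel is trivial: the zero function is the integrated solution of $u'=(A-V-\lambda)u+\phi$, so $\int_{t_0}^t\phi(s)\,ds=0$ for all $t_0<t$, hence $\phi\equiv0$ by continuity. On this point your write-up supplies a justification the paper lacks.
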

\begin{proof}
By replacing $\lambda$ by $\lambda_1$ and $\lambda_2$ in \eqref{3.9}, we can deduce that
\begin{equation}\label{3.16}
    \mathcal L_{\lambda_1}\circ \mathcal L_{\lambda_2}(\phi)(t)=\int_{-\infty}^{t} e^{-\lambda_1(t-s)}\Gamma(t,s)\mathcal L_{\lambda_2}(\phi)(s)ds,\quad \forall \lambda_1,\lambda_2>\omega(\Gamma),t\in \mathbb R.
\end{equation}

We begin with the following Claim.

\textbf{Claim:} $\mathcal L_\lambda^n(\phi)(t)=\int_{-\infty}^t \frac{(t-s)^{n-2}}{(n-2)!} e^{-\lambda(t-s)}\Gamma(t,s) \mathcal L_\lambda (\phi)(s)ds,\forall n\ge 2, \phi\in AP(\mathbb R,X),t\in \mathbb R$.

We prove this \textbf{Claim} by induction. By setting $\lambda_1=\lambda_2$ in \eqref{3.16}, we can deduce that the \textbf{Claim} holds for $n=2$. Assume the \textbf{Claim} is true for some $n\ge 2$. Thus, for each $n\ge 2$, we have
$$
\begin{array}{*{20}{rl}}
\mathcal L_\lambda^{n+1}(\phi)(t)&=\int_{-\infty}^t e^{-\lambda(t-s)}\Gamma(t,s) \mathcal L_\lambda^n(\phi)(s)ds\\
&=\int_{-\infty}^t \int_{-\infty}^{s} \frac{(s-l)^{n-2}}{(n-2)!} e^{-\lambda(t-l)}\Gamma(t,l) \mathcal L_{\lambda} (\phi)(l)dlds\\
&=\int_{-\infty}^t \int_l^t \frac{(s-l)^{n-2}}{(n-2)!} e^{-\lambda(t-l)} \Gamma(t,l) \mathcal L_\lambda(\phi)(l)dsdl\\
&=\int_{-\infty}^t \frac{(t-l)^{n-1}}{(n-1)!} e^{-\lambda(t-l)} \Gamma(t,l) \mathcal L_{\lambda}(\phi)(l)dl.
\end{array}
$$
This implies that the \textbf{Claim} is true. Therefore, by using \eqref{3.14}, we can see
$$
\| \mathcal L_\lambda^n(\phi)\|\le \frac{M_\Gamma}{(\lambda-\omega(\Gamma))^{n-1}}\|\mathcal L_\lambda(\phi)\| \le \frac{M_A M_\Gamma^2}{(\lambda-\omega(\Gamma))^n}\|\phi\|,\quad \forall \lambda>\omega(\Gamma).
$$

Next, we show that $\mathcal L_\lambda$ is a pseudo-resolvent, that is, $\mathcal L_\lambda$ satisfies the resolvent equation
\begin{equation}\label{3.17}
    \mathcal L_{\lambda_1}- \mathcal L_{\lambda_2}=(\lambda_2-\lambda_1) \mathcal L_{\lambda_1} \mathcal L_{\lambda_2}, \quad \forall \lambda_1,\lambda_2>\omega(\Gamma).
\end{equation}
By \eqref{3.16}, we have
\begin{equation}\label{3.18}
    \mathcal L_{\lambda_1}\circ \mathcal L_{\lambda_2}(\phi)(t)=\int_{-\infty}^t e^{-\lambda_1(t-s)}\Gamma(t,s) \lim_{\mu\to \infty}\left( \int_{-\infty}^s e^{-\lambda_2(s-l)} \Gamma(s,l) \mu(\mu I-A)^{-1}\phi(l)dl  \right)ds.
\end{equation}
It follows from Lemma \ref{LE3.9} that the limit in \eqref{3.18} exists uniformly for $t\in \mathbb R$. Then we have
$$
\begin{array}{*{20}{rl}}
\mathcal L_{\lambda_1}\circ \mathcal L_{\lambda_2}(\phi)(t)&=\lim_{\mu\to+\infty}\int_{-\infty}^t \int_{-\infty}^s e^{-\lambda_2(s-l)} e^{-\lambda_1(t-s)}\Gamma(t,l)\mu(\mu I-A)^{-1}\phi(l)dlds\\
&=\lim_{\mu\to +\infty}\int_{-\infty}^t \int_{l}^t e^{-\lambda_2(s-l)} e^{-\lambda_1(t-s)}\Gamma(t,l)\mu(\mu I-A)^{-1}\phi(l)dsdl\\
&=\lim_{\mu\to\infty}\int_{-\infty}^t \frac{e^{-\lambda_2(t-l)}-e^{-\lambda_1(t-l)}}{\lambda_1-\lambda_2} \Gamma(t,l)\mu(\mu I-A)^{-1}\phi(l)dl  \\
&=\frac{1}{\lambda_1-\lambda_2}(\mathcal L_{\lambda_2}(\phi)(t)-\mathcal L_{\lambda_1}(\phi)(t)).
\end{array}
$$
This implies that $\mathcal L_\lambda$ is a pseudo-resolvent. In addition, by Lemma \ref{LE3.9} (ii), we can see
$\mathcal L_\lambda(\phi)=0 \Longleftrightarrow \phi=0$. It follows from \cite[Proposition B.6]{Arendt} that there exists a closed linear operator $\mathcal B$ such that $\mathcal L_\lambda=(\lambda I-\mathcal B)^{-1}$. Thus, statements (ii) and (iii) are true.

Finally, we show statement (i). For any $\phi\in AP(\mathbb R,X_0)$ and $t\in \mathbb R$, we have
$$
\begin{array}{*{20}{rl}}
(\lambda I-\mathcal B)^{-1}(\phi)(t)&= \lim_{\mu\to \infty}\int_{-\infty}^t e^{-\lambda(t-s)}\Gamma(t,s) \mu(\mu I-A)^{-1}\phi(s)ds\\
&=\int_{-\infty}^t e^{-\lambda(t-s)}\Gamma(t,s) \phi(s)ds\\
&=\int_0^\infty e^{-\lambda s} [\mathbb V(s)\phi](t)ds.
\end{array}
$$
Note that $\mathcal B_0$ is the infinitesimal generator of the evolution semigroup $\{\mathbb V(t)\}_{t\ge 0}$. By the integral representation of the resolvent of the evolution semigroup $\{\mathbb V(t)\}_{t\ge 0}$ \cite[Section 1]{Engel}, then we have 
\begin{equation}\label{3.19}
    (\lambda I-\mathcal B)^{-1}(\phi)(t)=\int_0^\infty e^{-\lambda s} [\mathbb V(s)\phi](t)ds=(\lambda I-\mathcal B_0)^{-1}(\phi)(t).
\end{equation}
Then we can deduce $D(\mathcal B_0)\subset D(\mathcal B)$ and $\mathcal B_0(\phi)=\mathcal B(\phi),\forall \phi\in AP(\mathbb R,X_0)$. By Theorem \ref{TH3.12} and $\mathcal L_\lambda=(\lambda I-\mathcal B)^{-1}$, we can obtain that $D(\mathcal B)\subset AP(\mathbb R,X_0)$. Let $\phi\in D(\mathcal B)$ such that $\mathcal B\phi \in AP(\mathbb R,X_0)$. It follows from \eqref{3.19} that
$$
\phi=(\lambda I-\mathcal B)^{-1}(\lambda I-\mathcal B)(\phi)=\lambda(\lambda I-\mathcal B_0)^{-1}(\phi)-(\lambda I-\mathcal B_0)^{-1}\mathcal B(\phi)\in D(\mathcal B_0).
$$
Combining with the definition given by \eqref{2.4}, statement (i) holds.
\end{proof}

Recall that $F(t)\in \mathcal L(X_0,X)$ is positive and almost periodic with respect to $t$, $\mathcal B+F$ can be seen as a bounded positive perturbation of $\mathcal B$. By bounded perturbation Theorem of $C_0-$semigroup (see \cite[Theorem 3.1.3]{Engel}), the part $(\mathcal B+F)_0$ of $\mathcal B+F$ generates a $C_0-$semigroup $\{T_{{(\mathcal B+F)}_0}(t)\}_{t\ge0}$ on $AP(\mathbb R,X_0)$. In addition, by \cite[Corollary 3.1.7]{Engel}, $\{T_{{(\mathcal B+F)}_0}(t)\}_{t\ge0}$ has the following uniquely determined form
\begin{equation}\label{3.20}
    [T_{{(\mathcal B+F)}_0}(t)(\phi)]=[\mathbb V(t)\phi]+\lim_{\lambda\to \infty}\int_0^t [\mathbb V(t-l) \lambda(\lambda I-\mathcal B)^{-1} [FT_{{(\mathcal B+F)}_0}(l)\phi]]dl, \forall t\ge 0,\phi\in AP(\mathbb R,X_0),
\end{equation}
where $[FT_{(\mathcal B+F)_0}(l)\phi](s)=F(s)[T_{(\mathcal B +F)_0}(l)\phi](s),\forall s\in \mathbb R$. In \eqref{3.20}, $(s)$ in $[FT_{(\mathcal B+F)_0}(l)\phi](s)$ is determined by the evolution semigroup $\{\mathbb V(t)\}_{t\ge 0}$, so we omit it.

\begin{thm}\label{TH3.14}
Let Assumptions \ref{ASS2.3}, \ref{ASS2.4} and \ref{ASS3.10} be satisfied. Then strongly continuous semigroups $\{T_{{(\mathcal B+F)}_0}(t)\}_{t\ge0}$ and $\{\mathbb U(t)\}_{t\ge 0}$ coincide in $AP(\mathbb R,X_0)$, that is, for any $t\ge 0$,$s\in \mathbb R$ and $\phi\in AP(\mathbb R,X_0)$, the following expression holds

\begin{equation}\label{3.21}
    [\mathbb U(t)\phi](s)=[T_{{(\mathcal B+F)}_0}(t)\phi](s).
\end{equation}
\end{thm}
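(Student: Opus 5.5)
The plan is to show that $\mathbb U$ satisfies the same variation of constants identity \eqref{3.20} that characterizes $T_{(\mathcal B+F)_0}$ uniquely (\cite[Corollary 3.1.7]{Engel}). Alternatively, one can show that both are fixed points of the same contraction on $C([0,T],\mathcal L(AP(\mathbb R,X_0)))$.

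\textbf{Step 1: the identity for $\mathbb U$ on $X_0$.} Fix $\phi\in AP(\mathbb R,X_0)$, $t\ge0$ and $s\in\mathbb R$. Apply Theorem \ref{TH2.6}, formula \eqref{2.9}, with initial time $t_0=s-t$ and initial value $u_0=\phi(s-t)$:
$$
[\mathbb U(t)\phi](s)=\Gamma(s,s-t)\phi(s-t)+\lim_{\mu\to\infty}\int_{s-t}^{s}\Gamma(s,r)\mu(\mu I-A)^{-1}F(r)U(r,s-t)\phi(s-t)\,dr .
$$
Substitute $r=s-t+l$ with $l\in[0,t]$. Then $U(r,s-t)\phi(s-t)=[\mathbb U(l)\phi](r)$ and $\Gamma(s,r)=\Gamma(s,s-(t-l))$. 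This gives
$$
[\mathbb U(t)\phi](s)=[\mathbb V(t)\phi](s)+\lim_{\mu\to\infty}\int_0^t\Big[\mathbb V(t-l)\,\mu(\mu I-A)^{-1}\big(F\,\mathbb U(l)\phi\big)\Big](s)\,dl,
$$
where $\mathbb V$ is extended formally to $X$-valued functions by the same formula. The limit is uniform in $s$ for fixed $t$: \eqref{2.9} is uniform on compacts, and the parameters are only translated in time. To handle this rigorously, I would use the bound $\sup_{r}\|\mathbb U(l)\phi(r)\|\le Me^{\omega l}\|\phi\|$ together with the translation-invariant estimate from \cite[Theorem 1.11]{Magal21}, which depends only on $M_A,\omega_A$ and the bounds on $V$.

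\textbf{Step 2: matching the two integral terms.} This is the main obstacle. I must show that
$$
\lim_{\lambda\to\infty}\int_0^t\mathbb V(t-l)\lambda(\lambda I-\mathcal B)^{-1}\psi(l)\,dl=\lim_{\mu\to\infty}\int_0^t\mathbb V(t-l)\mu(\mu I-A)^{-1}\psi(l)\,dl
$$
for continuous $\psi:[0,t]\to AP(\mathbb R,X)$, here $\psi(l)=F\,\mathbb U(l)\phi$. Both sides are the integrated (mild) solution of $w'=\mathcal Bw+\psi$, respectively pointwise in $s$ of $w'=(A-V)w+\psi$, with $w(0)=0$.

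I would first prove the identity for $\psi(l)\in AP(\mathbb R,X_0)$, which suffices for a dense set. In that case both limits reduce to $\int_0^t\mathbb V(t-l)\psi(l)\,dl$. For the right-hand side this follows from $\mu(\mu I-A)^{-1}x\to x$ on $X_0$. For the left-hand side it follows from $\lambda(\lambda I-\mathcal B_0)^{-1}\to I$ strongly, since $\mathcal B_0$ generates $\mathbb V$.

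For general $X$-valued $\psi$, I would use $\mathcal L_\lambda=(\lambda I-\mathcal B)^{-1}$ from Theorem \ref{TH3.13}(iii). Writing $\lambda\mathcal L_\lambda\psi$ via \eqref{3.9} gives a double limit in $\lambda$ and $\mu$. The aim is to show that $\int_0^t\mathbb V(t-l)\lambda\mathcal L_\lambda\psi(l)\,dl$ equals the $(A-V)$-mild solution with source $\lambda\mathcal L_\lambda\psi$, and that $\lambda\mathcal L_\lambda\psi\to$ the source in the extrapolated sense. Concretely, I would check that both sides satisfy the integrated identity
$$
w(t)=\mathcal B\!\int_0^t w+\int_0^t\psi
$$
in the sense that $\int_0^tw\in D(\mathcal B)$. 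Since $\mathcal B$ is Hille--Yosida (Theorem \ref{TH3.13}(i),(iii)), uniqueness of integrated solutions (\cite[Theorem 1.5]{Magal21} or \cite{Thieme90}) then gives equality. To verify that the pointwise $(A-V)$ integrated solution lies in $D(\mathcal B)$, I would apply $(\lambda I-\mathcal B)^{-1}=\mathcal L_\lambda$ and use Fubini with the resolvent identity, as in the proof of \eqref{3.17}.

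\textbf{Step 3: uniqueness.} After Step 2, $\mathbb U$ satisfies \eqref{3.20} with $\mathbb U$ in place of $T_{(\mathcal B+F)_0}$. Let $D(t)=\mathbb U(t)\phi-T_{(\mathcal B+F)_0}(t)\phi$. Then
$$
D(t)=\lim_{\lambda\to\infty}\int_0^t\mathbb V(t-l)\lambda(\lambda I-\mathcal B)^{-1}F D(l)\,dl.
$$
The Hille--Yosida estimate \eqref{3.15} gives $\|D(t)\|\le M_\Gamma^2M_A\|F\|_\infty\int_0^te^{\omega(\Gamma)(t-l)}\|D(l)\|\,dl$. Gronwall's inequality then yields $D\equiv0$, which proves \eqref{3.21}.
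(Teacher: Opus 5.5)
Your skeleton matches the paper's. Step 1 is exactly \eqref{3.22}. The theorem then reduces to converting the $\mu(\mu I-A)^{-1}$-regularized convolution into the $\lambda(\lambda I-\mathcal B)^{-1}$-regularized one, i.e.\ to \eqref{3.24}, followed by uniqueness for \eqref{3.20}. Your Step 3 is fine and is actually more careful than the paper. The paper simply invokes the uniqueness in Engel--Nagel's Corollary 3.1.7, even though $F$ maps $AP(\mathbb R,X_0)$ into $AP(\mathbb R,X)$. Your Gronwall argument, based on the Kellermann--Hieber bound for the Hille--Yosida operator $\mathcal B$ with the constants of \eqref{3.15}, settles that point properly.

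The gap is Step 2, which carries the whole content of the theorem.

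\textbf{The density reduction fails.} $X_0$ is a closed proper subspace of $X$, so $AP(\mathbb R,X_0)$ is closed and \emph{not} dense in $AP(\mathbb R,X)$. Moreover, in all the applications $F\mathbb U(l)\phi$ takes values in $X_1$ with $X_1\cap X_0=\{0\}$. So the $X_0$-valued case says nothing about your $\psi$.

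\textbf{The ``extrapolated'' convergence does not help.} Using $\lambda\mathcal L_\lambda\psi\to\psi$ in an extrapolated norm would require the $(A-V)$ mild-solution map to be continuous with respect to that norm of the source. You have no such estimate: the available bound uses the $X$-norm, and in that norm $\lambda\mathcal L_\lambda\psi\not\to\psi$.

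\textbf{The integrated-solution route needs one unproved identity.} It can be completed, but only after proving what you leave as ``Fubini with the resolvent identity'':
\[
\mathcal L_\lambda\Big(\lim_{\mu\to\infty}\int_0^t\mathbb V(t-l)\mu(\mu I-A)^{-1}\psi(l)\,dl\Big)=\int_0^t\mathbb V(t-l)\mathcal L_\lambda\psi(l)\,dl .
\]
To get it, insert \eqref{3.9} and use the cocycle property of $\Gamma$. Then interchange the $r$- and $l$-integrals, shift the $r$-variable by $t-l$, and split $\Gamma(s,r)=\Gamma(s,s-t+l)\Gamma(s-t+l,r)$. This is precisely the paper's computation of $h(t,s)$.

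Once you have this identity, the detour through integrated solutions and their uniqueness is unnecessary. The left side is $\mathcal L_\lambda g(t,\cdot)$ with $g(t,\cdot)=\mathbb U(t)\phi-\mathbb V(t)\phi\in AP(\mathbb R,X_0)=\overline{D(\mathcal B)}$, by Theorems \ref{TH3.12} and \ref{TH3.13}. Multiplying by $\lambda$ and letting $\lambda\to\infty$ gives \eqref{3.24} immediately, since $\mathcal B$ is Hille--Yosida. What your Step 2 lacks is this commutation identity together with the observation $g(t,\cdot)\in\overline{D(\mathcal B)}$; without them it is only a plan.
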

\begin{proof}
By Theorem \ref{TH2.6}, we have 
$$
U(s,s-t)x =\Gamma(s,s-t)x+\lim_{\mu\to \infty} \int_{s-t}^s \Gamma (s,l)\mu(\mu I-A)^{-1} F(l) U(l,s-t)xdl,\quad \forall s\in \mathbb R,t\ge 0,x\in X_0.
$$
By using the evolution semigroups $\{\mathbb U(t)\}_{t\ge 0}$ and $\{\mathbb V(t)\}_{t\ge 0}$, we have, for any $\phi\in AP(\mathbb R,X_0)$,    
\begin{equation}\label{3.22}
    \begin{array}{*{20}{rl}}
&\quad[\mathbb U(t)\phi] (s)\\
&=[\mathbb V(t)\phi](s)+\lim_{\mu\to \infty}\int_{s-t}^s \Gamma(s,l)\mu (\mu I-A)^{-1} F(l) U(l,s-t)\phi(s-t)dl\\
& =[\mathbb V(t)\phi](s)+\mathop{\lim}_{\mu\to \infty}\int_0^t \Gamma(s,s-t+l)\mu (\mu I-A)^{-1} F(s-t+l) U(s-t+l,s-t)\phi(s-t)dl\\
&=[\mathbb V(t)\phi](s)+\mathop{\lim}_{\mu\to \infty}\int_0^t \Gamma(s,s-t+l)\mu (\mu I-A)^{-1} F(s-t+l) [\mathbb U(l)\phi](s-t+l)dl.
\end{array}
\end{equation}
Let $g(t,s)$ be defined as follows
\begin{equation}\label{3.23}
g(t,s):=\mathop{\lim}_{\mu\to \infty}\int_0^t \Gamma(s,s-t+l)\mu (\mu I-A)^{-1} F(s-t+l) [\mathbb U(l)\phi](s-t+l)dl.
\end{equation}
Due to $\phi\in AP(\mathbb R,X_0)$, we have $g(t,s)\in X_0$. For any $\lambda >\omega(\Gamma)$,by Theorem \ref{TH3.13}, we have
$$
h(t,s):=(\lambda I-\mathcal B)^{-1}(g(t,\cdot))(s)=\int_{-\infty}^s e^{-\lambda(s-r)} \Gamma(s,r)g(t,r)dr,\quad \forall s\in\mathbb R.
$$
Then we have
$$
\begin{array}{*{20}{rl}}
h(t,s)&=\lim_{\mu\to \infty}\int_{-\infty}^{s}\int_0^t e^{-\lambda(s-r)}\Gamma(s,r-t+l) \mu (\mu I-A)^{-1} F(r-t+l) [\mathbb U(l) \phi](r-t+l)dldr  \\
&=\lim_{\mu\to \infty}\int_0^t\int_{-\infty}^{s} e^{-\lambda(s-r)}\Gamma(s,r-t+l) \mu (\mu I-A)^{-1} F(r-t+l) [\mathbb U(l) \phi](r-t+l)drdl\\
&= \lim_{\mu\to \infty} \int_0^t \int_{-\infty}^{s-t+l} e^{-\lambda(s-t+l-r)} \Gamma(s,r) \mu (\mu I-A)^{-1} F(r) [\mathbb U(l)\phi](r)drdl\\
&= \lim_{\mu\to \infty} \int_0^t \Gamma(s,s-t+l) \int_{-\infty}^{s-t+l} e^{-\lambda(s-t+l-r)} \Gamma(s-t+l,r) \mu(\mu I-A)^{-1} F(r)[\mathbb U(l)\phi](r)drdl\\
&=\int_0^t \Gamma(s,s-t+l) (\lambda I-\mathcal B)^{-1}F(s-t+l) [\mathbb U(l)\phi](s-t+l)dl\\
&=\int_0^t [\mathbb V(t-l)(\lambda I-\mathcal B)^{-1} [F\mathbb U(l)\phi]](s)dl.
\end{array}
$$
By the definition of $g$ and $h$, we can see
$$
\lim_{\lambda\to+\infty} \lambda h(t,s)=\lim_{\lambda\to +\infty}\lambda(\lambda I-\mathcal B)^{-1}(g(t,\cdot))(s).
$$
Since $g(t,s)\in X_0$, we have 
$$
g(t,s)=\lim_{\lambda\to +\infty}\lambda(\lambda I-\mathcal B)^{-1}(g(t,\cdot))(s)=\lim_{\lambda\to +\infty}\int_0^t [\mathbb V(t-l)\lambda(\lambda I-\mathcal B)^{-1} [F\mathbb U(l)\phi]](s)dl.
$$
By \eqref{3.22} and \eqref{3.23}, we have 
\begin{equation}\label{3.24}
    [\mathbb U(t)\phi] (s)=[\mathbb V(t)\phi](s)+\lim_{\lambda\to +\infty}\int_0^t [\mathbb V(t-l)\lambda(\lambda I-\mathcal B)^{-1} [F\mathbb U(l)\phi]](s)dl.
\end{equation}
Recall that \eqref{3.20} is the uniquely determined expression of semigroup $\{T_{{(\mathcal B+F)}_0}(t)\}_{t\ge0}$. In addition, we can find that \eqref{3.20} has the same form as \eqref{3.24}. This implies that  semigroups $\{T_{{(\mathcal B+F)}_0}(t)\}_{t\ge0}$ and $\{\mathbb U(t)\}_{t\ge 0}$ coincide in $AP(\mathbb R,X_0)$.
\end{proof}
\begin{remk}\label{RE3.15}
Since the strongly continuous semigroups $\{T_{{(\mathcal B+F)}_0}(t)\}_{t\ge0}$ and $\{\mathbb U(t)\}_{t\ge 0}$ coincide in $AP(\mathbb R,X_0)$, we can use the part $(\mathcal B+F)_0$ of $\mathcal B+F$ as the  infinitesimal generator of the evolution semigroup $\{\mathbb U(t)\}_{t\ge 0}$. In fact, if we consider a specific model, the above infinitesimal generators can be directly calculated without using the above theorem, such as the almost periodic population model with age structure \cite{Huo25}.
\end{remk}

\subsection{Basic reproduction ratio}
\indent\indent In this subsection, we establish the theory of the basic reproduction ratio $R_0$ for the almost periodic non-densely defined Cauchy problem \eqref{2.3}. 

We define an integral operator $\mathscr L$ on $AP(\mathbb R,X)$ by
$$
\mathscr L(\phi)(t):=F(t)\lim_{\mu\to +\infty} \int_{-\infty}^t \Gamma(t,s)\mu(\mu I-A)^{-1}\phi(s)ds,\quad \forall t\in \mathbb R,\phi\in AP(\mathbb R,X).
$$
\begin{lem}\label{LE3.16}
Let Assumptions \ref{ASS2.3}, \ref{ASS2.4} and \ref{ASS3.10} be satisfied. Then $\mathscr L$ is a linear positive operator from $AP(\mathbb R,X)$ to $AP(\mathbb R,X)$.
\end{lem}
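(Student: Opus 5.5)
The plan is to factor $\mathscr L$ as a composition of two maps whose properties are already known:
$$\mathscr L=\mathcal F\circ\mathcal L_0,$$
where $\mathcal L_0$ is the operator $\mathcal L_\lambda$ of \eqref{3.9} with $\lambda=0$, and $\mathcal F$ denotes multiplication by $F$, namely $(\mathcal F\psi)(t)=F(t)\psi(t)$ for $\psi\in AP(\mathbb R,X_0)$. This choice of $\lambda$ is admissible because Assumption \ref{ASS2.4} gives $\omega(\Gamma)<0$, so $\lambda=0>\omega(\Gamma)$. Comparing the definitions directly, $\mathscr L(\phi)(t)=F(t)\,\mathcal L_0(\phi)(t)$ for every $t$.

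By Theorem \ref{TH3.12}, $\mathcal L_0$ is a positive bounded linear operator from $AP(\mathbb R,X)$ to $AP(\mathbb R,X_0)$. Linearity of $\mathscr L$ then follows from linearity of $\mathcal L_0$ and of each $F(t)$.

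Positivity also follows from the factorization. If $\phi\in AP(\mathbb R,X)$ with $\phi(t)\in X_+$ for all $t$, then $\mathcal L_0(\phi)(t)\in X_0^+$ for all $t$. Assumption \ref{ASS2.3}(iii) then gives $F(t)\mathcal L_0(\phi)(t)\in X_+$. As a by-product, the estimate \eqref{3.14} yields boundedness: $\|\mathscr L\phi\|\le \sup_t\|F(t)\|\,M_AM_\Gamma\,|\omega(\Gamma)|^{-1}\|\phi\|$.

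The remaining step, which is the only one with real content, is to show that $\mathcal F$ maps $AP(\mathbb R,X_0)$ into $AP(\mathbb R,X)$.
\begin{itemize}
\item[(a)] Continuity. Take $\psi\in AP(\mathbb R,X_0)$. The map $t\mapsto F(t)\psi(t)$ is continuous, since $F$ is continuous in operator norm and $\psi$ is continuous.
\item[(b)] Common almost periods. By the same argument used in Lemma \ref{LE3.2} (via \cite[Theorem 1]{Fink69}), the pair $t\mapsto(F(t),\psi(t))$ is almost periodic. Hence for every $\delta>0$ the set $\mathcal T(F,\delta)\cap\mathcal T(\psi,\delta)$ is relatively dense.
\item[(c)] The estimate. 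For $\tau$ in this set,
$$\|F(t+\tau)\psi(t+\tau)-F(t)\psi(t)\|\le \|F(t+\tau)-F(t)\|\,\|\psi\|+\sup_s\|F(s)\|\,\|\psi(t+\tau)-\psi(t)\|\le \delta(\|\psi\|+K_*),$$
where $K_*$ bounds $\|F(s)\|$; such a bound exists because almost periodic functions are bounded.
\item[(d)] Conclusion. Choosing $\delta=\epsi/(\|\psi\|+K_*+1)$ shows that $\mathcal T(F\psi,\epsi)$ contains a relatively dense set. Therefore $F\psi\in AP(\mathbb R,X)$.
\end{itemize}
Combining this with $\mathcal L_0(\phi)\in AP(\mathbb R,X_0)$ gives $\mathscr L(\phi)\in AP(\mathbb R,X)$, which completes the plan.

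I expect no serious obstacle. The one point that needs care is the common relatively dense set of $\delta$-almost periods in step (b). I would invoke the Bochner-type characterization, or \cite{Fink69}, exactly as in Lemma \ref{LE3.2}, rather than intersecting the sets naively.
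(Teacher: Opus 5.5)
Your proposal is correct and is essentially the paper's argument. The paper's two-line proof also rests on the almost periodicity of $t\mapsto F(t)\psi(t)$ together with the reasoning of Theorem \ref{TH3.12}, and you make this explicit by writing $\mathscr L(\phi)(t)=F(t)\,\mathcal L_0(\phi)(t)$ with $\lambda=0>\omega(\Gamma)$, invoking Theorem \ref{TH3.12} directly rather than re-running its proof, and handling the common $\delta$-almost periods through almost periodicity of the pair $(F,\psi)$ rather than by naively intersecting the sets.
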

\begin{proof}
Since $F(t)\in \mathcal L(X_0,X)$ is almost periodic, $F(t)\phi(t)$ is also almost periodic on $X$. By a similar proof as Theorem \ref{TH3.12}, we can deduce that $\mathscr L$ is a linear positive operator from $AP(\mathbb R,X)$ to $AP(\mathbb R,X)$.
\end{proof}

{ In the theory of population dynamics, $\mathscr L$ is the \textbf{next generation operator} of the almost periodic non-densely defined Cauchy problem \eqref{2.3}. Therefore, we can define the basic reproduction ratio by
$$
R_0=r(\mathscr L),
$$
where $r(\mathscr L)$ denotes the spectral radius of $\mathscr L$.
}

\begin{thm}\label{TH3.17}
Let Assumptions \ref{ASS2.3}, \ref{ASS2.4} and \ref{ASS3.10} be satisfied. Then $R_0-1$ has the same sign as $\omega(\mathbb U)$, where $\omega(\mathbb U)$ is the exponential growth bound of the evolution semigroup $\{\mathbb U(t)\}_{t\ge 0}$.
\end{thm}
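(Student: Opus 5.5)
The plan is to reduce the statement to Thieme's abstract threshold theorem relating the spectral bound of a resolvent-positive operator perturbed by a positive operator to the spectral radius of the associated next generation operator \cite[Theorem 3.5]{Thieme09}. The setting is the Banach space $AP(\mathbb R,X)$, ordered by the normal, generating cone of functions with values in $X_+$. The relevant operators are the Hille--Yosida operator $\mathcal B$ from Theorem \ref{TH3.13} and the positive bounded multiplication operator $F$, given by $(F\phi)(t)=F(t)\phi(t)$ from $AP(\mathbb R,X_0)$ to $AP(\mathbb R,X)$.

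\textbf{Step 1: identify the next generation operator.} By Theorem \ref{TH3.13}(iii) with $\lambda=0$, which is allowed because $\omega(\Gamma)<0$ under Assumption \ref{ASS2.4}, we have $(-\mathcal B)^{-1}=\mathcal L_0$. Comparing with the definition of $\mathscr L$ gives $\mathscr L=F\mathcal L_0=-F\mathcal B^{-1}$, so $R_0=r(-F\mathcal B^{-1})$. The operator $\mathcal B$ is resolvent positive, since $\mathcal L_\lambda$ is positive by Theorem \ref{TH3.12}. Its spectral bound satisfies $s(\mathcal B)\le\omega(\Gamma)<0$ by Theorem \ref{TH3.13}(ii).

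\textbf{Step 2: relate $\omega(\mathbb U)$ to a spectral bound.} By Theorem \ref{TH3.14}, $\mathbb U$ is generated by $(\mathcal B+F)_0$. I would first show that $\mathcal B+F$ is resolvent positive on $AP(\mathbb R,X)$. For large $\lambda$, the Neumann series
\[
(\lambda-\mathcal B-F)^{-1}=\sum_{k\ge0}\big((\lambda-\mathcal B)^{-1}F\big)^k(\lambda-\mathcal B)^{-1}
\]
converges and every term is positive. Moreover, $\rho(\mathcal B+F)=\rho((\mathcal B+F)_0)$ and $s(\mathcal B+F)=s((\mathcal B+F)_0)$, by the part-of-operator lemma \cite[Lemmas 2.1 and 2.2]{Magal09MAMS}, because $\mathcal B+F$ is again Hille--Yosida as a bounded perturbation. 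Next I need $s((\mathcal B+F)_0)=\omega(\mathbb U)$ for the positive $C_0$-semigroup $\mathbb U$. Here I would use an evolution semigroup spectral mapping argument. Since $\mathbb U$ commutes with translations, the spectrum of its generator is invariant under shifts by $i\mathbb R$. Together with the weak spectral mapping/Datko-type results for evolution semigroups, this gives $\omega(\mathbb U)=s((\mathcal B+F)_0)$, in the spirit of \cite[Theorem 3.23]{Chicone} and \cite[Lemma B.1]{Thieme09}. Alternatively, one can cite the positivity result $s=\omega$ for positive semigroups on spaces where it applies. By Proposition \ref{PR3.6}, $\omega(\mathbb U)=\omega(U)$ as well.

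\textbf{Step 3: apply Thieme's theorem.} \cite[Theorem 3.5]{Thieme09} states the following. Let $\mathcal B$ be resolvent positive with $s(\mathcal B)<0$, let $\mathcal B+F$ be resolvent positive, and let $F$ be positive. Then $s(\mathcal B+F)$ and $r(-F\mathcal B^{-1})-1$ have the same sign. The hypotheses hold by Steps 1 and 2, so $\operatorname{sign}(R_0-1)=\operatorname{sign}s(\mathcal B+F)=\operatorname{sign}\omega(\mathbb U)$.

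The main obstacle is Step 2, the equality $s((\mathcal B+F)_0)=\omega(\mathbb U)$. In general, $s\le\omega$ may be strict for positive semigroups on spaces like $AP(\mathbb R,X_0)$, which is neither an $L^p$ space nor an AM-space with unit in general. My first attempt would exploit the translation invariance of the evolution semigroup. Spectra of evolution semigroups are invariant under vertical shifts, and the Datko--van Neerven-type characterization \cite[Theorem 3.23]{Chicone} applies. That characterization says $\omega<0$ if and only if $\lambda-\mathcal G$ is invertible for all $\Re\lambda\ge0$, where $\mathcal G$ denotes the generator. To implement this I would show that $\omega(\mathbb U)<0$ exactly when $(\mathcal B+F)_0$ has the resolvent $\lambda\mapsto(\lambda-(\mathcal B+F)_0)^{-1}$, given by a convergent integral, for all $\lambda\ge0$. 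Positivity then transfers real-axis invertibility to the whole right half-plane. If this fails, only the sign information is needed. A fallback is to prove directly that $s(\mathcal B+F)<0$ implies $\omega(\mathbb U)<0$, using the Neumann series of $\mathcal L_0F$ and Lemma \ref{LE3.9}.
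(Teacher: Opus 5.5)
Your plan has the same architecture as the paper's proof: you get $\mathscr L=-F\mathcal B^{-1}$ from Theorem~\ref{TH3.13} at $\lambda=0$, transfer $s(\mathcal B+F)=s((\mathcal B+F)_0)$ via the Magal--Ruan part lemmas, identify $s((\mathcal B+F)_0)$ with $\omega(\mathbb U)$, and conclude with Thieme's sign theorem. Your local variations are sound and somewhat more self-contained than the paper's. You obtain $s(\mathcal B)\le\omega(\Gamma)<0$ directly from Theorem~\ref{TH3.13}(ii) and resolvent positivity, using that $s(\mathcal B)\in\sigma(\mathcal B)$ for resolvent positive operators; the paper instead goes through $s(\mathcal B)=\omega(\mathbb V)$. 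You also obtain resolvent positivity of $\mathcal B+F$ by a Neumann series, whereas the paper infers it from positivity of $\mathbb U$.

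The step you flag as the main obstacle is where the proposal is incomplete, and the paper closes it with a concrete result. It cites the spectral mapping theorem for almost periodic evolution semigroups on $AP(\mathbb R,X_0)$ \cite[Theorem 3.6]{Hutter}, namely $\sigma(\mathbb U(t))\setminus\{0\}=e^{t\sigma((\mathcal B+F)_0)}$. This gives $r(\mathbb U(t))=e^{t\,s((\mathcal B+F)_0)}$, and comparison with $r(\mathbb U(t))=e^{t\,\omega(\mathbb U)}$ yields the equality at once, so no $i\mathbb R$-invariance or Datko argument is needed.

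Your justification of this step has three problems.
\begin{itemize}
\item $\mathbb U$ does not commute with translations unless $U(t+h,s+h)=U(t,s)$. The $i\mathbb R$-invariance of the spectrum comes instead from the modulation identity $\mathbb U(t)(e^{i\xi\cdot}\phi)=e^{-i\xi t}e^{i\xi\cdot}\,\mathbb U(t)\phi$.
\item The results you cite (Chicone--Latushkin's Theorem 3.23 and Thieme's Lemma B.1) give $\omega(\mathbb U)=\omega(U)$, i.e.\ Proposition~\ref{PR3.6}. They do not give $s=\omega$ for the generator on $AP$.
\item The Neumann-series fallback falls short. From $r(\mathcal L_0F)<1$ you only get that $u'=(A-V+F)u+f$ has an almost periodic mild solution for every $f$. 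Passing from this to $\omega(\mathbb U)<0$ is exactly a Perron/Datko-type theorem on $AP$, which you would still have to supply.
\end{itemize}

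With the spectral mapping theorem inserted in Step 2, your argument is complete and coincides with the paper's.
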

\begin{proof}
Note that Banach space $X_0$ admits a normal, generating and positive cone $X^+_0$,  then we can obtain that $AP(\mathbb R,X_0)$ is also a Banach space with a normal, generating and positive cone $AP(\mathbb R,X^+_0)$. By Theorem \ref{TH3.13}, we know that $\mathcal B_0$ is the infinitesimal generator of the evolution semigroup $\{\mathbb V(t)\}_{t\ge0}$. In addition, it follows from Theorem \ref{TH3.14} that $(\mathcal B+F)_0$ is the infinitesimal generator of the evolution semigroup $\{\mathbb U(t)\}_{t\ge0}$. By Theorem \ref{TH3.4} and Remark \ref{RE3.5}, we know that $\{\mathbb U(t)\}_{t\ge0}$ and $\{\mathbb V(t)\}_{t\ge0}$ are positive $C_0-$semigroups on $AP(\mathbb R,X_0)$. According to \cite[Theorem 3.12]{Thieme09}, $\mathcal B_0$ and $(\mathcal B+F)_0$ are resolvent positive operators. By using the spectral mapping theorem for the almost periodic evolution semigroups in Banach space \cite[Theorem 3.6]{Hutter}, we have 
\begin{equation}\label{3.25}
    \sigma(\mathbb U(t))\setminus\{0\}=e^{\sigma((\mathcal B+F)_0)t}\text{ and  } \sigma(\mathbb V(t))\setminus\{0\}=e^{\sigma(\mathcal B_0)t}.
\end{equation}
where $\sigma(\cdot)$ denotes the spectrum. Then we have $r(\mathbb U(t))=e^{s((\mathcal B+F)_0)t}$ and $r(\mathbb V(t))=e^{s(\mathcal B_0)t}$, where $s(\cdot)$ denotes the spectral bound.  Based on the property of $C_0-$semigroups \cite[Proposition 6.2.2]{Engel}, we know that 
\begin{equation}\label{3.26}
    r(\mathbb U(t))=e^{\omega(\mathbb U)t}\text{ and  } r(\mathbb V(t))=e^{\omega(\mathbb V)t},
\end{equation}
Thus, we can find $s((\mathcal B+F)_0)=\omega(\mathbb U)$ and $s(\mathcal B_0)=\omega(\mathbb V)<0$.  According to \cite[Lemma 2.1 and Lemma 2.2]{Magal09MAMS}, we can deduce that $\sigma((\mathcal B+F)_0)=\sigma(\mathcal B+F)$ and  $\sigma(\mathcal B_0)=\sigma(\mathcal B)$. Thus,  $s(\mathcal B+F)=s((\mathcal B+F)_0)=\omega(\mathbb U)$ and $s(\mathcal B)=s(\mathcal B_0)=\omega(\mathbb V)<0$. 

Note that $F$ is positive and $\mathcal B$ is a resolvent positive operator with $s(\mathcal B)<0$, it follows from \cite[Lemma 5.8]{Thieme09} that $r(-F\mathcal B^{-1})-1$ has the same sign as $s(\mathcal B+F)$. By Theorem \ref{TH3.13} and setting $\lambda=0$, we can obtain
$$
(-\mathcal B^{-1}\phi)(t)=  \lim_{\mu\to +\infty}\int_{-\infty}^t \Gamma(t,s) \mu(\mu I-A)^{-1}\phi(s)ds,\quad t\in\mathbb R, \phi\in AP(\mathbb R,X).
$$
Then $-F\mathcal B^{-1}$ has the following form
\begin{equation}\label{3.27}
(-F\mathcal B^{-1}\phi)(t)= F(t) \lim_{\mu\to +\infty}\int_{-\infty}^t \Gamma(t,s) \mu(\mu I-A)^{-1}\phi(s)ds,\quad t\in\mathbb R, \phi\in AP(\mathbb R,X).
\end{equation}
It is easy to see that $\mathscr L=-F\mathcal B^{-1}$. Since $s(\mathcal B+F)=\omega(\mathbb U)$ and $r(-F\mathcal B^{-1})-1$ has the same sign as $s(\mathcal B+F)$,  we can deduce that $R_0-1$ has the same sign as $\omega(\mathbb U)$.
\end{proof}

{
\begin{remk}
Under Assumptions \ref{ASS2.3}, \ref{ASS2.4} and \ref{ASS3.10}, we can see that $\mathscr L$ is a positive operator and hence $r(\mathscr L)\ge 0$. In general, the strict positivity of $r(\mathscr L)$ is not guaranteed under the above assumptions, and it is not required for Theorem \ref{TH3.17}. The strict positivity of $R_0$ can be obtained under stronger
positivity assumptions. For example, if we additionally assume
\begin{itemize}
\item[{\rm (i)}] $X_+$ and $X_0^+$ have nonempty interiors.
\item[{\rm (ii)}] $-\mathcal B^{-1}$ is strongly positive from $AP(\mathbb R,X)$ to $AP(\mathbb R,X_0)$.
\item[{\rm (iii)}] $F(t)$ is strongly positive from $AP(\mathbb R,X_0)$ to
$AP(\mathbb R,X)$.
\end{itemize}
Then the next generation operator $\mathscr L$ is strongly positive and hence $r(\mathscr L)>0$. In concrete applications, the strong positivity of $-\mathcal B^{-1}$ can often be verified from the integral representation of $-\mathcal B^{-1}$ and suitable positive properties of the evolution family
$\{\Gamma(t,s)\}_{t\geq s}$.
\end{remk}
}

By Theorem \ref{TH3.17}, we can directly obtain the following corollary.
\begin{coro}\label{CO3.18}
Let Assumptions \ref{ASS2.3}, \ref{ASS2.4} and \ref{ASS3.10} be satisfied. Then the following statements are valid.
\begin{itemize}
\item[{\rm (i)}] If $R_0<1$, then $\omega(\mathbb U)<0$.
\item[{\rm (ii)}]  If $R_0=1$, then $\omega(\mathbb U)=0$.
\item[{\rm (iii)}]  If $R_0>1$, then $\omega(\mathbb U)>0$.
\end{itemize}    
\end{coro}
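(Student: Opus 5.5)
This corollary is a direct reading of Theorem \ref{TH3.17}, so the plan is simply to unpack the phrase ``$R_0-1$ has the same sign as $\omega(\mathbb U)$.'' Concretely, I would interpret it as the sign identity $\operatorname{sign}(R_0-1)=\operatorname{sign}(\omega(\mathbb U))$, with the sign function taking values in $\{-1,0,1\}$. Then I would read off the three cases.

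In case (i), $R_0<1$ gives $\operatorname{sign}(R_0-1)=-1$, hence $\omega(\mathbb U)<0$. In case (ii), $R_0=1$ forces $\omega(\mathbb U)=0$. In case (iii), $R_0>1$ gives $\omega(\mathbb U)>0$. No further estimates are required, since Assumptions \ref{ASS2.3}, \ref{ASS2.4} and \ref{ASS3.10} are exactly the hypotheses of Theorem \ref{TH3.17}.

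The only point worth checking is that the sign statement in Theorem \ref{TH3.17} really includes the zero case, that is, $R_0=1$ if and only if $s(\mathcal B+F)=0$. The proof of Theorem \ref{TH3.17} obtains this from \cite[Lemma 5.8]{Thieme09}, which gives the full trichotomy, combined with the identity $s(\mathcal B+F)=\omega(\mathbb U)$. So this is the step I would verify, and I expect no obstacle there.

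Optionally, I would add the remark that Proposition \ref{PR3.6} lets us replace $\omega(\mathbb U)$ by $\omega(U)$. This yields the same trichotomy for the evolution family $\{U(t,s)\}_{t\ge s}$ of \eqref{2.3}, for instance exponential decay of all solutions when $R_0<1$.
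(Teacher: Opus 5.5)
Your proposal is correct and matches the paper, which obtains the corollary directly from Theorem~\ref{TH3.17}. The sign identity $\operatorname{sign}(R_0-1)=\operatorname{sign}(\omega(\mathbb U))$, which includes the zero case through \cite[Lemma 5.8]{Thieme09} and $s(\mathcal B+F)=\omega(\mathbb U)$, yields the three cases immediately.
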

\begin{remk}\label{RE3.19}
In the proof of Theorem \ref{TH3.17}, we can see that $R_0$ is equal to $r(-F\mathcal B^{-1})$, where $F$ and $\mathcal B$ are non-densely defined operators. This implies that $R_0$ of the non-densely defined Cauchy problems can be defined as
the same form as ODE, reaction-diffusion systems and nonlocal diffusion systems.
It is worth mentioning that Corollary \ref{CO3.18} is an important property and satisfied in many epidemic models, for example, time delay epidemic models\cite{Zhao17,Liang19}, reaction-diffusion epidemic models\cite{Wang12,Zhang21} and almost periodic epidemic models\cite{Qiang,Wang23}.
\end{remk}

{

For each $\gamma>0$, we consider the following linear almost periodic non-densely defined Cauchy problem
\begin{equation}\label{3.28}
\frac{du(t)}{dt}=Au(t)+\frac{1}{\gamma}F(t)u(t)-V(t)u(t).
\end{equation}
Since $\frac{1}{\gamma}F(t)$ satisfies the same assumptions as $F(t)$, by a similar argument as that for Cauchy problem \eqref{2.3}, we can deduce that  Cauchy problem \eqref{3.28} admits an evolution family $\{U_\gamma(t,s)\}_{t\ge s}$ on $X_0$.

\begin{lem}\label{LEM-R0-computation}
Let Assumptions \ref{ASS2.3}, \ref{ASS2.4} and \ref{ASS3.10}
be satisfied, and assume that $R_0>0$. 
Then $\gamma=R_0$ is the unique positive solution of
$$
\omega(U_\gamma)=0,
$$
where $\omega(U_\gamma)$ is the exponential growth bound of $\{U_\gamma(t,s)\}_{t\ge s}$. More precisely,
\[
\begin{cases}
\omega(U_\gamma)>0, & 0<\gamma<R_0,\\[1mm]
\omega(U_\gamma)=0, & \gamma=R_0,\\[1mm]
\omega(U_\gamma)<0, & \gamma>R_0.
\end{cases}
\]
\end{lem}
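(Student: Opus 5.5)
The plan is to apply Theorem \ref{TH3.17} to the rescaled problem \eqref{3.28} for each fixed $\gamma>0$, and then use the scaling of the next generation operator. Problem \eqref{3.28} has the same operators $A$ and $V(t)$ as \eqref{2.3}; only $F(t)$ is replaced by $\frac{1}{\gamma}F(t)$. So the evolution family $\{\Gamma(t,s)\}_{t\ge s}$ of \eqref{2.7} is unchanged. Consequently Assumptions \ref{ASS2.4} and \ref{ASS3.10} still hold verbatim, since they involve only $\Gamma$ and $V$. Assumption \ref{ASS2.3} also holds, because $\frac1\gamma F(t)$ is positive, almost periodic and continuous in operator norm.

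The first step is to identify the next generation operator of \eqref{3.28}. By its definition, with $\Gamma$ unchanged,
$$
\mathscr L_\gamma(\phi)(t)=\frac1\gamma F(t)\lim_{\mu\to+\infty}\int_{-\infty}^t\Gamma(t,s)\mu(\mu I-A)^{-1}\phi(s)ds=\frac1\gamma\mathscr L(\phi)(t).
$$
Since the spectral radius is positively homogeneous, $r(\mathscr L_\gamma)=R_0/\gamma$.

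Next, I apply Theorem \ref{TH3.17} (equivalently Corollary \ref{CO3.18}) to \eqref{3.28}. Let $\{\mathbb U_\gamma(t)\}_{t\ge0}$ be the evolution semigroup built from $U_\gamma$ as in \eqref{3.1}. Then $R_0/\gamma-1$ has the same sign as $\omega(\mathbb U_\gamma)$. By Proposition \ref{PR3.6} applied to \eqref{3.28}, $\omega(\mathbb U_\gamma)=\omega(U_\gamma)$. Hence $\omega(U_\gamma)$ has the same sign as $R_0/\gamma-1$, and this sign equals that of $R_0-\gamma$ because $\gamma>0$. This gives the three cases directly:
\begin{itemize}
\item[{\rm (i)}] $\omega(U_\gamma)>0$ if $0<\gamma<R_0$;
\item[{\rm (ii)}] $\omega(U_\gamma)=0$ if $\gamma=R_0$;
\item[{\rm (iii)}] $\omega(U_\gamma)<0$ if $\gamma>R_0$.
\end{itemize}
The assumption $R_0>0$ guarantees that $\gamma=R_0$ is an admissible positive value. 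Uniqueness of the positive root then follows immediately from the trichotomy, with no monotonicity argument in $\gamma$ needed.

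The only real checkpoint is that every hypothesis of Theorem \ref{TH3.17} is really independent of the scaling of $F$. In particular, the Hille--Yosida operator $\mathcal B$ of Theorem \ref{TH3.13} is built solely from $\Gamma$ and $A$, so it is the same for all $\gamma$. Theorem \ref{TH3.14} then identifies the generator of $\mathbb U_\gamma$ as $(\mathcal B+\frac1\gamma F)_0$. Once this is confirmed, the result is a direct corollary.
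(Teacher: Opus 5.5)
Your proposal is correct and follows essentially the same route as the paper. The paper also observes that $\Gamma$ is unchanged, so $\mathscr L_\gamma=\frac{1}{\gamma}\mathscr L$ and $r(\mathscr L_\gamma)=R_0/\gamma$. It then applies Theorem \ref{TH3.17} to \eqref{3.28} together with Proposition \ref{PR3.6} to get $\operatorname{sign}(\omega(U_\gamma))=\operatorname{sign}(R_0/\gamma-1)$, from which the trichotomy and uniqueness follow. Your explicit check that Assumptions \ref{ASS2.3}, \ref{ASS2.4} and \ref{ASS3.10} survive the rescaling of $F$ spells out what the paper states in one line.
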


\begin{proof}
For each $\gamma>0$, let $\mathscr L_\gamma$ denote the next
generation operator associated with \eqref{3.28}.
Since $F(t)$ is replaced by $\frac{1}{\gamma}F(t)$, while the evolution
family $\{\Gamma(t,s)\}_{t\geq s}$ associated with
\[
\frac{dv(t)}{dt}=Av(t)-V(t)v(t)
\]
remains unchanged, it follows from the definition of the next
generation operator that
$$
(\mathscr L_\gamma\varphi)(t)
=
\frac{1}{\gamma}F(t)
\lim_{\mu\to+\infty}
\int_{-\infty}^{t}
\Gamma(t,s)\mu(\mu I-A)^{-1}\phi(s)\,ds
=
\frac{1}{\gamma}(\mathscr L\phi)(t).
$$
Thus, we have $\mathscr L_\gamma=\frac{1}{\gamma}\mathscr L$. Therefore, we can deduce
$$
r(\mathscr L_\gamma)=\frac{1}{\gamma}r(\mathscr L)=\frac{R_0}{\gamma}.
$$
By applying Theorem~\ref{TH3.17} to \eqref{3.28} and using Proposition \ref{PR3.6}, we obtain
$$
\operatorname{sign}\big(\omega(U_\gamma)\big)=\operatorname{sign}\big(r(\mathscr L_\gamma)-1\big)=\operatorname{sign}\left(\frac{R_0}{\gamma}-1\right).
$$
Thus, if $0<\gamma<R_0$, then
$$
\omega(U_\gamma)>0.
$$
If $\gamma=R_0$, then
$$
\omega(U_{R_0})=0.
$$
Finally, if $\gamma>R_0$, then
$$
\omega(U_\gamma)<0.
$$
Therefore, $\gamma=R_0$ is the unique positive solution of $\omega(U_\gamma)=0.$
\end{proof}

\begin{remk}
Lemma \ref{LEM-R0-computation} provides a computational characterization of $R_0$ by reducing its computation to the equation
$$
\omega(U_\gamma)=0.
$$
Numerical methods for evaluating such growth bounds have been developed for several classes of almost-periodic models. For example, Wang et al. established the long-term growth characteristics of almost periodic reaction-diffusion systems and computed $R_0$ through a bisection method \cite{Wang23}. Related results can be found in \cite{Qiang,Wang13}. In fact, the specific numerical computation of $R_0$ depends on the structure of the underlying model and the corresponding additional assumption.
\end{remk}
}

In many applications, $F(t)$ always maps $X$ into $X_1$, where $X_1$ is a closed subspace of $X$. Here, we give a corollary to make our results easier to use in such cases.

\begin{coro}\label{CO3.20}
Let Assumptions \ref{ASS2.3}, \ref{ASS2.4} and \ref{ASS3.10} be satisfied. Assume that there exists a closed subspace $X_1\subset X$ such that $F(t)X_0\subset X_1$ for all $t\in \mathbb R$. Then $R_0=r(\mathscr L_*)$, where $\mathscr L_*$ is the restriction of $\mathscr L$ to $AP(\mathbb R,X_1)$. 
\end{coro}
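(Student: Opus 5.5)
The key observation is that $\mathscr L$ has range in $AP(\mathbb R,X_1)$. Indeed, for $\phi\in AP(\mathbb R,X)$ the inner term $\lim_{\mu\to+\infty}\int_{-\infty}^t\Gamma(t,s)\mu(\mu I-A)^{-1}\phi(s)ds=\mathcal L_0(\phi)(t)$ lies in $X_0$ by Theorem \ref{TH3.12}. Hence $\mathscr L(\phi)(t)=F(t)\mathcal L_0(\phi)(t)\in F(t)X_0\subset X_1$ for every $t$. By Lemma \ref{LE3.16}, $\mathscr L(\phi)\in AP(\mathbb R,X)$. Since $X_1$ is closed, $AP(\mathbb R,X_1)$ is a closed subspace of $AP(\mathbb R,X)$, and it contains $\mathscr L(\phi)$. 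In particular $\mathscr L$ leaves $AP(\mathbb R,X_1)$ invariant, so $\mathscr L_*:=\mathscr L|_{AP(\mathbb R,X_1)}$ is a bounded operator on $AP(\mathbb R,X_1)$.

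With this in hand, I would compare spectral radii through Gelfand's formula rather than through spectra. Since $\mathscr L_*^n$ is the restriction of $\mathscr L^n$, we have $\|\mathscr L_*^n\|\le\|\mathscr L^n\|$, which gives $r(\mathscr L_*)\le r(\mathscr L)$. For the reverse inequality, observe that $\mathscr L^{n}=\mathscr L_*^{n-1}\mathscr L$ for $n\ge1$, because $\mathscr L$ maps into $AP(\mathbb R,X_1)$ and all later iterates act on that subspace. Therefore
$$\|\mathscr L^n\|\le\|\mathscr L_*^{n-1}\|\,\|\mathscr L\|,$$
where $\|\mathscr L\|$ is viewed as an operator into $AP(\mathbb R,X_1)$ with the induced norm. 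Taking $n$-th roots and letting $n\to\infty$, the factor $\|\mathscr L\|^{1/n}$ tends to $1$ (or the whole bound is trivial if $\|\mathscr L\|=0$). This yields $r(\mathscr L)\le\lim_n\|\mathscr L_*^{n-1}\|^{1/n}=r(\mathscr L_*)$. Hence $R_0=r(\mathscr L)=r(\mathscr L_*)$.

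The only step needing care is the first one. One must make sure that the almost periodicity of $\mathscr L\phi$, which holds in $X$, transfers to $X_1$. It does, because the norm on $X_1$ is inherited from $X$ and each value lies in the closed set $X_1$; the sup-norm closedness of $AP(\mathbb R,X_1)$ follows from the closedness of $X_1$. The rest is the standard Gelfand-formula argument, and no positivity or cone structure on $X_1$ is needed.
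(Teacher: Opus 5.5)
Your proposal is correct and follows the same route as the paper. The paper first observes that $\mathscr L$ maps $AP(\mathbb R,X)$ into $AP(\mathbb R,X_1)$ and then invokes Gelfand's formula. You add the details the paper leaves implicit: membership of $\mathcal L_0(\phi)(t)$ in $X_0$, closedness of $AP(\mathbb R,X_1)$, and the two norm inequalities $\|\mathscr L_*^n\|\le\|\mathscr L^n\|$ and $\|\mathscr L^n\|\le\|\mathscr L_*^{n-1}\|\,\|\mathscr L\|$.
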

\begin{proof}
Since $F(t)X_0\subset X_1$ for all $t\in \mathbb R$, $\mathscr L$ maps $AP(\mathbb R,X)$ into $AP(\mathbb R,X_1)$. By Gelfand's formula, we can deduce that $\mathscr L$ has the same spectral radius on the spaces $AP(\mathbb R,X)$ and $AP(\mathbb R,X_1)$. Thus, $R_0=r(\mathscr L_*)$, where $\mathscr L_*$ is the restriction of $\mathscr L$ to $AP(\mathbb R,X_1)$. 
\end{proof}

\section{Applications}
\indent\indent In this section, we apply our results to define the basic reproduction ratio $R_0$ for several age-structured models and functional differential systems.

\subsection{An almost periodic age-structured diffusive population model}
\indent\indent Let us consider the following almost periodic population model with spatial diffusion under the Neumann boundary condition. Let $\Omega\subset \mathbb R^n, n\in\mathbb N_+$ be a bounded, open and connected set (domain) with smooth boundary $\partial \Omega$. The age-structured population model is given by 
\begin{equation}\label{4.1}
\left\{ {\begin{array}{*{20}{l}}
{\left( {\frac{\partial }{{\partial t}} + \frac{\partial }{{\partial a}}} \right)P(t,a,x) = d\Delta P(t,a,x)-\mu(t)P(t,a,x),}\quad t\ge t_0,a\in(0,a_+), x\in \Omega,\\
P(t,0,x)=\int_0^{a_+} \beta(t,a)P(t,a,x)da,\quad t\ge t_0,x\in\Omega,\\
P(t_0,a,x)=p_0(a,x)\in L^1_+((0,a_+),C(\overline\Omega)),\\
\frac{\partial P(t,a,x)}{\partial \nu}=0, \quad t\ge t_0,a\in(0,a_+), x\in \partial \Omega,
\end{array}} \right.
\end{equation}
where $P(t,a,x)$ denotes the density of the population of age $a\ge0$ in position $x\in \overline \Omega$ at time $t\ge t_0$. In addition, $d>0$ denotes the diffusion coefficient, $a_+$ denotes the maximal reproductive age, $\beta(t,a)\in AP(\mathbb R,L^\infty_+(0,a_+))$ and $\mu(t)\in AP_+(\mathbb R)$ denote the fertility and mortality rates respectively. In addition, we assume that there exists $\epsi_0>0$ such that $\mu(t)>\epsi_0$ and $\beta(t,a)\ge \epsi_0$ for all $t\in \mathbb R$ and almost every $a\in(0,a_+)$.
\begin{remk}
If we consider the almost periodic and age-structured population model \eqref{4.1} without spatial diffusion, the threshold dynamics and global asymptotic behavior have been obtained in \cite{Huo25}.  
\end{remk}
Let $E:=C(\overline \Omega)$ and $\mathcal X:=L^1((0,a_+),E)$ endowed with the following norm
$$
\|\phi\|_E=\sup_{x\in \overline\Omega} |\phi(x)|,\quad \|\varphi\|_{\mathcal X}=\int_0^{a_+} \|\varphi(a,x)\|_Eda,\quad \forall \phi\in E, \varphi\in \mathcal X.
$$
Recall that $d\Delta$ is the Laplace operator with the Neumann boundary condition, then
\[D(d\Delta):=\{\phi\in C^2(\Omega)\cap C^1(\overline\Omega):d\Delta \phi\in C( \Omega),\frac{\partial \phi}{\partial \nu}=0 \text{ for }x\in\partial \Omega\}.\]
By \cite[Chapter 7]{Smith95}, we know that the closure of $d\Delta$ generates an analytic semigroup of linear operators $T_{d\Delta}(t)$ on $E$. 

The extended space $X$ and its subspace $X_0$ are defined by
$$
X:=E\times \mathcal X,\quad X_0:=\{0_E\}\times \mathcal X.
$$
For any $(\phi,\varphi)\in X$, the norm is defined by
$$
\|(\phi,\varphi)\|_X:=\|\phi\|_E+\|\varphi\|_{\mathcal X}.
$$
Then we consider the family of bounded linear operators $\{R_\lambda\}_{\lambda>0}$ on $X$, defined by
\begin{equation}\label{4.2}
    {R_\lambda }\left( \begin{array}{l}
\phi \\
\varphi
\end{array} \right) = \left( \begin{array}{l}
0\\
\psi
\end{array} \right) \Leftrightarrow \psi (a) = {e^{ -a\lambda}}T_{d\Delta }(a)\phi  + \int_0^a {{e^{ - \int_s^a {\lambda dl} }}T_{d\Delta }(a - s)\varphi (s)ds.} 
\end{equation}
Observe that $\{R_\lambda\}_{\lambda>0}$ is a pseudo-resolvent on $X$. That is to say that
$$R_\lambda-R_\mu=(\mu-\lambda)R_\lambda R_\mu.$$
Moreover, we have
\[R_\lambda x=0, x\in X \Rightarrow x\in X_0,\]
and
\[\mathop{\lim}\limits_{\lambda\to+\infty}\lambda R_\lambda x=x, \quad \forall x\in X_0.\]
By Section 1.9 of \cite{Pazy}, we can deduce that there exists a unique closed linear operator $A$ that satisfies
$$
A:D(A)\subset X\to X, \overline{D( A)}=X_0, R_\lambda=(\lambda I-A)^{-1},\quad\forall\lambda>0,
$$
Therefore, we can define $A$ as the following form
\begin{equation}\label{4.3}
   A\left( \begin{array}{l}
0\\
\psi
\end{array} \right)=\left( \begin{array}{l}
A_1 \psi \\
A_2 \psi
\end{array} \right).
\end{equation}
\begin{remk}\label{RE4.2}
For a better explanation of $A$, we give the definition form of $A$ here,
\[
    A\left( \begin{array}{l}
0\\
\psi
\end{array} \right)=\left( \begin{array}{l}
A_1 \psi \\
A_2 \psi
\end{array} \right)=\left( \begin{array}{c}
-\psi(0) \\
-\frac{\partial \psi(a,x)}{\partial a}+{d\Delta \psi(a,x)}
\end{array} \right),\]
where $D(A)=\{0\}\times W^{1,1}((0,a_+),C(\overline \Omega))$. Here, the definition of $A$ is meant formally only. For a precise definition, we should use resolvent in \eqref{4.2}.
\end{remk}
In addition, we define $F(t):X_0\to X$ and $V(t):X_0\to X$ as follows
$$
 F(t)  \left( \begin{array}{l}
0\\
\psi
\end{array} \right)=\left( \begin{array}{c}
\int_0^{a_+}\beta(t,a)\psi(a,x)da \\
0
\end{array} \right),\quad  V(t)  \left( \begin{array}{l}
0\\
\psi
\end{array} \right)=\left( \begin{array}{c}
0 \\
\mu(t)\psi(a,x)
\end{array} \right).
$$
By the above notations, the population model \eqref{4.1} can be rewritten in the following form
$$
\frac{du(t)}{dt}=Au(t)+F(t)u(t)-V(t)u(t), \quad u(t_0)=u_0\in X_0
$$
By \cite[Section 6]{Thieme09} or \cite[Section 3.2]{Huo23}, we know that $A$ is a resolvent-positive operator. Recall that almost periodic functions are bounded, then there exists $\lambda>0$ such that $\lambda I-V$ is positive. Moreover, it is easy to see that $F(t)\phi\in X_+$ for all $t\in \mathbb R$ and $\phi\in X_{0}^+$. Therefore, Assumption \ref{ASS2.3} is satisfied in model \eqref{4.1}.

Next, we examine Assumptions \ref{ASS2.4} and \ref{ASS3.10} for the population model \eqref{4.1}.
\begin{lem}\label{LE4.3}
Let $\{\Gamma(t,s)\}_{t\ge s}$ be the evolution family of the non-densely defined Cauchy problem
\begin{equation}\label{4.4}
    \frac{dv(t)}{dt}=Av(t)-V(t)v(t),\quad t\in\mathbb R,
\end{equation}
then $\omega(\Gamma)<0$, where $\omega(\Gamma)$ is the exponential growth bound of $\{\Gamma(t,s)\}_{t\ge s}$. Moreover, for any $\epsi>0$ and $\nu>0$, there exists $\eta>0$ and $\delta<0$ such that
\begin{equation}\label{4.5}
    \|\Gamma(t+\tau,s+\tau)-\Gamma(t,s)\|\le \epsi e^{\delta (t-s)}
\end{equation}
for all $t\ge s$ with $t-s\ge \nu$ and all $\tau\in \mathcal T(V,\eta)$.
\end{lem}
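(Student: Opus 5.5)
The plan rests on one observation: $V(t)$ acts on $X_0$ as multiplication by the scalar $\mu(t)$. Moreover, $V(t)$ commutes with $A$ in the resolvent sense, since $(\lambda I-A)^{-1}$ commutes with scalar multiplication. So I would first show that the evolution family of \eqref{4.4} factorizes as
\begin{equation*}
\Gamma(t,s)=e^{-\int_s^t\mu(l)dl}\,T_{A_0}(t-s),\quad t\ge s,
\end{equation*}
where $T_{A_0}$ is the $C_0$-semigroup generated by the part $A_0$ of $A$ in $X_0$. To verify this, set $w(t):=e^{-\int_{t_0}^t\mu(l)dl}T_{A_0}(t-t_0)v_0$ and check that it satisfies the variation of constants formula \eqref{2.8}. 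This is a scalar integration by parts. It uses that $\mu(s)$ commutes with $T_{A_0}$ and $\lambda(\lambda I-A)^{-1}$, and that $\lambda(\lambda I-A)^{-1}x\to x$ for $x\in X_0$. Uniqueness of mild solutions then gives the identity.

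Next I bound $T_{A_0}$. From \eqref{4.2}, $T_{A_0}(t)$ acting on $(0,\psi)$ is the age-shift with births zero: $(T_{A_0}(t)\psi)(a)=T_{d\Delta}(t)\psi(a-t)$ for $a>t$, and $0$ otherwise. The Neumann heat semigroup on $C(\overline\Omega)$ is a positive contraction, so $\|T_{A_0}(t)\|\le 1$. In fact $T_{A_0}(t)=0$ for $t\ge a_+$, but this is not needed. Since $\mu(l)>\epsi_0$, we get
\begin{equation*}
\|\Gamma(t,s)\|\le e^{-\epsi_0(t-s)},
\end{equation*}
hence $\omega(\Gamma)\le-\epsi_0<0$.

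For \eqref{4.5}, the factorization gives
\begin{equation*}
\Gamma(t+\tau,s+\tau)-\Gamma(t,s)=\Big(e^{-\int_s^t\mu(l+\tau)dl}-e^{-\int_s^t\mu(l)dl}\Big)T_{A_0}(t-s).
\end{equation*}
By the mean value inequality $|e^{-x}-e^{-y}|\le e^{-\min(x,y)}|x-y|$, both exponents are at least $\epsi_0(t-s)$. For $\tau\in\mathcal T(V,\eta)$ we have $\|V(l+\tau)-V(l)\|<\eta$, which here means $|\mu(l+\tau)-\mu(l)|<\eta$. Hence the norm is at most
\begin{equation*}
\eta\,(t-s)e^{-\epsi_0(t-s)}\le \frac{2\eta}{\epsi_0 e}\,e^{-\frac{\epsi_0}{2}(t-s)},
\end{equation*}
using $re^{-\epsi_0 r/2}\le 2/(\epsi_0 e)$. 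So I choose $\delta=-\epsi_0/2$ and $\eta=\epsi\epsi_0 e/2$, and the restriction $t-s\ge\nu$ is not even needed.

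The main obstacle is the factorization step, carried out rigorously in the non-densely defined framework. The care lies in justifying the limit $\lambda\to\infty$ in \eqref{2.8} when the scalar factor is pulled through. If that proves delicate, I would instead verify directly that $w$ is an integrated solution in the sense of \cite[Definition 1.2]{Thieme90}, and invoke uniqueness there.
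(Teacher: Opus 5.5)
Your proof is correct, but it obtains the translation estimate \eqref{4.5} differently from the paper's proof of this lemma.

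The paper also arrives at your factorization, by solving along characteristics; note that $\int_{a-t+t_0}^a\mu(t+s-a)\,ds=\int_{t_0}^t\mu(r)\,dr$. It then deduces $\omega(\Gamma)<0$ from $\omega(T_{d\Delta})\le 0$. For \eqref{4.5} it does not estimate directly. Instead it introduces the parabolic family $\Gamma_*(t,s)=e^{-\int_s^t\mu(r)dr}T_{d\Delta}(t-s)$ on $C(\overline\Omega)$, quotes Lemma 3.1 of Wang et al.\ for the almost periodic translation estimate of $\Gamma_*$, and says the estimate carries over to $\Gamma_2$ through the age-shift structure.

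You instead use that $\mu$ enters only as a scalar factor commuting with $T_{A_0}$. The mean value inequality then gives $\eta(t-s)e^{-\varepsilon_0(t-s)}\le \frac{2\eta}{\varepsilon_0 e}e^{-\varepsilon_0(t-s)/2}$ directly. This bound needs no restriction $t-s\ge\nu$ and depends linearly on $\eta$. It is exactly the argument the paper itself uses for Lemmas \ref{LE4.01} and \ref{LE4.8}, though not for this one.

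What your version buys:
\begin{itemize}
\item It is self-contained, with explicit constants $\eta=\varepsilon\varepsilon_0 e/2$ and $\delta=-\varepsilon_0/2$.
\item It makes explicit where $\mu>\varepsilon_0$ is used. The paper's passage from $\omega(T_{d\Delta})\le 0$ to $\omega(\Gamma_2)<0$ tacitly needs this, or else the fact that $T_{A_0}(t)=0$ for $t\ge a_+$.
\end{itemize}
What the paper's route buys: it would still work if the parabolic part had genuinely time-dependent, non-commuting coefficients, where your scalar trick breaks down. That generality is not needed for this lemma.

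Finally, the step you flag as the main obstacle is harmless. Here $V(s)$ maps $X_0$ into $X_0$, so $\lambda(\lambda I-A)^{-1}V(s)w(s)\to V(s)w(s)$ under the uniform Hille--Yosida bound. The limit in \eqref{2.8} therefore passes inside the integral by dominated convergence. Then \eqref{2.8} reduces to an ordinary bounded-perturbation formula in $X_0$, whose solution is unique by Gronwall.
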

\begin{proof}
By setting $v(t)=(0,z(t))^T$, the Cauchy problem \eqref{4.4} is given explicitly by
\begin{equation}\label{4.6}
    \left\{ {\begin{array}{*{20}{l}}
{\left( {\frac{\partial }{{\partial t}} + \frac{\partial }{{\partial a}}} \right)z(t,a,x) = d\Delta z(t,a,x)-\mu(t)z(t,a,x),}\quad t\ge t_0,a\in(0,a_+),x\in \Omega,\\
z(t,0,x)=0,\quad t\ge t_0,x\in \Omega,\\
z(t_0,a,x)=z_0(a,x)\in L^1_+((0,a_+),C(\overline\Omega)),\\
\frac{\partial z(t,a,x)}{\partial \nu}=0, \quad t\ge t_0,a\in(0,a_+),x\in \partial\Omega.
\end{array}} \right.
\end{equation}
By solving \eqref{4.6} along the characteristics, we have
$$
z(t,a,x)= \left\{ {\begin{array}{*{20}{ll}}
e^{-\int_{a-t+t_0}^a \mu(t+s-a)ds}T_{d\Delta}(t-t_0)z_0(a-t+t_0,x),&0\le t-t_0\le a,\\
0,& t-t_0>a.
\end{array}} \right.
$$
This implies that the evolution family $\{\Gamma(t,s)\}_{t\ge s}$ has the following form
$$ \Gamma(t,t_0)  \left( \begin{array}{l}
0\\
z
\end{array} \right)=\left( \begin{array}{c}
0 \\
\Gamma_2(t,t_0)z
\end{array} \right),\quad \forall t\ge t_0,
$$
where $\{\Gamma_2(t,s)\}_{t\ge s}$ admits the following expression
$$
(\Gamma_2(t,t_0) z)(a,x)= \left\{ {\begin{array}{*{20}{ll}}
e^{-\int_{a-t+t_0}^a \mu(t+s-a)ds}T_{d\Delta}(t-t_0)z_0(a-t+t_0,x),&0\le t-t_0\le a,\\
0,& t-t_0>a.
\end{array}} \right.
$$
Recall that $\{T_{d\Delta}(t)\}_{t\ge 0}$ is the $C_0-$semigroup generated by the Laplace operator with Neumann boundary condition. It is well known that $\omega(T_{d\Delta})\le 0$. Thus, we can deduce that $\omega(\Gamma_2)<0$ and further obtain $\omega(\Gamma)<0$. 

In the following, we consider the almost periodic reaction-diffusion equation under the Neumann boundary condition

\begin{equation}\label{4.7}
    \left\{ {\begin{array}{*{20}{l}}
\frac{\partial \varphi(t,x)}{\partial t}=d\Delta \varphi(t,x)-\mu(t)\varphi(t,x),\quad t\ge t_0,x\in \Omega,\\
\varphi(t_0,x)=\varphi(x), \quad x\in \Omega,\\
\frac{\partial \varphi(t,x)}{\partial \nu}=0,\quad t\ge t_0,x\in\partial \Omega.
\end{array}} \right.
\end{equation}
Based on the theory of almost periodic parabolic equations\cite{Yagi}, \eqref{4.7} admits an evolution family $\{\Gamma_*(t,s)\}_{t\ge s}$ on $C(\overline\Omega)$. Moreover, $\{\Gamma_*(t,s)\}_{t\ge s}$ satisfies
$$
\Gamma_*(t,s)=e^{-\int_s^t\mu(r)dr}T_{d\Delta}(t-s),\quad \forall t\ge s.
$$
For equation \eqref{4.7}, the associated operator family is given by $D(t)=d\Delta -\mu(t)$. Since the  $D(t)$ and $V(t)$ is determined by the same coefficient $\mu(t)$, the corresponding almost periods can be chosen from $\mathcal T(V,\eta)$. According to \cite[Lemma 3.1]{Wang23}, we know that for any $\epsi>0$ and $\nu>0$, there exists $\eta>0$ and $\delta<0$ such that
\begin{equation}
    \|\Gamma_*(t+\tau,s+\tau)-\Gamma_*(t,s)\|\le \epsi e^{\delta (t-s)}
\end{equation}
for all $t\ge s$ with $t-s\ge \nu$ and all $\tau\in \mathcal T(V,\eta)$.
By the expression of $\{\Gamma_2(t,s)\}_{t\ge s}$, it is easy to see that $\{\Gamma_2(t,s)\}_{t\ge s}$ also satisfies the above estimate. Hence, by the definition of $\{\Gamma(t,s)\}_{t\ge s}$, it is clear that \eqref{4.5} is true.
\end{proof}

Therefore, we can apply our results to the age-structured diffusive population model \eqref{4.1}. By Theorem \ref{TH3.17}, the next generation operator can be defined by
$$
\mathscr L(\phi)(t):=F(t)\lim_{\mu\to +\infty} \int_{-\infty}^t \Gamma(t,s)\mu(\mu I-A)^{-1}\phi(s)ds,\quad \forall t\in \mathbb R,\phi\in AP(\mathbb R,X).
$$
Let $X_1:=E\times \{0_{\mathcal X}\}$. Then we find that $F(t)$ maps $X_0$ into $X_1$. It follows from Corollary \ref{CO3.20} that we can define the next generation operator by the restriction of $\mathscr L$ to $AP(\mathbb R,X_1)$. In addition, by Lemma \ref{LE3.9}, we know that $u(t)=\lim_{\lambda\to+\infty}\int_{-\infty}^t \Gamma(t,s) \lambda(\lambda I-A)^{-1}\phi(s)ds$
is the solution of the following equation
\begin{equation}\label{4.8}
    \left\{ {\begin{array}{*{20}{l}}
\frac{du(t)}{dt}=Au(t)-V(t)u(t)+\phi(t), \quad t\ge t_0\\
u(t_0)=u_0.
\end{array}} \right.
\end{equation}
By letting $\phi=(m,0)\in AP(\mathbb R,X_1)$ and $u(t)=(0,z_*(t))^T$, then \eqref{4.8} is given explicitly by
\begin{equation}\label{4.9}
    \left\{ {\begin{array}{*{20}{l}}
{\left( {\frac{\partial }{{\partial t}} + \frac{\partial }{{\partial a}}} \right)z_*(t,a,x) = d\Delta z_*(t,a,x)-\mu(t)z_*(t,a,x),}\quad t\ge t_0,a\in(0,a_+),x\in \Omega,\\
z_*(t,0,x)=m(t,x),\quad t\ge t_0,x\in \Omega,\\
z_*(t_0,a,x)=z_{*0}(a,x)\in L^1_+((0,a_+),C(\Omega)),\\
\frac{\partial z_*(t,a,x)}{\partial \nu}=0,\quad t\ge t_0,a\in(0,a_+),x\in \partial \Omega.
\end{array}} \right.
\end{equation}
By solving along the characteristics again, we have
$$
z_*(t,a,x)= \left\{ {\begin{array}{*{20}{ll}}
e^{-\int_{a-t+t_0}^a \mu(s-a+t_0)ds}T_{d\Delta}(t-t_0)z_{*0}(a-t+t_0,x),&0\le t-t_0\le a,\\
e^{-\int_0^a\mu(t+s-a)ds}T_{d\Delta}(a)m(t-a),& t-t_0>a.
\end{array}} \right.
$$
By setting $t_0\to-\infty$, the entire solution of \eqref{4.9} is given by
$$
z_*(t,a,x)=e^{-\int_0^a\mu(t+s-a)ds}T_{d\Delta}(a) m(t-a),\quad t\in \mathbb R,a\ge 0,x\in \Omega.
$$
Based on Lemma \ref{LE3.9} and $F(t):X_0\to X_1$, then for any $t\in \mathbb R$ and $\phi=(m,0)^T\in AP(\mathbb R,X_1)$, we have
$$
\begin{array}{*{20}{rl}}
\mathscr L(\phi)(t)&=F(t)\lim_{\mu\to +\infty} \int_{-\infty}^t \Gamma(t,s)\mu(\mu I-A)^{-1}\phi(s)ds \\
&=\left( \begin{array}{c}
\int_0^{a_+} \beta(t,a)e^{-\int_0^a \mu(t+s-a)ds}T_{d\Delta}(a)m(t-a)da\\
0
\end{array} \right).
\end{array}
$$
According to Corollary \ref{CO3.20}, we can define the next generation operator by the following form
\begin{equation}\label{4.10}
    \mathscr L_*\psi(t):=\int_0^{a_+} \beta(t,a)e^{-\int_0^a \mu(t+s-a)ds}T_{d\Delta}(a)\psi(t-a)da,\quad \psi\in AP(\mathbb R,C(\overline\Omega))
\end{equation}
As a consequence, the threshold dynamics of the almost periodic age-structured diffusive population model \eqref{4.1} can be determined by 
$$
R_0=r(\mathscr L_*).
$$
By using Corollary \ref{CO3.18}, we can deduce the following results.
\begin{thm}\label{TH4.4}
Let $\{U(t,s)\}_{t\ge s}$ be the evolution family of the population model \eqref{4.1} and $\omega(\cdot)$ be the exponential growth bound, then the following statements are valid.
\begin{itemize}
\item[{\rm (i)}] If $r(\mathscr L_*)<1$, then $\omega(U)<0$ and $\lim_{t\to +\infty}\|P(t,\cdot,\cdot)\|=0$.
\item[{\rm (ii)}]  If $r(\mathscr L_*)=1$, then $\omega(U)=0$.
\item[{\rm (iii)}]  If $r(\mathscr L_*)>1$, then $\omega(U)>0$.
\end{itemize}  
\end{thm}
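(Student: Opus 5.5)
The plan is to obtain Theorem \ref{TH4.4} as a direct specialization of Corollary \ref{CO3.18}, combined with Proposition \ref{PR3.6} and Corollary \ref{CO3.20}. First we record that the hypotheses of the abstract theory hold for model \eqref{4.1}. Assumption \ref{ASS2.3} was already checked right after Remark \ref{RE4.2}: $A$ is resolvent positive, $\lambda I-V(t)$ is positive for large $\lambda$ since $\mu$ is bounded, and $F(t)$ is positive. Assumptions \ref{ASS2.4} and \ref{ASS3.10} are exactly the content of Lemma \ref{LE4.3}. Hence Theorem \ref{TH3.17} and Corollary \ref{CO3.18} apply and give that $R_0-1=r(\mathscr L)-1$ has the same sign as $\omega(\mathbb U)$.

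Next we translate both sides into the quantities appearing in the statement. Since $F(t)X_0\subset X_1=E\times\{0_{\mathcal X}\}$, Corollary \ref{CO3.20} gives $R_0=r(\mathscr L|_{AP(\mathbb R,X_1)})$. The computation preceding \eqref{4.10} shows that this restriction is conjugate to $\mathscr L_*$ through the isometric isomorphism $AP(\mathbb R,X_1)\cong AP(\mathbb R,C(\overline\Omega))$, $(m,0)\mapsto m$. Therefore $r(\mathscr L_*)=R_0$. On the other side, Proposition \ref{PR3.6} gives $\omega(\mathbb U)=\omega(U)$. Combining these, the three cases (i)--(iii) for $\omega(U)$ follow immediately.

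It remains to prove the decay assertion in (i). If $\omega(U)<0$, choose $\hat\omega\in(\omega(U),0)$. By definition of the growth bound there is $M\ge1$ with $\|U(t,t_0)\|\le Me^{\hat\omega(t-t_0)}$. Since $(0,P(t,\cdot,\cdot))^T=U(t,t_0)(0,p_0)^T$ and the $X$-norm of an element of $X_0$ equals the $\mathcal X$-norm of its second component, we get $\|P(t,\cdot,\cdot)\|_{\mathcal X}\le Me^{\hat\omega(t-t_0)}\|p_0\|_{\mathcal X}\to0$.

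The only step needing care is identifying $\mathscr L$ restricted to $AP(\mathbb R,X_1)$ with $\mathscr L_*$. This in turn requires justifying that the entire solution of \eqref{4.9}, obtained by letting $t_0\to-\infty$, coincides with the expression from Lemma \ref{LE3.9}. Uniqueness of bounded entire mild solutions in Lemma \ref{LE3.9}(i) settles this, since the characteristic formula is bounded: the bound on $T_{d\Delta}$ and $\mu\ge\epsi_0$ control it, and $a$ ranges over the bounded interval $(0,a_+)$. Everything else is bookkeeping.
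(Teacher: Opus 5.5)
Your proposal is correct and follows the paper's route: the paper obtains Theorem~\ref{TH4.4} directly from Corollary~\ref{CO3.18}, after verifying the assumptions in Lemma~\ref{LE4.3} and identifying $R_0=r(\mathscr L_*)$ via Corollary~\ref{CO3.20} and the characteristic computation leading to \eqref{4.10}. You also spell out three steps the paper leaves implicit, all handled correctly: $\omega(\mathbb U)=\omega(U)$ via Proposition~\ref{PR3.6}, the uniqueness justification for the entire solution of \eqref{4.9}, and the decay $\|P(t,\cdot,\cdot)\|\to 0$ from the exponential bound on $U(t,t_0)$.
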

\begin{remk}\label{RE4.5}
Since almost periodic functions are the generalization of periodic functions, the threshold dynamics of the almost periodic population model \eqref{4.1} can be directly applied to the periodic case. More precisely, let $C_p$ denote the space of the periodic functions, $\beta(t,a)\in C_p(\mathbb R,L^\infty_+(0,a_+))$ and $\mu(t)\in C_p(\mathbb R)$, then the next generation operator $\mathscr L_*$ of the periodic population model \eqref{4.1} can also be defined by \eqref{4.10}. The difference is that $\mathscr L_*$ should be defined on $C_p(\mathbb R, C(\overline\Omega))$, instead of $AP(\mathbb R,C(\overline\Omega))$. Moreover, Theorem \ref{TH4.4} also holds in the periodic case.
\end{remk}

{
\subsection{An almost periodic and age-structured population model with nonlocal diffusion}
\indent\indent In this subsection, we extend the definition of  $R_0$ from the local case to the nonlocal diffusion case for an age-structured population model. Let $\Omega\subset\mathbb R^n$ be a bounded, open and connected domain and $n\ge1$ be an integer. We consider the following model
\begin{equation}\label{4.01}
\left\{ {\begin{array}{*{20}{l}}
{\left( {\frac{\partial }{{\partial t}} + \frac{\partial }{{\partial a}}} \right)P(t,a,x) =d(J*P-P)(t,a,x) - \mu (t)P(t,a,x),\quad t\ge t_0, a\in(0,a_+), x\in \Omega,}\\
{P(t,0,x) = \int_0^{a_+}  {\beta (t,a)P(t ,a,x)da}  ,\quad t\ge t_0, x\in \Omega,}\\
{P(t_0,a,x)=p_0(a,x)\in L^1_+((0,a_+),L^2(\Omega))}\\
{P(t,a,x) = 0,\quad t\ge t_0, a\in(0,a_+), x \notin\Omega.}
\end{array}} \right.
\end{equation}
where $P(t,a,x)$ denotes the density of the population of age $a\ge0$ in position $x\in \overline \Omega$ at time $t\ge t_0$. The diffusion kernel $J$ is a $C_0$, compactly supported, non-negative function with unit integral representing the spatial dispersal, i.e.
$$\int_{{\mathbb R^n}}J(x)dx=1, J(x)\ge0,\quad \forall x\in \mathbb R^n,$$
and $J*P-P$ is defined by
$$(J*P-P)(t,a,x)=\int_{\mathbb R^n}J(x-y)P(t,a,y)dy-P(t,a,x).$$
In addition, $d$ denotes the diffusion coefficient, $a_+$ denotes the least upper bound of reproductive age, $\beta(t,a)\in AP(\mathbb R,L^\infty_+(0,a_+))$ and $\mu(t)\in AP_+(\mathbb R)$ denote the fertility and mortality rates respectively. In addition, we assume that there exists $\epsi_0>0$ such that $\mu(t)>\epsi_0$ and $\beta(t,a)\ge \epsi_0$ for all $t\in \mathbb R$ and $a\in(0,a_+)$.

Let $E:=L^2(\Omega)$ and $\mathcal X:=L^1((0,a_+),E)$ endowed with the following norm
$$
\|\phi\|_E=\left(\int_\Omega \phi^2(x) dx\right)^{\frac{1}{2}},\quad \|\varphi\|_{\mathcal X}=\int_0^{a_+} \|\varphi(a,x)\|_Eda,\quad \forall \phi\in E, \varphi\in \mathcal X.
$$
By \cite{Kang2021}, the nonlocal diffusion operator $d(J*\cdot-\cdot)$ with the homogeneous Dirichlet condition generates a positive $C_0$-semigroup $\{\mathbb T(t)\}_{t\geq0}$ on $E=L^2(\Omega)$. That is, $\{\mathbb T(t)\}_{t\ge 0}$ is the solution semigroup on $E$ of the following non-local diffusion equation
$$
\frac{\partial u(a,x)}{\partial a}=d(J*u-u)(a,x).
$$

The extended space $X$ and its subspace $X_0$ are defined by
$$
X:=E\times \mathcal X,\quad X_0:=\{0_E\}\times \mathcal X.
$$
For any $(\phi,\varphi)\in X$, the norm is defined by
$$
\|(\phi,\varphi)\|_X:=\|\phi\|_E+\|\varphi\|_{\mathcal X}.
$$
Then we consider the family of bounded linear operators $\{R_\lambda\}_{\lambda>0}$ on $X$, defined by
\begin{equation}\label{4.02}
    {R_\lambda }\left( \begin{array}{l}
\phi \\
\varphi
\end{array} \right) = \left( \begin{array}{l}
0\\
\psi
\end{array} \right) \Leftrightarrow \psi (a) = {e^{ -a\lambda}}\mathbb T(a)\phi  + \int_0^a {{e^{ - \int_s^a {\lambda dl} }}\mathbb T(a - s)\varphi (s)ds.} 
\end{equation}
Similar to the argument in subsection 4.1, we can deduce that there exists a unique closed linear operator $A$ that satisfies
$$
A:D(A)\subset X\to X, \overline{D( A)}=X_0, R_\lambda=(\lambda I-A)^{-1},\quad\forall\lambda>0,
$$
In addition, we can define $F(t):X_0\to X$ and $V(t):X_0\to X$
$$
 F(t)  \left( \begin{array}{l}
0\\
\psi
\end{array} \right)=\left( \begin{array}{c}
\int_0^{a_+}\beta(t,a)\psi(a,x)da \\
0
\end{array} \right),\quad  V(t)  \left( \begin{array}{l}
0\\
\psi
\end{array} \right)=\left( \begin{array}{c}
0 \\
\mu(t)\psi(a,x)
\end{array} \right).
$$
By the above notations, the population model \eqref{4.01} can be rewritten in the following form
$$
\frac{du(t)}{dt}=Au(t)+F(t)u(t)-V(t)u(t), \quad u(t_0)=u_0\in X_0.
$$
As in subsection 4.1, we can deduce that Assumption \ref{ASS2.3} holds. Next, we examine Assumptions \ref{ASS2.4} and \ref{ASS3.10}.

\begin{lem}\label{LE4.01}
Let $\{\Gamma(t,s)\}_{t\ge s}$ be the evolution family of the non-densely defined Cauchy problem
\begin{equation}\label{4.03}
    \frac{dv(t)}{dt}=Av(t)-V(t)v(t),\quad t\in\mathbb R,
\end{equation}
then $\omega(\Gamma)<0$, where $\omega(\Gamma)$ is the exponential growth bound of $\{\Gamma(t,s)\}_{t\ge s}$. Moreover, there exist $C>0$ and
$\delta<0$ such that
\begin{equation}\label{4.04}
\|\Gamma(t+l,s+l)-\Gamma(t,s)\|\le C\eta e^{\delta(t-s)}
\end{equation}
for all $t\geq s$, $\eta>0$, and $l\in\mathcal T(V,\eta)$.
\end{lem}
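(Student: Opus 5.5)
The plan follows the proof of Lemma \ref{LE4.3}, with the semigroup $\{\mathbb T(t)\}_{t\ge0}$ generated by $d(J*\cdot-\cdot)$ on $E=L^2(\Omega)$ in place of $T_{d\Delta}$.

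\textbf{Explicit form of $\Gamma$.} Set $v(t)=(0,z(t))^T$. Then \eqref{4.03} becomes the transport equation
$(\partial_t+\partial_a)z=d(J*z-z)-\mu(t)z$ with $z(t,0,\cdot)=0$. Integrating along characteristics gives
$\Gamma(t,s)(0,z)^T=(0,\Gamma_2(t,s)z)^T$, where
$$
(\Gamma_2(t,s)z)(a)=e^{-\int_{s}^{t}\mu(r)dr}\,\mathbb T(t-s)z(a-t+s)\,\mathbf 1_{\{a\ge t-s\}}.
$$
Since the kernel contributes only through $\mathbb T(t-s)$ and $\mu$ depends only on time, the two factors decouple: $\mu(t)$ acts through a scalar exponential, and $\mathbb T$ is the autonomous semigroup. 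The operator $\mathbb T(t)$ is dominated by $e^{d(\|J\|_{L^1}-1)t}=1$. Indeed, with the Dirichlet (zero outside $\Omega$) condition, $\|J*u\|_{L^2(\Omega)}\le\|J\|_{L^1}\|u\|_{L^2}$, so $\|\mathbb T(t)\|\le e^{d(1-1)t}=1$. Combined with $\mu(t)>\epsi_0$, this gives
$$
\|\Gamma(t,s)\|\le e^{-\epsi_0(t-s)},
$$
hence $\omega(\Gamma)\le-\epsi_0<0$. The shift also vanishes once $t-s>a_+$, which only helps.

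\textbf{Translation estimate.} For $l\in\mathcal T(V,\eta)$ we have $\sup_r|\mu(r+l)-\mu(r)|\le\eta$; here I would note that $\|V(t)\|$ is exactly $\sup|\mu|$ up to the norm identification. The transport and $\mathbb T$ parts are translation invariant, so
$$
\Gamma_2(t+l,s+l)-\Gamma_2(t,s)=\Big(e^{-\int_s^t\mu(r+l)dr}-e^{-\int_s^t\mu(r)dr}\Big)\,\mathbb T(t-s)\,S(t-s),
$$
where $S$ denotes the age-shift operator, of norm at most $1$. Using $|e^{-x}-e^{-y}|\le e^{-\min(x,y)}|x-y|$ with both exponents at least $\epsi_0(t-s)$, the norm is bounded by
$$
\eta\,(t-s)\,e^{-\epsi_0(t-s)}\le \frac{2}{e\epsi_0}\,\eta\, e^{-\frac{\epsi_0}{2}(t-s)},
$$
using $xe^{-\epsi_0x/2}\le 2/(e\epsi_0)$. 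So \eqref{4.04} holds with $C=2/(e\epsi_0)$ and $\delta=-\epsi_0/2$, uniformly in $t\ge s$, and this in particular yields Assumption \ref{ASS3.10}.

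\textbf{Main obstacle.} The delicate point is rigorously justifying the characteristic formula for the non-densely defined problem, i.e.\ identifying the integrated solution generated by the resolvent \eqref{4.02} with the explicit formula. I would do this by checking that the formula satisfies the integrated-solution identity, or by invoking Theorem \ref{TH2.6}/\eqref{2.8} with $T_{A_0}$ computed from \eqref{4.02}. I would also verify the contraction bound for $\mathbb T$ on $L^2(\Omega)$ via Young's inequality, as above.
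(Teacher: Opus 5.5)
Your proposal is correct and follows the paper's proof essentially step by step. The shared steps are the characteristic formula for $\Gamma_2$, the bound $\|\mathbb T(t)\|\le 1$ combined with $\mu\ge\varepsilon_0$ to get $\|\Gamma(t,s)\|\le e^{-\varepsilon_0(t-s)}$, and the estimate $\eta(t-s)e^{-\varepsilon_0(t-s)}\le \frac{2\eta}{\varepsilon_0 e}e^{-\frac{\varepsilon_0}{2}(t-s)}$, which gives the same constants $C=\frac{2}{\varepsilon_0 e}$ and $\delta=-\frac{\varepsilon_0}{2}$. Your Young's-inequality argument for the contraction bound and your explicit mean-value bound on the difference of exponentials only spell out details the paper asserts without proof.
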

\begin{proof}
Similar to Lemma \ref{LE4.3}, by solving \eqref{4.03} along the characteristics, we have
$$ \Gamma(t,t_0)  \left( \begin{array}{l}
0\\
z
\end{array} \right)=\left( \begin{array}{c}
0 \\
\Gamma_2(t,t_0)z
\end{array} \right),\quad \forall t\ge t_0,
$$
where $\{\Gamma_2(t,s)\}_{t\ge s}$ admits the following expression
$$
(\Gamma_2(t,t_0) z)(a,x)= \left\{ {\begin{array}{*{20}{ll}}
e^{-\int_{a-t+t_0}^a \mu(t+s-a)ds}\mathbb T(t-t_0)z_0(a-t+t_0,x),&0\le t-t_0\le a,\\
0,& t-t_0>a.
\end{array}} \right.
$$
Since $\|\mathbb T(t)\|\leq1$ and
$\mu(t)\geq\epsi_0$, we obtain $\|\Gamma(t,s)\|\le e^{-\epsi_0(t-s)},\forall t\geq s$, and hence $\omega(\Gamma)\leq-\epsi_0<0$.

Let $l\in\mathcal T(V,\eta)$. Then
$$
|\mu(r+l)-\mu(r)|<\eta,\quad r\in\mathbb R.
$$
Using the above expression of $\Gamma(t,s)$, we obtain
$$
\|\Gamma(t+l,s+l)-\Gamma(t,s)\|\le\eta(t-s)e^{-\epsi_0(t-s)}.
$$
Since $(t-s)e^{-\frac{\epsi_0}{2}(t-s)}\le \frac{2}{\epsi_0 e}$ and the maximum is attained at \(t-s=\frac{2}{\epsi_0}\), we have
$$
\|\Gamma(t+l,s+l)-\Gamma(t,s)\|
\le \frac{2\eta}{\epsi_0e}
e^{-\frac{\epsi_0}{2}(t-s)}.
$$
Thus, \eqref{4.04} holds with $C=\frac{2}{\epsi_0e},\delta=-\frac{\epsi_0}{2}<0.$
\end{proof}

Therefore, the abstract results developed in Section 3 can be applied to model \eqref{4.01}. Let $X_1:=E\times\{0_{\mathcal X}\}$. Since $F(t)$ maps $X_0$ into $X_1$, by Corollary \ref{CO3.20}, the next generation operator can be restricted to $AP(\mathbb R,X_1)$. Following the steps in Section 4.1, the next generation operator $\mathscr L_*$ has the following form
$$
   \mathscr L_*\psi(t):=\int_0^{a_+} \beta(t,a)e^{-\int_0^a \mu(t+s-a)ds}\mathbb T(a)\psi(t-a)da,\quad \psi\in AP(\mathbb R,L^2(\Omega))
$$
Moreover, the basic reproduction ratio for model \eqref{4.01} is defined by 
$$
R_0=r(\mathscr L_*).
$$
By Corollary \ref{CO3.18}, we immediately obtain the following result.
\begin{thm}
Let $\{U(t,s)\}_{t\ge s}$ be the evolution family of the population model \eqref{4.01} and $\omega(\cdot)$ be the exponential growth bound, then the following statements are valid.
\begin{itemize}
\item[{\rm (i)}] If $r(\mathscr L_*)<1$, then $\omega(U)<0$ and $\lim_{t\to +\infty}\|P(t,\cdot,\cdot)\|=0$.
\item[{\rm (ii)}]  If $r(\mathscr L_*)=1$, then $\omega(U)=0$.
\item[{\rm (iii)}]  If $r(\mathscr L_*)>1$, then $\omega(U)>0$.
\end{itemize}  
\end{thm}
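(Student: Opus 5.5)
The plan is to obtain the statement as an application of Corollary \ref{CO3.18}, combined with Proposition \ref{PR3.6} and Corollary \ref{CO3.20}, following exactly the pattern of Theorem \ref{TH4.4}.

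\textbf{Step 1: hypotheses.} The reformulation of \eqref{4.01} as $\frac{du}{dt}=Au+F(t)u-V(t)u$ already satisfies Assumption \ref{ASS2.3}, as noted in the text. Lemma \ref{LE4.01} gives $\omega(\Gamma)\le-\epsi_0<0$, which is Assumption \ref{ASS2.4}. For Assumption \ref{ASS3.10}, fix $\epsilon>0$ and $\nu>0$. Take $\eta:=\epsilon/C$ with $C=\frac{2}{\epsi_0e}$ and $\delta:=-\epsi_0/2$. Then estimate \eqref{4.04} gives $\|\Gamma(t+\tau,s+\tau)-\Gamma(t,s)\|\le\epsilon e^{\delta(t-s)}$ for all $\tau\in\mathcal T(V,\eta)$. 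This holds even without the restriction $t-s\ge\nu$.

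\textbf{Step 2: identify $R_0$ with $r(\mathscr L_*)$.} Set $X_1=E\times\{0_{\mathcal X}\}$, a closed subspace, with $F(t)X_0\subset X_1$. Corollary \ref{CO3.20} then gives $R_0=r(\mathscr L)=r(\mathscr L_*)$, where $\mathscr L_*$ is the restriction of $\mathscr L$ to $AP(\mathbb R,X_1)$. This restriction must be shown to agree with the displayed integral operator. For $\phi=(m,0)^T$, Lemma \ref{LE3.9} identifies $\lim_{\mu\to\infty}\int_{-\infty}^t\Gamma(t,s)\mu(\mu I-A)^{-1}\phi(s)ds$ with the entire solution of the inhomogeneous problem. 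Here that problem is the nonlocal analogue of \eqref{4.9}, with boundary datum $z_*(t,0,\cdot)=m(t)$. Integrating along characteristics and sending $t_0\to-\infty$ gives $z_*(t,a)=e^{-\int_0^a\mu(t+s-a)ds}\mathbb T(a)m(t-a)$. Applying $F(t)$ then yields the stated formula for $\mathscr L_*$.

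\textbf{Step 3: conclude.} Corollary \ref{CO3.18} gives that $\omega(\mathbb U)$ has the sign of $r(\mathscr L_*)-1$, and Proposition \ref{PR3.6} gives $\omega(\mathbb U)=\omega(U)$. This proves (ii), (iii), and the first half of (i). For the decay claim in (i), pick $\hat\omega$ with $\omega(U)<\hat\omega<0$. Then $\|U(t,t_0)\|\le Me^{\hat\omega(t-t_0)}$. Since $(0,P(t,\cdot,\cdot))=U(t,t_0)(0,p_0)$ and $\|(0,\psi)\|_X=\|\psi\|_{\mathcal X}$, we get $\|P(t,\cdot,\cdot)\|\to0$.

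\textbf{Main obstacle.} The main difficulty is Step 2: the characteristic computation and the justification of passing $t_0\to-\infty$. To handle it I would use the bound $\|\mathbb T(a)\|\le1$ and $\mu\ge\epsi_0$. These make the initial-data term vanish uniformly in $t_0$. The resulting formula is then the unique bounded entire solution supplied by Lemma \ref{LE3.9}.
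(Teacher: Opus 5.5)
Your proposal is correct and follows the paper's route: the paper verifies Assumptions \ref{ASS2.3}, \ref{ASS2.4} and \ref{ASS3.10} via Lemma \ref{LE4.01}, restricts $\mathscr L$ to $AP(\mathbb R,X_1)$ through Corollary \ref{CO3.20} using the same characteristic computation, and then invokes Corollary \ref{CO3.18}; it leaves implicit the use of Proposition \ref{PR3.6} and the decay estimate, which you spell out. One simplification: since $a_+<\infty$, the initial-data term in your characteristic formula is identically zero once $t-t_0>a_+$, so passing to the limit $t_0\to-\infty$ needs no uniform estimate.
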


}

\subsection{An almost periodic and infection age-structured SIR model}
\indent\indent In \cite{Rebelo}, Rebelo, Margheri and Bacaër studied the time-periodic infection age-structured SIR model and defined the basic reproduction ratio $R_0$. In this subsection, we extend the definition of $R_0$ to an infection age-structured SIR model in the almost periodic situation. Thus, we consider the following epidemic model. For $t\ge t_0$ and $a\in (0,+\infty)$,
\begin{equation}\label{4.11}
\left\{ {\begin{array}{*{20}{l}}
\frac{dS(t)}{dt}=b(t)-\mu(t)S(t)-S(t)\int_0^\infty \beta(t,a) I(t,a)da, \\
{\left( {\frac{\partial }{{\partial t}} + \frac{\partial }{{\partial a}}} \right)I(t,a) = -\mu(t)I(t,a)-\gamma(t,a)I(t,a),}\\
I(t,0)=S(t)\int_0^{+\infty} \beta(t,a)I(t,a)da,\\
\frac{dR(t)}{dt}=\int_0^{+\infty}\gamma(t,a)I(t,a)da-\mu(t)R(t),
\end{array}} \right.
\end{equation}
with the initial value condition
\begin{equation}
S(t_0)=S_0,I(t_0,a)=I_0(a),R(t_0)=R_0,\quad a\in(0,\infty).
\end{equation}
In the epidemic model \eqref{4.11}, $S(t)$ and $R(t)$ denote the densities of susceptible and removed individuals at time $t$, respectively. $I(t,a)$ denotes the density of infective individuals with infection age $a$ at time $t$. In addition, $b(t)\in AP_+(\mathbb R,\mathbb R)$ is the fertility rate,  $\beta(t,a)\in AP(\mathbb R,L^\infty_+(0,+\infty))$ is the  transmission rate, $\mu(t)\in AP_+(\mathbb R,\mathbb R)$ is the mortality rate and $\gamma(t,a)\in AP(\mathbb R,L^\infty_+(0,+\infty))$ is the removal rate. We assume that there exists $\epsi_0>0$ such that $b(t)\ge \epsi_0, \mu(t)\ge \epsi_0$ and $\gamma(t,a)> \epsi_0$ for all $t\in \mathbb R$ and $a\in(0,+\infty)$.

{ By the standard theory of almost periodic differential equations \cite{Fink}, the equation
$$
\frac{dS(t)}{dt}=b(t)-\mu(t)S(t)
$$
admits a unique positive almost periodic solution
$$S_0(t)=\int_{-\infty}^tb(s)e^{-\int_s^t\mu(\xi)d\xi}ds.$$
Since $\mu(t)\ge \epsi_0>0$, for any solution $S(t)$, we have
$$
|S(t)-S_0(t)|\le|S(t_0)-S_0(t_0)|e^{-\epsi_0(t-t_0)},\quad t\ge t_0.
$$
Therefore, $S_0(t)$ is globally exponentially attractive.} This implies that the epidemic model \eqref{4.11} admits a unique almost periodic disease-free solution $(S_0(t),0,0)$.  In the following, we consider the linearization of the $I-$equation at almost periodic disease-free solution $(S_0(t),0,0)$, that is
\begin{equation}\label{4.13}
\left\{ {\begin{array}{*{20}{l}}
{\left( {\frac{\partial }{{\partial t}} + \frac{\partial }{{\partial a}}} \right)I(t,a) = -\mu(t)I(t,a)-\gamma(t,a)I(t,a),}\\
I(t,0)=S_0(t)\int_0^{+\infty} \beta(t,a)I(t,a)da.
\end{array}} \right.
\end{equation}
Let $X:=\mathbb R\times L^1(0,+\infty)$ and its closed subspace $X_0=\{0_{\mathbb R}\}\times L^1(0,+\infty)$. We define $A:D(A)\subset X\to X$ by 

\begin{equation}\label{4.14}
   A\left( \begin{array}{l}
0\\
\psi
\end{array} \right)=\left( \begin{array}{c}
-\psi(0) \\
-\frac{d\psi}{da}-\epsi_0\psi
\end{array} \right),\quad D(A):=\{0_{\mathbb R}\}\times W^{1,1}(0,+\infty).
\end{equation}
In addition, we define $F(t):X_0\to X$ and $V(t):X_0\to X$ as follows

$$
 F(t)  \left( \begin{array}{l}
0\\
\psi
\end{array} \right)=\left( \begin{array}{c}S_0(t)\int_0^{+\infty}\beta(t,a)\psi(a)da \\
0
\end{array} \right),\quad  V(t)  \left( \begin{array}{l}
0\\
\psi
\end{array} \right)=\left( \begin{array}{c}
0 \\
\mu(t)\psi(a)+\gamma(t,a)\psi(a)-\epsi_0\psi(a)
\end{array} \right).
$$
Thus, the epidemic model \eqref{4.11} can be rewritten as the following non-densely defined Cauchy problem
\begin{equation}\label{4.15}
    \frac{du(t)}{dt}=Au(t)+F(t)u(t)-V(t)u(t), \quad u(0)=u_0\in X_0
\end{equation}
It is easy to check that Assumption \ref{ASS2.3} holds on the Cauchy problem \eqref{4.15}. Moreover, following the ideas in Lemma \ref{LE4.3}, we can deduce that Assumptions \ref{ASS2.4} and \ref{ASS3.10} are satisfied. 

Let $X_1:=\mathbb R\times \{0_{L^1(0,+\infty)}\}$. We can see that $F(t)$ maps $X_0$ into $X_1$. Similar to the argument in subsection  4.1, we consider the following Cauchy problem
\begin{equation}\label{4.16}
\frac{du(t)}{dt}=Au(t)-V(t)u(t)+\phi(t), \quad u(t_0)=u_0.
\end{equation}
By letting $\phi=(m,0)\in AP(\mathbb R,X_1)$ and $u(t)=(0_{\mathbb R},z_*(t))^T$, then \eqref{4.16} is given explicitly by
\begin{equation}\label{4.17}
    \left\{ {\begin{array}{*{20}{l}}
{\left( {\frac{\partial }{{\partial t}} + \frac{\partial }{{\partial a}}} \right)z_*(t,a) =-\mu(t)z_*(t,a)-\gamma(t,a)z_*(t,a),}\quad t\ge t_0,a\in(0,+\infty)\\
z_*(t,0)=m(t),\quad t\ge t_0\\
z_*(t_0,a)=z_{*0}(a)\in L^1_+(0,+\infty),
\end{array}} \right.
\end{equation}
By solving along the characteristics again, we have
$$
z_*(t,a)= \left\{ {\begin{array}{*{20}{ll}}
e^{-\int_{a-t+t_0}^a \mu(s-a+t_0)+\gamma(s-a+t_0,s)ds}   z_{*0}(a-t+t_0),&0\le t-t_0\le a,\\
e^{-\int_0^a\mu(t+s-a)+\gamma(t+s-a,s) ds}  m(t-a),& t-t_0>a.
\end{array}} \right.
$$
Letting $t_0\to -\infty$, we have
$$
z_*(t,a)=e^{-\int_0^a\mu(t+s-a)+\gamma(t+s-a,s) ds}  m(t-a).
$$
By using Lemma \ref{LE3.9}, we obtain

$$
\begin{array}{*{20}{rl}}
\mathscr L(\phi)(t)&=F(t)\lim_{\mu\to +\infty} \int_{-\infty}^t \Gamma(t,s)\mu(\mu I-A)^{-1}\phi(s)ds \\
&=\left( \begin{array}{c}
S_0(t)\int_0^{+\infty} \beta(t,a)e^{-\int_0^a\mu(t+s-a)+\gamma(t+s-a,s) ds}  m(t-a)da\\
0
\end{array} \right).
\end{array}
$$
Recall that $F(t)$ maps $X_0$ into $X_1$, it follows from Corollary \ref{CO3.20} that the next generation operator of epidemic model \eqref{4.11} can be defined by 
$$
(\mathscr L_* \psi)(t)=S_0(t)\int_0^{+\infty} \beta(t,a)e^{-\int_0^a\mu(t+s-a)+\gamma(t+s-a,s) ds}  \psi(t-a)da,\quad \forall \psi\in AP(\mathbb R,\mathbb R).
$$
Moreover, we can define the basic reproduction ratio $R_0$ by 
$$R_0=r(\mathscr L_*).$$
\begin{remk}\label{RE4.6}
In \cite{Rebelo}, Rebelo, Margheri and Bacaër studied the time-periodic infection age-structured SIR model and defined the basic reproduction ratio $R_0$. Later, Diagne, Seydi and Sy extended the results to a two-group case \cite{Diagne}. It is easy to see that the next generation operator has the same expression in the periodic and almost periodic cases, but is defined on different Banach spaces. It is worth noting that the age-structured and diffusive equation can be transferred into a non-densely defined Cauchy problem. Therefore, by a similar argument in subsection 4.1, we can define the basic reproduction ratio $R_0$ for the almost periodic and infection age-structured epidemic models with spatial diffusion.
\end{remk}

\subsection{Almost periodic compartmental models with time delay}
\indent\indent In this subsection, we consider the following linear functional differential system
\begin{equation}\label{4.18}
\left\{ {\begin{array}{*{20}{l}}
\frac{dx(t)}{dt}=\mathcal F(t)x_t -\mathcal V(t)x(t), \quad t\ge t_0\\
x_{t_0}=\varphi\in C([-\tau,0],\mathbb R^n),
\end{array}} \right.    
\end{equation}
where $\tau>0$ is a constant denoting the maximal delay, $\mathcal F(t): C([-\tau,0],\mathbb R^n)\to \mathbb R^n$ and $\mathcal V(t):\mathbb R^n\to \mathbb R^n$ are almost periodic with respect to $t$ in the operator norm topology. Let $C:=C([-\tau,0],\mathbb R^n)$ be the Banach space of continuous functions from $[-\tau, 0]$ to $\mathbb R^n$ endowed with the supremum norm
$$
\|\varphi\|_C=\sup_{\theta\in[-\tau,0]}\|\varphi(\theta)\|_{\mathbb R^n},\quad \forall\varphi\in C.
$$

System \eqref{4.18} can be regarded as the equations of infectious variables in the linearization of an almost periodic and time-delayed compartmental epidemic model at an almost periodic disease-free solution. $\mathcal F(t)x_t$ denotes the newly infected individuals at time $t$ depend linearly on the infectious individuals over the time interval $[t-\tau, t]$. $\mathcal V(t)$ is the sum of the rate of birth, out and transfer of infected individuals. The basic reproduction ratio and the threshold dynamics of $\eqref{4.18}$ have been given in \cite{Qiang20}. Here, we use the non-densely defined operators to study this problem again and obtain the same results as in \cite{Qiang20}.

For system \eqref{4.18}, we make the following assumptions,

 \noindent \textbf{(A1)} The operator $\mathcal F(t): C([-\tau,0],\mathbb R^n)\to \mathbb R^n$ is positive, that is, $F(t)$ map $C([-\tau,0],\mathbb R^n_+)$ to $\mathbb R^n_+$.

 \noindent \textbf{(A2)} The matrix $-\mathcal V(t)$ is cooperative and $\omega(\Pi)<0$, where $\omega(\Pi)$ is the exponential growth bound of $\{\Pi(t,s)\}_{t\ge s}$ and $\{\Pi(t,s)\}_{t\ge s}$ is the evolution family of the following almost periodic differential equations
 \begin{equation}\label{4.19}
     \frac{dx(t)}{dt}=-\mathcal V(t)x(t).
 \end{equation}

Firstly, we transform the functional differential equation \eqref{4.18} into a non-densely defined Cauchy problem. Let $v\in C([t_0,\infty)\times [-\tau,0],\mathbb R^n)$
$$
v(t,\theta):=x(t+\theta),\quad \forall t\ge t_0,\theta\in[-\tau,0].
$$
Note that if $x\in C^1([t_0-\tau,+\infty),\mathbb R^n)$, we have
$$
\frac{\partial v(t,\theta)}{\partial t}=x'(t+\theta)=\frac{\partial v(t,\theta)}{\partial \theta}.
$$
Thus, we have
$$
\frac{\partial v(t,\theta)}{\partial t}-\frac{\partial v(t,\theta)}{\partial \theta}=0,\quad \forall t\ge t_0,\theta\in[-\tau,0].
$$
In addition, if $\theta=0$, we have
$$
\frac{\partial v(t,0)}{\partial \theta}=x'(t)=\mathcal F(t)x_t-\mathcal V(t)x(t)=\mathcal F(t)v(t,\cdot)-\mathcal V(t)v(t,0).
$$
Therefore, $v$ satisfies the following differential equations
\begin{equation}\label{4.20}
\left\{ {\begin{array}{*{20}{l}}
\frac{\partial}{\partial t}v(t,\theta)-\frac{\partial}{\partial \theta}v(t,\theta)=0,\\
\frac{\partial }{\partial \theta}v(t,0)=\mathcal F(t)v(t,\cdot)-\mathcal V(t)v(t,0),\\
v(t_0,\cdot)=v_0\in C.
\end{array}} \right.
\end{equation}
We introduce an extended space $X$ by 
$$
X:=\mathbb R^n\times C([-\tau,0],\mathbb R^n). 
$$
Define a linear operator $A:D(A)\subset X\to X$ by
\begin{equation}\label{4.21}
   A\left( \begin{array}{l}
0\\
\psi
\end{array} \right)=\left( \begin{array}{c}
-\psi'(0) \\
\psi'
\end{array} \right),\quad D(A):=\{0_{\mathbb R^n}\}\times C^1([-\tau,0],\mathbb R^n).
\end{equation}
Note that $A$ is non-densely defined because
$$
X_0:=\overline{D(A)}=\{0_{\mathbb R^n}\}\times C\neq X.
$$
In addition, we define $F(t):X_0\to X$ and $V(t):X_0\to X$ by
$$
 F(t)  \left( \begin{array}{l}
0\\
\psi
\end{array} \right)=\left( \begin{array}{c} \mathcal F (t)\psi \\
0
\end{array} \right),\quad  V(t)  \left( \begin{array}{l}
0\\
\psi
\end{array} \right)=\left( \begin{array}{c}
\mathcal V(t)\psi(0) \\
0
\end{array} \right).
$$
By using the above notations, \eqref{4.20} has the following form
\begin{equation}\label{4.22}
        \frac{du(t)}{dt}=Au(t)+F(t)u(t)-V(t)u(t), \quad u(t_0)=u_0\in X_0.
\end{equation}

\begin{lem}\label{LE4.7}\cite{Liu08}
Let Assumptions (A1) and (A2) be satisfied. Then $A$ is a Hille-Yosida operator. Moreover, $\rho(A)=\mathbb C\setminus \{0\}$ and  for any $\lambda>0$, we have the following explicit formula for the resolvent of $A$:

\begin{equation}\label{4.23}
     (\lambda I-A)^{-1}\left( \begin{array}{l}
f\\
\varphi
\end{array} \right)=\left( \begin{array}{l}
0\\
\psi
\end{array} \right)  
      \Leftrightarrow \psi(\theta)=e^{\lambda\theta} \frac{\varphi(0)+f}{\lambda}+\int_\theta^0 e^{\lambda(\theta-s)}\varphi(s)ds,\quad \theta\in[-\tau,0].
\end{equation}
Furthermore, the part $A_0$ of $A$ in $X_0$ generates a $C_0-$semigroup $\{T_{A_0}(t)\}_{t\ge 0}$ with the following form
$$
T_{A_0}(t)\left( \begin{array}{l}
0\\
\psi
\end{array} \right) =\left( \begin{array}{c}
0\\
\overline T_{A_0}(t)\psi
\end{array} \right) ,
$$
where $\overline T_{A_0}(t)\psi$ is given by 
$$
(\overline T_{A_0}(t)\psi)(\theta)=\left\{ {\begin{array}{*{20}{ll}}
\psi(t+\theta)&-\tau\le t+\theta\le 0,\\
\psi(0),& t+\theta>0.
\end{array}} \right.
$$
\end{lem}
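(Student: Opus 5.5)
The plan is to verify the Hille--Yosida estimate directly from the explicit resolvent formula \eqref{4.23}, and then to identify the part $A_0$ and its semigroup. First I would check that \eqref{4.23} really inverts $\lambda I-A$. Given $(f,\varphi)\in X$, the equation $(\lambda I-A)(0,\psi)=(f,\varphi)$ splits into two equations:
\begin{equation*}
\lambda\psi-\psi'=\varphi \ \text{on } [-\tau,0], \qquad \psi'(0)=f .
\end{equation*}
Solving the linear ODE backward from $\theta=0$ gives $\psi(\theta)=e^{\lambda\theta}\psi(0)+\int_\theta^0 e^{\lambda(\theta-s)}\varphi(s)\,ds$. The boundary condition $\psi'(0)=\lambda\psi(0)-\varphi(0)=f$ then yields $\psi(0)=(f+\varphi(0))/\lambda$. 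This gives existence and uniqueness for every $\lambda\neq 0$, and the same computation works for complex $\lambda\neq0$. At $\lambda=0$ the system $\psi'=-\varphi$, $\psi'(0)=f$ is not solvable unless $f=-\varphi(0)$, so $0\in\sigma(A)$. Together these give $\rho(A)=\mathbb C\setminus\{0\}$.

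Next comes the Hille--Yosida bound. A direct estimate of the $n$-th powers of the resolvent is awkward, so I would use the standard renorming trick. On $X$ take the equivalent norm
\begin{equation*}
\|(f,\varphi)\|_*=\max\{\,|f+\varphi(0)|,\ \|\varphi\|_C\,\}.
\end{equation*}
I then show $\|(\lambda I-A)^{-1}\|_*\le 1/\lambda$ for $\lambda>0$. Write $(0,\psi)=(\lambda I-A)^{-1}(f,\varphi)$. Then $|\psi(0)|=|f+\varphi(0)|/\lambda$, and for every $\theta$,
\begin{equation*}
|\psi(\theta)|\le e^{\lambda\theta}\frac{|f+\varphi(0)|}{\lambda}+\frac{1-e^{\lambda\theta}}{\lambda}\|\varphi\|_C\le \frac{1}{\lambda}\|(f,\varphi)\|_* ,
\end{equation*}
because the right-hand side is a convex combination of the two quantities entering $\|\cdot\|_*$. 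The $*$-norm of the image $(0,\psi)$ is $\max\{|\psi(0)|,\|\psi\|_C\}=\|\psi\|_C$, so the bound follows. Iterating gives $\|(\lambda I-A)^{-n}\|_*\le\lambda^{-n}$. Returning to the original norm costs only the equivalence constant $M_A$ (at most $3$), with $\omega_A=0$, so Definition \ref{DE2.2} holds. I expect this norm choice to be the main obstacle: in the original norm the iterated powers cannot be controlled uniformly by a naive estimate. As a backup I would fall back on the general results for delay operators in \cite{Liu08}.

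Finally, the part $A_0$ and its semigroup. From \eqref{2.4}, $D(A_0)=\{(0,\psi):\psi\in C^1,\ \psi'(0)=0\}$ with $A_0(0,\psi)=(0,\psi')$. This is the generator of the shift on $C$ with the right endpoint frozen. I would check directly that the stated $\overline T_{A_0}(t)$ is a strongly continuous semigroup of contractions on $C$: strong continuity comes from uniform continuity of $\psi$, and the semigroup law from composing shifts. Its Laplace transform is $\int_0^\infty e^{-\lambda t}\overline T_{A_0}(t)\psi\,dt$. Evaluating this pointwise in $\theta$ reproduces \eqref{4.23} with $f=0$. Since the Laplace transform coincides with the resolvent of $A_0$ restricted to $X_0$, the generator of this semigroup equals $A_0$. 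None of these steps uses (A1) or (A2); they are needed only for the later positivity assertions.
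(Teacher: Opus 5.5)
Your proof is correct. The paper gives no proof of this lemma; it is quoted from \cite{Liu08}, so your argument is a self-contained verification of a cited fact rather than an alternative to an argument in the text. The following parts are all sound:
\begin{itemize}
\item the resolvent computation, including $0\in\sigma(A)$ (you could also simply note that constant $\psi$ lie in the kernel of $A$);
\item the description $D(A_0)=\{(0,\psi):\psi\in C^1,\ \psi'(0)=0\}$;
\item the identification of the generator via the Laplace transform together with $(\lambda I-A_0)^{-1}=(\lambda I-A)^{-1}|_{X_0}$.
\end{itemize}
You are also right that (A1)--(A2) play no role in any of these conclusions.

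The one place where your route departs from the standard one is the Hille--Yosida bound. There the renorming is unnecessary, because the obstacle you anticipate does not exist. Use the natural product norm $\|(f,\varphi)\|_X=|f|+\|\varphi\|_C$, the analogue of the norms used in the other subsections. The same convexity observation you make then gives directly
\begin{equation*}
|\psi(\theta)|\le \frac{e^{\lambda\theta}}{\lambda}|f|+\frac{1}{\lambda}\Big(e^{\lambda\theta}|\varphi(0)|+(1-e^{\lambda\theta})\|\varphi\|_C\Big)\le\frac{|f|+\|\varphi\|_C}{\lambda}.
\end{equation*}
Hence $\|(\lambda I-A)^{-1}\|\le 1/\lambda$, and by submultiplicativity $\|(\lambda I-A)^{-n}\|\le\lambda^{-n}$ for all $n$, i.e.\ $M_A=1$ and $\omega_A=0$.

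Your equivalent norm $\|\cdot\|_*$ reaches the same conclusion, but costs you the constant $M_A\le 3$. Even for a max-type norm on $X$, the direct estimate succeeds without renorming. After the first application of the resolvent, every subsequent input has the form $(0,\psi)$, so one gets $\|(\lambda I-A)^{-n}\|\le 2\lambda^{-n}$ there as well.
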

By \eqref{4.23}, we can see that $A$ is a resolvent positive operator. In addition, based on Assumptions (A1) and (A2), Assumption \ref{ASS2.3} is satisfied. In the following, we show Assumptions \ref{ASS2.4} and \ref{ASS3.10}.

\begin{lem}\label{LE4.8}
Let Assumptions (A1) and (A2) be satisfied. Let $\{\Gamma(t,s)\}_{t\ge s}$ be the evolution family of the following non-densely defined Cauchy problem
\begin{equation}\label{4.24}
    \frac{dv(t)}{dt}=Av(t)- V(t)v(t),\quad t\in \mathbb R.
\end{equation}
Then $\omega(\Gamma)<0$. Moreover,  there exist $C>0$ and $\delta<0$ such that
\begin{equation}\label{4.25}
\|\Gamma(t+l,s+l)-\Gamma(t,s)\|
\leq C\eta e^{\delta(t-s)}
\end{equation}
for all $t\geq s$, $\eta>0$, and $l\in\mathcal T(V,\eta)$.
\end{lem}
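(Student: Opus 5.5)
The plan is to compute $\Gamma(t,s)$ explicitly, as was done in Lemmas \ref{LE4.3} and \ref{LE4.01}. Then both the decay and the translation estimate reduce to properties of the ODE evolution family $\{\Pi(t,s)\}_{t\ge s}$ of \eqref{4.19}.

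\textbf{Step 1: identify $\Gamma$.} Problem \eqref{4.24} is the abstract form of the delay equation $x'(t)=-\mathcal V(t)x(t)$, in which the delayed term is absent and $x_t$ is the state. By the correspondence between integrated solutions of \eqref{4.22} and solutions of \eqref{4.18} from \cite{Liu08}, with $\mathcal F\equiv 0$,
$$
\Gamma(t,s)\left( \begin{array}{l} 0\\ \psi \end{array} \right)=\left( \begin{array}{c} 0\\ x_t(s,\psi) \end{array} \right),
$$
where $x(r)=\psi(r-s)$ for $r\in[s-\tau,s]$ and $x(r)=\Pi(r,s)\psi(0)$ for $r\ge s$. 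Explicitly,
$$
(x_t(s,\psi))(\theta)=\left\{ \begin{array}{ll} \Pi(t+\theta,s)\psi(0), & t+\theta\ge s,\\ \psi(t+\theta-s), & t+\theta<s. \end{array} \right.
$$
To justify this, I would check that the resolvent formula \eqref{4.23} and the variation of constants formula \eqref{2.8} are consistent with this expression. Alternatively, I would just cite the standard equivalence.

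\textbf{Step 2: $\omega(\Gamma)<0$.} By (A2) there are $M\ge1$ and $\alpha>0$ with $\|\Pi(t,s)\|\le Me^{-\alpha(t-s)}$. For $t-s\ge\tau$, every $\theta$ satisfies $t+\theta\ge s$, so
$$
\|\Gamma(t,s)\|\le \sup_\theta Me^{-\alpha(t+\theta-s)}\le Me^{\alpha\tau}e^{-\alpha(t-s)}.
$$
For $0\le t-s<\tau$, the bound $\|\Gamma(t,s)\|\le M$ holds, and it is at most $Me^{\alpha\tau}e^{-\alpha(t-s)}$. Hence $\omega(\Gamma)\le-\alpha<0$.

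\textbf{Step 3: translation estimate.} Fix $l\in\mathcal T(V,\eta)$, so that $\|\mathcal V(r+l)-\mathcal V(r)\|<\eta$ for all $r$. The history parts of $\Gamma(t+l,s+l)\psi$ and $\Gamma(t,s)\psi$ coincide, because both equal $\psi(t+\theta-s)$. So only the ODE part matters:
$$
\|\Gamma(t+l,s+l)-\Gamma(t,s)\|\le \sup_{r\in[s,t]}\|\Pi(r+l,s+l)-\Pi(r,s)\|,
$$
restricted to those $r=t+\theta\ge t-\tau$. Setting $D(r)=\Pi(r+l,s+l)-\Pi(r,s)$, we get $D'=-\mathcal V(r)D-(\mathcal V(r+l)-\mathcal V(r))\Pi(r+l,s+l)$ with $D(s)=0$. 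Variation of constants then gives
$$
\|D(r)\|\le\int_s^r Me^{-\alpha(r-q)}\eta Me^{-\alpha(q-s)}dq=M^2\eta(r-s)e^{-\alpha(r-s)}.
$$
Using $(r-s)e^{-\alpha(r-s)/2}\le 2/(\alpha e)$, we obtain $\|D(r)\|\le \frac{2M^2\eta}{\alpha e}e^{-\alpha(r-s)/2}$. For $r\ge t-\tau$, this is at most $\frac{2M^2e^{\alpha\tau/2}}{\alpha e}\,\eta\, e^{-\alpha(t-s)/2}$. This yields \eqref{4.25} with $C=2M^2e^{\alpha\tau/2}/(\alpha e)$ and $\delta=-\alpha/2$.

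The main obstacle is Step 1: rigorously identifying the abstract evolution family with the delay solution map in this non-densely defined setting. Once the formula is in place, Steps 2 and 3 are elementary ODE estimates. Note that $\mathcal T(V,\eta)=\mathcal T(\mathcal V,\eta)$ up to the norm of the evaluation map $\psi\mapsto\psi(0)$, which equals $1$.
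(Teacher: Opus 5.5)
Your proposal is correct and follows essentially the same route as the paper. The paper also writes $\Gamma(t,s)$ explicitly as the history/ODE split \eqref{4.27}, justifying it by citing Rhandi's variation of constants formula rather than the equivalence from Liu. It then deduces $\omega(\Gamma)<0$ from $\|\Pi(t,s)\|\le Me^{-\alpha(t-s)}$, and bounds $\Pi(r+l,s+l)-\Pi(r,s)$ by variation of constants to obtain exactly your constants $C=\frac{2M^2}{\alpha e}e^{\alpha\tau/2}$ and $\delta=-\alpha/2$. Your observation that $\mathcal T(V,\eta)=\mathcal T(\mathcal V,\eta)$, because evaluation at $\theta=0$ has norm one, fills in a step the paper uses without comment.
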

\begin{proof}
By \cite[Theorem 3.2]{Rhandi97}, $\{\Gamma(t,s)\}_{t\ge s}$ admits the following variation of constants formula
$$
v(t)=\Gamma(t,t_0)v_0=T_{A_0}(t-t_0)v_0-\lim_{\lambda\to +\infty}\int_{t_0}^t T_{A_0}(t-s)\lambda(\lambda I-A)^{-1} V(s)\Gamma(s,t_0)v_0ds.
$$
By letting $v_0=(0,\psi)$, we have
\begin{equation}\label{4.26}
    \Gamma(t,t_0)\left( \begin{array}{l}
0\\
\psi
\end{array} \right) =\left( \begin{array}{c}
0\\
\overline \Gamma(t,t_0)\psi
\end{array} \right) ,
\end{equation}
with
\begin{equation}\label{4.27}
(\overline\Gamma(t,t_0)\psi)(\theta)=\left\{ {\begin{array}{*{20}{ll}}
\psi(t-t_0+\theta),&-\tau\le t-t_0+\theta\le 0,\\
\Pi(t+\theta,t_0)\psi(0),& t-t_0+\theta>0,
\end{array}} \right.
\end{equation}
where $\{\Pi(t,s)\}_{t\ge s}$ is the evolution family of the almost periodic differential equation
\begin{equation}\label{4.28}
    \frac{dw(t)}{dt}=- \mathcal V(t)w(t),\quad t\ge t_0.
\end{equation}
Due to Assumption (A2), we know $\omega(\Pi)<0$. Hence, there exist $M\geq1$ and $\alpha>0$ such that $\|\Pi(t,s)\|\leq Me^{-\alpha(t-s)},\quad t\geq s$. It follows from \eqref{4.27} that $\omega(\Gamma)<0$. Let $l\in\mathcal T(V,\eta)$. Then
$$
\|\mathcal V(r+l)-\mathcal V(r)\|<\eta,\quad r\in\mathbb R.
$$
By the variation-of-constants formula for \eqref{4.28},
$$
\Pi(t+l,s+l)-\Pi(t,s)=-\int_s^t\Pi(t+l,r+l)[\mathcal V(r+l)- \mathcal V(r)]\Pi(r,s)dr.
$$
Thus, we have
$$
\|\Pi(t+l,s+l)-\Pi(t,s)\|\leq M^2\eta(t-s)e^{-\alpha(t-s)}\leq
\frac{2M^2\eta}{\alpha e}e^{-\frac{\alpha}{2}(t-s)}.
$$
By \eqref{4.27}, for $\theta\in[-\tau,0]$, the difference is zero
when $t-s+\theta\leq0$, while for $t-s+\theta>0$,
$$
\|(\overline\Gamma(t+l,s+l)-\overline\Gamma(t,s))\psi(\theta)\|\leq\frac{2M^2\eta}{\alpha e}e^{-\frac{\alpha}{2}(t-s+\theta)}\|\psi\|\leq\frac{2M^2\eta}{\alpha e}e^{\frac{\alpha\tau}{2}}e^{-\frac{\alpha}{2}(t-s)}\|\psi\|.
$$
Therefore, we have
$$
\|\Gamma(t+l,s+l)-\Gamma(t,s)\|
\leq
C\eta e^{\delta(t-s)},
$$
where $C=\frac{2M^2}{\alpha e}e^{\frac{\alpha\tau}{2}},\delta=-\frac{\alpha}{2}<0.$
\end{proof}

Based on the definition of $F(t)$, we can find that $F(t)$ maps $X_0$ into $X_1$, where $X_1:={\mathbb R^n}\times \{0_C\}$. In order to use Corollary \ref{CO3.20}, we calculate the restriction of $\mathscr L$ to $AP(\mathbb R,X_1)$,
$$
   \mathscr L\left( \begin{array}{l}
f\\
0
\end{array} \right)(t)=F(t)\lim_{\lambda\to +\infty}\int_{-\infty}^t\Gamma(t,s)\lambda(\lambda I-A)^{-1}\left( \begin{array}{c}
f(s) \\
0
\end{array} \right)ds, \quad t\in\mathbb R, f\in AP(\mathbb R,\mathbb R^n).
$$
By \eqref{4.23}, we have
$$
\lambda(\lambda I-A)^{-1}\left( \begin{array}{l}
f\\
0
\end{array} \right)=\left( \begin{array}{l}
0\\
\psi
\end{array} \right) ,\quad \forall\lambda>0
$$
where
$$
\psi(\theta)=e^{\lambda\theta} f(s),\quad \theta\in[-\tau,0],\lambda>0.
$$
From \eqref{4.26}, we can obtain
$$
\int_{-\infty}^t \Gamma(t,s)\lambda(\lambda I-A)^{-1} \left( \begin{array}{c}
f(s)\\
0
\end{array} \right)ds=\left( \begin{array}{c}
0\\
\int_{-\infty}^t \overline\Gamma(t,s) e^{\lambda\theta} f(s)ds
\end{array} \right),\quad t\in \mathbb R.
$$
Therefore, the restriction of $\mathscr L$ to $AP(\mathbb R,X_1)$ admits the following form
$$
   \mathscr L\left( \begin{array}{l}
f\\
0
\end{array} \right)(t)=\left( \begin{array}{c}
\mathcal F(t) \lim_{\lambda\to +\infty}\int_{-\infty}^t \overline\Gamma(t,s) e^{\lambda\theta} f(s)\\
0
\end{array} \right)ds, \quad t\in\mathbb R, f\in AP(\mathbb R, \mathbb R^n).
$$
According to Corollary \ref{CO3.20}, the next generation operator of the non-densely defined Cauchy problem \eqref{4.22} is given by 
$$
\mathscr L_*(f)(t)=\mathcal F(t)\lim_{\lambda\to+\infty}\int_{-\infty}^t \overline\Gamma(t,s) e^{\lambda \theta} f(s)ds,\quad \forall t\in \mathbb R,f\in AP(\mathbb R, \mathbb R^n).
$$
By \eqref{4.27}, we have
$$
\begin{array}{ll}
\int_{-\infty}^t \overline \Gamma(t,s) e^{\lambda\theta}f(s)ds&=\int_{-\infty}^{t+\theta} \overline \Gamma(t,s) e^{\lambda\theta}f(s)ds+\int_{t+\theta}^t  \overline \Gamma(t,s) e^{\lambda\theta}f(s)ds\\
&=\int_{-\infty}^{t+\theta}\Pi(t+\theta,s)f(s)ds +\int_{t+\theta}^t e^{\lambda(t-s+\theta)}f(s)ds.
\end{array}
$$
In addition, we have
$$
\lim_{\lambda\to +\infty}\int_{t+\theta}^t e^{\lambda(t-s+\theta)}f(s)ds=0,\quad \forall\theta\in [-\tau,0].
$$
Thus, we can deduce 
$$
\lim_{\lambda\to +\infty}\int_{-\infty}^t \overline \Gamma(t,s) e^{\lambda\cdot}f(s)ds=\int_{-\infty}^{t+\cdot} \Pi(t+\cdot,s)f(s)ds=\int_0^{+\infty} \Pi(t+\cdot,t-s+\cdot)f(t-s+\cdot)ds
$$
Therefore, the next generation operator of the epidemic model \eqref{4.18} can be defined by
$$
\mathscr L_*(f)(t)=\mathcal F(t)\int_0^{+\infty}\Pi(t+\cdot,t-s+\cdot)f(t-s+\cdot)ds, \quad \forall f\in AP(\mathbb R,\mathbb R^n)
$$
and the basic reproduction number $R_0$ is given by 
$$
R_0=r(\mathscr L_*).
$$

\begin{remk}\label{RE4.9}
In \cite{Qiang20}, Qiang, Wang and Zhao studied the almost periodic epidemic models with time delay and gave the definition of the basic reproduction ratio $R_0$. In subsection 4.4, we reconsider this problem by using non-densely defined operators. It is easy to see that our results are the same as \cite{Qiang20}. 
\end{remk}

\section{Ethics declarations}
\subsection{Data availability}
  No data was used in this manuscript.
\subsection{Conflict of interests}
  The authors declare no potential conflict of interests.
\subsection{Ethics approval and consent to participate}
  Not applicable.

\section{Author Contributions statement}
  Jiawei Huo and Rong Yuan wrote the main manuscript text.

\renewcommand{\baselinestretch}{1.2}

\end{document}